\newtheorem{convention}{Convention}
\newtheorem{corollary}{Corollary}
\newtheorem{definition}{Definition}
\newtheorem{lemma}{Lemma}
\newtheorem{observation}{Observation}
\newtheorem{problem}{Problem}
\newtheorem{proposition}{Proposition}
\newtheorem{remark}{Remark}
\newtheorem{theorem}{Theorem}
\DeclareMathOperator{\conv}{conv}
\DeclareMathOperator{\spn}{span}
\DeclareMathOperator{\pos}{pos}
\DeclareMathOperator{\inte}{int}
\numberwithin{convention}{section}
\numberwithin{corollary}{section}
\numberwithin{definition}{section}
\numberwithin{lemma}{section}
\numberwithin{problem}{section}
\numberwithin{proposition}{section}
\numberwithin{remark}{section}
\numberwithin{theorem}{section}
\begin{document}
\title{A characterization of strongly monotypic polytopes}
\author{Vuong Bui}
\address{Vuong Bui, Institut f\"ur Informatik, Freie Universit\"{a}t
Berlin, Takustra{\ss}e~9, 14195 Berlin, Germany}
\thanks{The author is supported by the Deutsche Forschungsgemeinschaft
(DFG) Graduiertenkolleg ``Facets of Complexity'' (GRK 2434).  The work
was initiated during the time the author studied at Moscow Institute
of Physics and Technology.}
\email{bui.vuong@fu-berlin.de}

\begin{abstract}
	We characterize all the strongly monotypic polytopes.
	Hadwiger's conjecture for this class of polytopes is deduced
	from the characterization.
\end{abstract}

\maketitle

\section{Introduction}
Monotypic polytopes and strongly monotypic polytopes were first
introduced in \cite{mcmullen1974monotypic}.  Monotypic polytopes can
be seen as a subclass of simple polytopes: A polytope $P$ is monotypic
if every polytope having the same set of normals as $P$ is
combinatorially equivalent to $P$.

On the other hand, strongly monotypic polytopes is a subclass of
monotypic polytopes: A polytope $P$ is strongly monotypic if every
polytope $Q$ having the same set of normals as $P$ satisfies $\mathcal
A(P)$ and $\mathcal A(Q)$ being combinatorial equivalent, where
$\mathcal A(P)$ is the arrangement of hyperplanes containing the
facets of $P$.  An interesting property of such a polytope $P$ is
that: The intersection of any two translates of $P$ is always a
Minkowski summand of $P$.  In other words, a strongly monotypic
polytope is always a generating set.  The other direction
\emph{whether a polytope with the generating property is always a
strongly monotypic polytope} is still open.  However, it is shown to
be the case for $\mathbb R^3$ in \cite{mcmullen1974monotypic}.

Note that the property with a convex set in the place of a polytope
and any multiple number of sets instead of $2$ is actually the
definition of generating sets.  It is shown in
\cite{karasev2001characterization} that in order to check if a set is
generating, one just needs to check for every pair of its translates.
That is we can replace every pair of translates by any collection of
translates in the property of strongly monotypic polytopes.

The paper \cite{mcmullen1974monotypic} gives a partial
characterization of the strongly monotypic polytopes in $\mathbb R^3$,
which was later extended to a full characterization in
\cite{borowska2008strongly}.  In this paper, a treatment for higher
dimensions will be given.

Monotypic polytopes and strongly monotopic polytopes $P$ can be
recognized by the set of normals $N(P)$.  In
\cite{mcmullen1974monotypic}, several equivalent versions of the
necessary and sufficient conditions are given.  In particular,
Condition $M3'$ of a monotypic polytope $P$ says that: If $V_1$ and
$V_2$ are disjoint primitive subsets of $N(P)$ then $\pos V_1\cap\pos
V_2=\{0\}$.  ($V$ is a primitive subset of $N(P)$ if $V$ is linearly
independent and $\pos V\cap N(P)=V$.) Also, Condition $S4'$ for a
strongly monotypic polytope $P$ says that: If $Q$ is any polytope with
$N(Q)\subseteq N(P)$ then $Q$ is monotypic.

In order to characterize the set of normals of strongly monotypic
polytopes more conveniently, we give another equivalent condition.

\begin{theorem} \label{thm:monotypic-description}
	The following condition is necessary and sufficient for a
	polytope $P$ to be monotypic: If some $n+1$ normals of $P$ are
	in conical position, then their positive hull contains another
	normal of $P$.
\end{theorem}

In this text, a set of points is said to be separated from $0$ if
there is a hyperplane strictly separating the set from $0$ (i.e.  it
is not separated from $0$ if its convex hull contains $0$).  Also, a
set of points is said to be in \emph{conical position} if it is
separated from $0$ and none of its points is in the positive hull of
the others.

\begin{theorem} \label{thm:strongly-monotypic-description}
	An $n$-dimensional polytope $P$ is strongly monotypic if and
	only if every $n+1$ normals of $P$ are not in conical
	position.
\end{theorem}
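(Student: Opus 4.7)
The plan is to combine Theorem~\ref{thm:monotypic-description} with Condition $S4'$, which states that $P$ is strongly monotypic if and only if every polytope $Q$ with $N(Q)\subseteq N(P)$ is monotypic. With this tool in hand, both directions of Theorem~\ref{thm:strongly-monotypic-description} reduce to local statements about the finite set $N(P)\subseteq\mathbb R^n$.

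For the easier backward direction, assume no $n+1$ elements of $N(P)$ are in conical position. Then for any polytope $Q$ with $N(Q)\subseteq N(P)$, no $n+1$ elements of $N(Q)$ are in conical position either, so the hypothesis of Theorem~\ref{thm:monotypic-description} is satisfied vacuously and $Q$ is monotypic. By $S4'$, $P$ is strongly monotypic.

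For the converse I would argue contrapositively: given a set $V\subseteq N(P)$ of $n+1$ normals in conical position, I want to produce a polytope $Q$ with $N(Q)\subseteq N(P)$ that is not monotypic. The candidate is $N(Q):=N(P)\setminus U$, where $U=(N(P)\cap\pos V)\setminus V$ consists of the ``extra'' normals of $P$ lying inside $\pos V$; so $V\subseteq N(Q)$, while every normal of $P$ in $\pos V$ other than those in $V$ has been removed. Since each element of $U$ lies in $\pos V\subseteq\pos(N(Q))$, the set $N(Q)$ still positively spans $\mathbb R^n$, which lets one realize it as the normal set of an actual polytope $Q$. Then $V$ remains in conical position inside $N(Q)$ and, by design, $\pos V\cap N(Q)=V$, so the criterion of Theorem~\ref{thm:monotypic-description} fails for $Q$; hence $Q$ is not monotypic, contradicting $S4'$.

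The only step requiring separate justification is the realizability claim: a finite positively spanning set $S\subseteq\mathbb R^n\setminus\{0\}$ is always the exact normal set of some polytope, obtained by taking $\{x:\langle s,x\rangle\le h(s)\text{ for all }s\in S\}$ with generic positive offsets $h(s)$. Boundedness follows from $S$ positively spanning $\mathbb R^n$, and genericity of $h$ ensures that every half-space contributes a facet. I expect this to be the main (and only) technical obstacle, and a short argument or citation should suffice; the rest of the proof is a clean combinatorial unpacking of Theorem~\ref{thm:monotypic-description}.
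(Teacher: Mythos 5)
Your proposal is correct and follows essentially the same route as the paper: the backward direction uses that subsets of $N(P)$ inherit the no-conical-position property and then applies Theorem~\ref{thm:monotypic-description} together with $S4'$, while the forward direction removes $(\pos V\setminus V)\cap N(P)$ to produce a non-monotypic witness, exactly as the paper does. The only difference is that you make explicit the realizability of $N(P)\setminus((\pos V)\setminus V)$ as the normal set of an actual polytope (which the paper leaves implicit), and your justification of that standard fact is sound.
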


The equivalences in Theorem \ref{thm:monotypic-description} and
Theorem \ref{thm:strongly-monotypic-description} are verified in
Section \ref{sec:equivalent-characterization} by using Conditions
$M3'$ and $S4'$.

Since two strongly monotypic polytopes of the same set of normals are
combinatorially equivalent, we characterize their sets of normals
instead of the polytopes themselves.  The problem of characterizing
the set of normals of strongly monotypic polytopes can be reduced to
the following problem, due to Theorem
\ref{thm:strongly-monotypic-description}.
\begin{problem} \label{prob:main-problem}
	Characterise all the finite sets $X\subset \mathbb
	R^n\setminus\{0\}$ such that (i) $0$ is in the interior of the
	convex hull of $X$, none $x\in X$ is a positive multiple of
	another $y\in X$ and (ii) every $n+1$ points of $X$ are not in
	conical position.
\end{problem}

The requirement (i) is actually to ensure that the set $X$ is a set of
normal vectors of a polytope.  From now on, all the sets are
understood as sets of normals, that is no element is a positive
multiple of another element.  They may be normalized under some
convention, for which the set of normals of a polytope is unique (e.g.
the norm of each normal is $1$, or Convention
\ref{cov:the-convention}, which we actually use and describe later).

For the sake of Problem \ref{prob:main-problem}, we give another name
for the definition of conical position (both names are used in the
text however.)
\begin{definition}
	A set of points is said to be in a \emph{good position} if it
	is \emph{not} in conical position.  Otherwise it is said to be
	in a \emph{bad position}.
\end{definition}
A point worth noting is that if a set of points is in a good position,
then any superset of that set is also in a good position.

%

Let $X$ be the set of normals of a strongly monotypic polytope.  We
begin the study of $X$ by showing that $X$ must contain a ``skeleton''
as defined in the following theorem.
\begin{theorem} \label{thm:skeleton}
	Every $X$ satisfying the condition in Problem
	\ref{prob:main-problem} contains some $k$ disjoint subsets
	$X_1,\dots, X_k$ such that (i) each $X_i$ is the set of
	vertices of a simplex whose relative interior contains $0$,
	(ii) the linear spaces spanned by each $X_i$ are linearly
	independent and (iii) these linear spaces directly sum up to
	$\mathbb R^n$.  Such a collection of $X_1,\dots,X_k$ is said
	to be a \emph{skeleton} of $X$.
\end{theorem}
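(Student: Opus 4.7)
The plan is an induction on the ambient dimension $n$. The base case $n=1$ is immediate: condition (i) forces $X$ to contain exactly one positive and one negative vector (any two of the same sign would be positive multiples of each other), and this pair is itself the skeleton with $k=1$.

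For the inductive step, the first task is to extract a positive circuit. Take $X_1\subseteq X$ to be a minimal subset satisfying $0\in\conv X_1$. Minimality forces $X_1$ to be affinely independent: any nontrivial affine dependence on $X_1$ could be added to the convex combination witnessing $0\in\conv X_1$, and scaled until a coefficient vanishes, producing a strictly smaller such subset. Thus $X_1$ is the vertex set of a simplex with $0$ in its relative interior, of dimension $d_1=|X_1|-1$. Set $V_1=\spn X_1$. If $d_1=n$, then $X_1$ alone is a skeleton and we are done with $k=1$.

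If $d_1<n$, pass to the quotient $W=\mathbb{R}^n/V_1$ via the projection $\pi$, and let $\bar X\subset W$ be the image $\pi(X)\setminus\{0\}$ with at most one representative from each positive ray. Verify that $\bar X$ satisfies the hypotheses of the problem in $W$. Condition (i) transfers because $\pi$ is an open linear surjection and $\pi(X_1)=\{0\}$, so $0\in\inte\conv X$ yields $0\in\inte\conv\bar X$ after noting that the ``extra'' zero representative can be absorbed. Condition (ii) is checked by contrapositive: if some $\dim(W)+1$ points of $\bar X$ were in conical position in $W$, we could lift them to $X$, adjoin $d_1$ vectors drawn from $X_1$ that span $V_1$, and perturb the separating functional so that it is strictly positive on the chosen $X_1$-basis while remaining positive on the lifts; this would produce an $(n+1)$-point subset of $X$ in conical position, contradicting (ii) for $X$. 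By the inductive hypothesis, $\bar X$ contains a skeleton $\bar X_2,\ldots,\bar X_k$ of $W$.

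The last and most delicate step is to lift each $\bar X_i$ to a set $X_i\subseteq X\setminus X_1$ that is itself a positive circuit in $\mathbb{R}^n$ with $\spn X_i$ complementary to $V_1$. This is the main obstacle. Choosing preimages $y_j\in X$ of the points $\bar y_j\in\bar X_i$ with positive relation $\sum\mu_j\bar y_j=0$ produces only $\sum\mu_j y_j=v^*\in V_1$, possibly nonzero, so the lift need not be a positive circuit on its own. The resolution is to analyze $v^*$ via its expansion $v^*=\sum_{i\le d_1}\beta_i' x_i$ in any $d_1$-element basis of $X_1$ and invoke condition (ii) once more: applying (ii) to the $(n+1)$-point set obtained by joining the lift with a suitable $d_1$-subset of $X_1$, and observing that a positive circuit $\bar X_i$ has no element in the positive hull of the others in $W$, constrains the signs of the $\beta_i'$. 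Exploiting this constraint together with the freedom in choosing both the preimages (when multiple exist) and the particular positive circuit $\bar X_i$ of $W$, one shows that a coherent lift with $v^*=0$ can always be arranged; failing that, one trades $X_1$ for a different minimal positive circuit making the lift coherent.

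Once all lifts are valid, $X_1,X_2,\ldots,X_k$ is the desired skeleton: disjointness is by construction, each $X_i$ is a simplex with $0$ in its relative interior by construction, and the direct sum relation $\spn X_1\oplus\spn X_2\oplus\cdots\oplus\spn X_k=V_1\oplus W=\mathbb{R}^n$ is inherited from the inductive step since each lifted span maps isomorphically onto its counterpart in the quotient.
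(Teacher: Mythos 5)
The decisive step of your induction --- lifting the quotient skeleton back to $X$ --- is not proved, and it is exactly where the content of the theorem lies. You correctly identify that choosing preimages $y_j\in X$ of a positive circuit $\bar X_i\subset W$ only gives $\sum_j\mu_j y_j=v^*\in V_1$ with $v^*$ possibly nonzero, but the proposed remedies (``one shows that a coherent lift with $v^*=0$ can always be arranged; failing that, one trades $X_1$ for a different minimal positive circuit'') are assertions with no argument, no invariant that improves, and no termination claim. The difficulty is genuine and the advertised ``freedom in choosing the preimages'' can be empty. Take $n=3$ and $X=\{e_1,e_2,e_3,-e_3,x\}$ with $x=-e_1-e_2+e_3$. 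Then $0\in\inte\conv X$ and every four points of $X$ are in a good position (any $4$-subset containing both $e_3$ and $-e_3$ has $0$ in its convex hull; for $\{e_1,e_2,e_3,x\}$ one has $e_3=x+e_1+e_2$; for $\{e_1,e_2,-e_3,x\}$ one has $x+e_1+e_2-e_3=0$), so $X$ satisfies Problem \ref{prob:main-problem}, and a skeleton is the single full-dimensional set $\{x,e_1,e_2,-e_3\}$. If your induction happens to pick the minimal positive circuit $X_1=\{e_3,-e_3\}$, the quotient skeleton is forced to be the triangle $\{\bar e_1,\bar e_2,-\bar e_1-\bar e_2\}$, each of whose vertices has a unique preimage in $X$ (namely $e_1,e_2,x$), and the lift sums to $e_3\neq 0$. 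So no choice of preimages works, and the only escape is to change $X_1$ itself; your proof gives no criterion for choosing $X_1$, no guarantee that a coherent choice exists in general (e.g.\ when several blocks $\bar X_i$ fail simultaneously), and no measure of progress. That is the gap.

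Two smaller remarks. In transferring condition (ii) to the quotient you only discuss separation from $0$; being in conical position also requires that no point lie in the positive hull of the others, which does follow (project any putative positive-hull relation to $W$, use that the $\bar y_j$ are separated from $0$ to kill their coefficients, then use linear independence of the chosen $d_1$ points of $X_1$), but it has to be said. For comparison, the paper avoids quotients and lifting altogether: it fixes a set $B$ of $n$ linearly independent normals in conical position with maximal positive hull, shows every other normal has all-nonnegative or all-nonpositive coordinates in the basis $B$, shows that the supports of the normals lying in $\pos\{-b_1,\dots,-b_n\}$ form a laminar family, and then, using $0\in\inte\conv X$, reads off the skeleton from points of $X$ in $\pos\{-b_1,\dots,-b_n\}$ whose supports partition $B$ --- a global argument that sidesteps the coherence problem your inductive route runs into.
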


We can assume that the linear spaces spanned by the sets $X_i$ are
orthogonal (otherwise applying an appropriate linear transform to
obtain it).  We also assume that the sum of the elements in each $X_i$
is the origin.  Theorem \ref{thm:skeleton} is proved in Section
\ref{sec:skeleton}.

Once we have the skeleton, we may have to add more points to obtain
$X$.  The following problem is the instance of Problem
\ref{prob:main-problem} for $k=1$.  Instances of higher $k$ can be
reduced to the instance of $k=1$ as in Section \ref{sec:reduction}.

\begin{problem} \label{prob:k=1}
	Given the vertices $E=\{e_0,\dots,e_n\}$ of a simplex in
	$\mathbb R^n$ such that $e_0+\dots+e_n=0$ (i.e.  $0$ is in the
	interior), characterize all the sets $X\supseteq E$ such that
	every $n+1$ points of $X$ are in a good position.
\end{problem}

The solution for Problem \ref{prob:k=1} itself is given in Section
\ref{sec:k=1}.

Actually, the reason for the skeleton to be introduced so early in the
introduction is that it gives the following bound for the number of
translates in Hadwiger's conjecture \footnote{We are addressing the
combinatorial geometry one, not the graph theoretical one.}.

\begin{theorem} \label{thm:hadwiger-generating}
	If the set of normals $X$ of a strongly monotypic polytope
	$P\subset\mathbb R^n$ has a skeleton $X_1,\dots,X_k$, then $P$
	can be covered by at most $|X_1|\dots|X_k|$ translates of
	$(1-\epsilon) P$ for a small enough $\epsilon>0$.
\end{theorem}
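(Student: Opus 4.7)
The plan is to construct the covering explicitly from the skeleton. First, build an auxiliary polytope $P''$: using the support values $b_n$ of $P$ for $n \in X_1 \cup \cdots \cup X_k \subseteq N(P)$, set $P'' := \{x \in \mathbb R^n : \langle n, x \rangle \le b_n \text{ for every } n \in X_1 \cup \cdots \cup X_k\}$. Because the $L_i$ are orthogonal and each $X_i$ spans $L_i$ with $0$ in the relative interior of $\conv(X_i)$, the polytope $P''$ factorizes as a Cartesian product $P_1 \times \cdots \times P_k$ in which each $P_i \subset L_i$ is a simplex with $|X_i|$ vertices. Hence $V(P'')$ has exactly $|X_1| \cdots |X_k|$ elements and $P \subseteq P''$.

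Now use the vertices of $P''$ as translation centers: for each $w \in V(P'')$ consider the translate $(1-\epsilon)P + \epsilon w$, yielding $|X_1| \cdots |X_k|$ translates of $(1-\epsilon)P$.  I would then show these cover $P$ for small enough $\epsilon > 0$. By the standard reduction to vertices and a passage to the limit $\epsilon \to 0^+$, it is enough to verify that for every vertex $u$ of $P$ there exists some $w \in V(P'')$ with $u - w$ lying in the tangent cone of $P$ at $u$; equivalently, $\langle w, n \rangle \ge b_n$ for every $n \in N_P(u)$.

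The vertex $w$ is constructed coordinatewise. For each $i$, let $F_i$ be the face of the simplex $P_i$ containing the projection $\pi_i u$ in its relative interior, cut out by the tight facets of $P_i$ with normals in $N_P(u) \cap X_i$. Pick $w^{(i)}$ to be any vertex of $P_i$ on the closure of $F_i$ and set $w = w^{(1)} + \cdots + w^{(k)}$. By orthogonality, this choice makes $\langle w, n \rangle = b_n$ automatic for every $n \in N_P(u) \cap (X_1 \cup \cdots \cup X_k)$. The remaining requirement concerns the \emph{extra} normals $n \in N_P(u) \setminus (X_1 \cup \cdots \cup X_k)$: one must coordinate the choices of $w^{(i)}$ so that $\langle w, n \rangle \ge b_n$ holds for every such $n$, i.e., every extra facet of $P$ incident to $u$ cuts $w$ off from $P$. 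This coordination is the main obstacle, and I would address it through Theorem \ref{thm:strongly-monotypic-description}: the condition that no $n+1$ normals of $P$ lie in conical position should force the extra facets incident to $u$ to consistently cut off a single vertex of $P''$, so that a simultaneous choice of the $w^{(i)}$ can be made to witness every inequality at once.
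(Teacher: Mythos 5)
Your framework is sound and is in fact the dual formulation of the paper's own argument: the vertices of your product polytope $P''=P_1\times\dots\times P_k$ are in bijection with the $|X_1|\cdots|X_k|$ cones $\pos\bigl(\bigcup_i X_i\setminus\{x_i\}\bigr)$, and once one knows that $N_P(u)$ is contained in the normal cone of $P''$ at a single vertex $w$, your inequality $\langle w,n\rangle\ge b_n$ for all $n\in N_P(u)$ follows automatically from $h_{P''}(n)=\langle w,n\rangle\ge h_P(n)$. The genuine gap is exactly the step you defer to the last sentence: the claim that the extra normals at a vertex $u$ ``consistently cut off a single vertex of $P''$'', i.e.\ that $N_P(u)\subseteq\pos\bigl(\bigcup_i X_i\setminus\{x_i\}\bigr)$ for one simultaneous choice of $x_i\in X_i$. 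This is the entire content of the theorem, and it does not follow in any direct way from Theorem \ref{thm:strongly-monotypic-description} alone (the paper itself remarks that the conical-position condition is only local and is ``perhaps not so easy'' to use directly). The paper proves this containment by a case analysis on the set $Y$ of the $n$ primitive normals at a vertex, using the structural consequences of the characterization: Proposition \ref{prop:in-pos-of-2} (an off-space normal lies in $\pos\{x_i,x_j\}$), Propositions \ref{prop:same-region-same-point} and \ref{prop:restrict-support} (which force the coordination between different $X_i$'s and between off-space and in-space normals), Propositions \ref{prop:all-equal-n-positive}, \ref{prop:almost-homo}, \ref{prop:intersection}, \ref{prop:support-insertecting-one-point->strange-index} and \ref{prop:2-strange-indices} (supports and strange indices of in-space normals), in each case exhibiting either $0\in\conv Y$ or a normal in $\pos Y$, contradicting primitivity. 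Also needed, and absent from your sketch, is the observation that a vertex of a strongly monotypic polytope lies on exactly $n$ facets (otherwise $n+1$ normals at a vertex would be in conical position with empty positive hull), which is what makes this case analysis tractable. Without an argument of this kind your proposal proves only the easy half.

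A smaller point: because $P\subseteq P''$, any $w\in V(P'')$ satisfies $\langle w,n\rangle=b_n$ (equality, not strict inequality) for the skeleton normals active at $u$, so you obtain only weak illumination of the vertices and cannot quote the illumination criterion verbatim; this is fixable either by a compactness/slack argument over the finitely many facets, or, as in the paper, by choosing for each cone a direction $v$ with $\langle a_i,v\rangle<0$ strictly for its $n$ generators and invoking Lemma \ref{lem:inside-positive-hull} — but it should be addressed rather than folded into ``passage to the limit''.
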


The proof of Theorem \ref{thm:hadwiger-generating} given in Section
\ref{sec:hadwiger-generating} does not use all the characterization of
$X$ but only a handful of propositions.  It is perhaps not so easy to
deduce Theorem \ref{thm:hadwiger-generating} directly from Theorem
\ref{thm:strongly-monotypic-description} since the later only provides
a local picture of the normals while the former asks for a more global
picture.

Hadwiger's conjecture for this class of polytopes follows as
$|X_1|\dots|X_k|\le 2^n$.

The method of the proof in Section \ref{sec:hadwiger-generating}
relies on the property that we can tell which normals having their
facets intersecting at a vertex, for which we can employ the analysis
using the characterization. The method perhaps also works for other
classes of polytopes, the matter is that we have to deal with the
undeterministic nature of which normals having facets intersecting at
a vertex.

As for the state of Hadwiger's conjecture, it has been solved fully
only for convex bodies in $\mathbb R^2$ in \cite{levi1955uberdeckung}
by Levi.  Higher dimensions including $\mathbb R^3$ are still open
with only a few specific classes of convex bodies settled.

\section{Proof for the equivalence of the characterizations}
\label{sec:equivalent-characterization}
Let $D$ denote the condition in Theorem
\ref{thm:monotypic-description}.  In order to prove Theorem
\ref{thm:monotypic-description}, we will show that the two conditions
$M3'$ and $D$ are equivalent by verifying both directions.  Let us
fist remind Condition $M3'$ for a polytope $P$ to be monotypic, which
is actually already given in the introduction: If $V_1$ and $V_2$ are
disjoint primitive subsets of $N(P)$ then $\pos V_1\cap\pos
V_2=\{0\}$.

\begin{proposition} \label{prop:M3'->D}
	$M3'$ implies $D$.
\end{proposition}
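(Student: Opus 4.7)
The plan is to argue by contradiction: assume $M3'$ and suppose $D$ fails, so that $P$ has $n+1$ normals $v_0,\dots,v_n$ in conical position whose positive hull $C=\pos\{v_0,\dots,v_n\}$ contains no further normal of $P$. The goal is to extract two disjoint primitive subsets $V_1,V_2\subseteq N(P)$ whose positive hulls meet nontrivially, contradicting $M3'$.

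Since $n+1$ vectors in $\mathbb R^n$ are linearly dependent, I would first pick a minimal linearly dependent subset $\{v_i\}_{i\in I}\subseteq\{v_0,\dots,v_n\}$; it admits an essentially unique relation $\sum_{i\in I}\lambda_iv_i=0$ with every $\lambda_i\ne 0$. Because conical position supplies a vector $w$ strictly separating the $v_i$'s from $0$, i.e.\ with $\langle v_i,w\rangle>0$ for all $i$, the identity $\sum_i\lambda_i\langle v_i,w\rangle=0$ forces both signs to occur among the $\lambda_i$. I would then partition $I=I^+\sqcup I^-$ accordingly and set $V_1=\{v_i:i\in I^+\}$ and $V_2=\{v_i:i\in I^-\}$. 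These two sets are disjoint; each is a proper subset of a minimal dependent set and therefore linearly independent; and the common vector $u=\sum_{i\in I^+}\lambda_iv_i=\sum_{i\in I^-}(-\lambda_i)v_i$ is a nonzero (else $V_1$ would be dependent) element of $\pos V_1\cap\pos V_2$.

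The step I expect to require the most care is the primitivity check $\pos V_j\cap N(P)=V_j$ for $j=1,2$. I plan to establish it by combining both hypotheses. The supposed failure of $D$ gives $\pos V_j\cap N(P)\subseteq C\cap N(P)=\{v_0,\dots,v_n\}$, while the ``none in the positive hull of the others'' clause of conical position ensures that for any $V_j\subsetneq\{v_0,\dots,v_n\}$ no further $v_k$ can lie in $\pos V_j$: indeed $V_j\subseteq\{v_i:i\ne k\}$ forces $\pos V_j\subseteq\pos\{v_i:i\ne k\}$, which by assumption excludes $v_k$. Both $V_1$ and $V_2$ are proper subsets of $\{v_0,\dots,v_n\}$ since $I^+$ and $I^-$ are nonempty, so in each case the intersection collapses to $V_j$ itself; together with the linear independence already noted this makes $V_1$ and $V_2$ primitive, and the nonzero common vector $u$ then contradicts $M3'$, finishing the argument.
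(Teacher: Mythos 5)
Your proof is correct, and it is in spirit the same argument as the paper's, but executed differently at the key step. The paper radially projects the $n+1$ normals onto a hyperplane strictly separating them from $0$ and then simply invokes Radon's theorem to split them into two disjoint subsets with intersecting hulls, leaving the primitivity of the resulting sets implicit. You instead produce the splitting by hand inside the cone: you pass to a minimal linearly dependent subset, use the separating functional to force both signs in the (essentially unique) dependence, and take the two sign classes. This buys you two things the bare appeal to Radon does not: the parts are automatically linearly independent (proper subsets of a circuit), whereas a Radon partition of points in degenerate position need not have affinely independent parts, and you carry out explicitly the verification that $\pos V_j\cap N(P)=V_j$, combining the failure of $D$ (no extra normal in the big cone) with the ``no point in the positive hull of the others'' clause of conical position. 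So your write-up is a self-contained and slightly more careful version of the paper's Radon argument; the paper's version is shorter because it outsources the combinatorial splitting to Radon's theorem, at the cost of glossing over exactly the primitivity details you checked.
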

\begin{proof}
	Suppose we do not have $D$, which means there exist $n+1$
	normals in conical position with the positive hull not
	containing any other normal.  Let $H$ be a hyperplane not
	through $0$ such that for each $a$ of the $n+1$ normals, ray
	$\overrightarrow{0a}$ cuts the hyperplane at only a point.
	Replacing every normal $a$ by the intersection of ray
	$\overrightarrow{0a}$ and the hyperplane, applying Radon's
	theorem to this set on the affine space, there will be two
	disjoint subsets whose intersection is nonempty.  It means the
	corresponding sets in the linear space contradict $M3'$.  The
	conclusion follows.
\end{proof}

To show that $D$ implies $M3'$, we need the following lemmas.

\begin{lemma} \label{lem:ray_as_intersection}
	Given two sets of normals $V_1, V_2$.  If $\pos V_1$ and $\pos
	V_2$ intersect at a point other than $0$, then there are
	$V'_1\subseteq V_1, V'_2\subseteq V_2$ such that $(\pos
	V'_1\cap\pos V'_2)\setminus\{0\}$ is precisely a ray in the
	relative interior of both the positive hulls.  Moreover, the
	union $V'_1\cup V'_2$ is separated from $0$.
\end{lemma}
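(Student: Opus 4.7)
The plan is to take $(V'_1, V'_2)$ to be a pair of subsets $V'_i \subseteq V_i$ minimising $|V'_1| + |V'_2|$ among all pairs satisfying $\pos V'_1 \cap \pos V'_2 \ne \{0\}$, and then to verify the three conclusions of the lemma for this minimal pair. The recurring device is a perturbation argument: whenever a suitable relation is available, I perturb a representation of some $v$ in the intersection until a coefficient drops to zero, producing a pair with strictly smaller total cardinality and contradicting minimality.

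First I would check that every non-zero $v \in \pos V'_1 \cap \pos V'_2$ lies in the relative interior of $\pos V'_1$ (and of $\pos V'_2$): if a representation $v = \sum_{a \in V'_1} \alpha_a a$ had some $\alpha_{a_0} = 0$, dropping $a_0$ from $V'_1$ would preserve $v \in \pos V'_1$ and contradict minimality. From this I deduce that $V'_1$ and $V'_2$ are linearly independent, because a relation $\sum c_a a = 0$ would allow the shift $\alpha_a \mapsto \alpha_a + t c_a$ until some coefficient vanishes, giving a forbidden zero-coefficient representation.

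Next I show that $\pos V'_1 \cap \pos V'_2$ is exactly one ray through the origin. If there were two non-zero, linearly independent points $v, w$ in the intersection, then both would admit strictly positive representations in $V'_1$ and in $V'_2$. Considering $v - tw$ for $t > 0$, let $t^* > 0$ be the smallest value at which a coefficient (in either representation) becomes zero; then $v - t^* w$ is non-zero (otherwise $v$ and $w$ are collinear) and lies in the intersection while using a proper subset of $V'_1$ or $V'_2$, contradicting minimality.

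Finally, to see that $V'_1 \cup V'_2$ is separated from $0$, assume a convex combination $0 = \sum_{a \in V'_1} p_a a + \sum_{b \in V'_2} q_b b$. Linear independence of each $V'_i$ forces neither the $p_a$ nor the $q_b$ to be all zero, so $u := \sum p_a a = -\sum q_b b$ is non-zero. Writing $v = \sum \alpha_a a = \sum \beta_b b$ and shifting by $-tu$ gives $v - tu = \sum(\alpha_a - t p_a)a = \sum(\beta_b + t q_b)b$; at the smallest $t^* > 0$ for which some $\alpha_a - t^* p_a = 0$, we obtain a point of the intersection that uses a proper subset of $V'_1$ but is still non-zero (its $V'_2$-representation has only strictly positive coefficients), again contradicting minimality. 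I expect the main obstacle to be this last step, where one must carefully verify both that the support strictly decreases on the $V'_1$ side and that the resulting point stays non-zero; the earlier steps are more routine applications of the same perturbation idea.
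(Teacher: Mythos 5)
Your proposal is correct and follows essentially the same route as the paper: a minimal pair $V'_1,V'_2$, strictly positive coefficients forced by minimality, the single ray obtained by subtracting a multiple of one intersection point from another until a coefficient vanishes, and separation from $0$ by perturbing the ray's representation along the vanishing convex combination. The only differences are cosmetic (cardinality- rather than inclusion-minimality, and an explicit perturbation proof of linear independence where the paper cites Carath\'eodory), and your extra care that the perturbed point is nonzero via the strictly positive $V'_2$-coefficients actually makes the last step slightly tighter than the paper's.
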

\begin{proof}
	Let $U_1\subseteq V_1$ and $U_2\subseteq V_2$ be minimal
	subsets so that $\pos U_1\cap \pos U_2\ne \{0\}$.

	We show that $(\pos U_1\cap \pos U_2)\setminus\{0\}$ is a ray
	in the relative interior of both the positive hulls.  Suppose
	there are two rays $\overrightarrow{0p},\overrightarrow{0q}$
	in the intersection. 

	Let
	\[
		p=\sum_{x_i\in U_1} \lambda_i x_i=\sum_{x_i\in U_2}
		\lambda_i x_i,
	\]
	and
	\[
		q=\sum_{x_i\in U_1} \theta_i x_i=\sum_{x_i\in U_2}
		\theta_i x_i.
	\]
	(Note that there is no confusion of $\lambda_i$ for $x_i\in
	U_1$ and $x_i\in U_2$ since $U_1$ and $U_2$ are disjoint.  The
	same is for $\theta_i$.)

	Due to the minimality of $U_1, U_2$, all the coefficients
	$\lambda_i, \theta_i$ are positive, i.e.  the points $p,q$ are
	in the relative interior of $\pos U_1$ and $\pos U_2$.  (We
	can even conclude that the points in each of $U_1,U_2$ are
	linearly independent by Carath\'eodory's theorem for positive
	hulls.)

	Consider the point
	\[
		p-\alpha q = \sum_{x_i\in U_1}
		(\lambda_i-\alpha\theta_i) x_i = \sum_{x_i\in U_2}
		(\lambda_i-\alpha\theta_i) x_i,
	\]
	where $\alpha=\min\{ \min\{\lambda_i/\theta_i: x_i\in
	U_1\}, \min\{\lambda_i/\theta_i: x_i\in U_2\} \}$.
	The choice of $\alpha$ ensures that the coefficients
	$\lambda_i-\alpha\theta_i$ for $x_i\in U_1$ and
	$\lambda_i-\alpha\theta_i$ for $x_i\in U_2$ are all
	nonnegative with at least one zero.  Note that $p-\alpha q$ is
	not zero as $p,q$ are two different normals.  This contradicts
	with the minimality of $U_1, U_2$.

	We continue to prove that $U_1\cup U_2$ is separated from $0$.
	Suppose $0\in\conv(U_1\cup U_2)$, that is
	\[
		-\sum_{x_i\in U_1}\lambda_i x_i = \sum_{x_i\in
		U_2}\lambda_i x_i
	\]
	for some nonnegative coefficients $\lambda_i$.

	Since $(\pos U_1\cap\pos U_2)\setminus\{0\}$ is a ray in the
	relative interior of both positive hulls,
	\[
		\sum_{x_i\in U_1} \theta_i x_i = \sum_{x_i\in U_2}
		\theta_i x_i
	\]
	for some positive coefficients $\theta_i$.

	Let $\alpha=\max\{\lambda_i/\theta_i: x_i\in U_1\}$, the
	equation
	\[
		\sum_{x_i\in U_1} (-\lambda_i + \alpha\theta_i) x_i =
		\sum_{x_i\in U_2} (\lambda_i + \alpha\theta_i) x_i,
	\]
	has all nonnegative coefficients on both sides with at least a
	zero coefficient on the left.  It means that the positive hull
	of a proper subset of $U_1$ intersects $\pos U_2$ at a point
	other than $0$, contradicting the minimality of $U_1, U_2$.

	The sets $U_1,U_2$ confirm the conclusion.
\end{proof}

\begin{lemma} \label{lem:subspace_causes_contradiction}
	The followings are equivalent for a point set $X$ that spans
	$\mathbb R^n$:

	(i) There exist $n+1$ points of $X$ in conical position with
	the positive hull empty of other points of $X$.

	(ii) For some $d\le n$, there exist $d+1$ points of $X$ that
	span an $d$-dimensional space and in conical position with the
	positive hull empty of other points of $X$.
\end{lemma}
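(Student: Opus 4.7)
The plan is to prove the two directions of the equivalence separately. The direction (i) $\Rightarrow$ (ii) is the easier one: given $n+1$ points in conical position spanning some $d$-dimensional subspace $W$ with empty positive hull, I would select $d+1$ of them that still span $W$ and verify the subset inherits both properties. Each selected point is an extreme ray of the original positive hull and hence extreme in the sub-cone, so it is not in the positive hull of the other selected points; unselected points from the original $n+1$ are extreme rays of the big cone and therefore cannot lie in any sub-cone of it, while $X$-points outside the original $n+1$ are by hypothesis not in the big cone, hence not in the sub-cone.

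For the harder direction (ii) $\Rightarrow$ (i), I would induct on $n-d$, with the inductive step adding one point $w\in X\setminus W$ at a time to extend the configuration by one linear dimension. Let $\pi\colon\mathbb R^n\to W^\perp$ denote the orthogonal projection. For any $w\notin W$ the extended set $\{v_0,\dots,v_d,w\}$ is automatically in conical position (separation from $0$ is clear because $-w\notin W$ while $\pos(v_0,\dots,v_d)\subset W$, and no member lies in the positive hull of the others, as follows by projecting to $W^\perp$). The delicate part is keeping the new positive hull empty of other $X$-points. A projection analysis shows that the only candidate troublemakers are those $u\notin W$ with $\pi(u)=\beta\pi(w)$ for some $\beta>0$ and $u-\beta w\in\pos(v_0,\dots,v_d)$; the case $u\in W$ reduces to the old empty-hull hypothesis via $\beta=0$.

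The choice of $w$ is then made by a sink-of-a-DAG argument. Fix a direction $r\in W^\perp$ such that some $X$-point projects onto $\mathbb R_{>0}r$, list the $X$-points with this projection as $u_1,\dots,u_k$ with $\pi(u_l)=t_l r$, $t_l>0$, and form the directed graph $G_r$ on these vertices with an edge $i\to j$ iff $u_j-(t_j/t_i)u_i\in\pos(v_0,\dots,v_d)$; this edge condition is precisely what would force $u_j$ into the new positive hull when $w=u_i$. The main obstacle is to show $G_r$ is acyclic: for any cycle $i_1\to\cdots\to i_m\to i_1$ the edge vectors $p_l:=u_{i_{l+1}}-(t_{i_{l+1}}/t_{i_l})u_{i_l}\in\pos(v_0,\dots,v_d)$ are nonzero (because $X$, being a set of normals, has no positive multiples), and the positive weights $\mu_l$ determined by $\mu_l/\mu_{l-1}=t_{i_l}/t_{i_{l+1}}$ are consistent around the cycle (the cyclic product telescopes to $1$) and satisfy $\sum_l\mu_l p_l=0$, contradicting pointedness of $\pos(v_0,\dots,v_d)$. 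Since every finite DAG has a sink, setting $w=u_{i^*}$ for any sink $u_{i^*}$ completes the inductive step, and iterating $n-d$ times produces $n+1$ points in conical position with empty positive hull, i.e.\ (i).
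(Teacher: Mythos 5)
Your proof is correct, and its overall skeleton is the same as the paper's: both directions are handled the same way at the structural level, with (ii)$\Rightarrow$(i) done by adding one point per dimension while preserving conical position (which, as you note, is automatic) and emptiness of the positive hull (the only delicate part). Where you genuinely differ is in how the extension point $w$ is chosen. The paper picks an arbitrary $p\in X\setminus\spn X'$ and, whenever $\pos(X'\cup\{p\})$ contains another point $q$ of $X$, simply replaces $p$ by $q$, arguing termination because the positive hull loses at least one $X$-point per replacement; note that any such $q$ automatically has $\pi(q)\in\mathbb R_{>0}\,\pi(p)$, so the paper's replacement chain walks exactly along the edges of your graph $G_r$, and its termination-by-counting plays the role of your acyclicity. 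Your route makes the mechanism fully explicit: you isolate the precise obstruction set $\{u: \pi(u)\in\mathbb R_{>0}\,r,\ u-\beta w\in\pos V\}$ and prove acyclicity by the cycle-weight cancellation $\sum_l\mu_l p_l=0$ against pointedness of $\pos V$ (pointedness following from separation from $0$), whereas the paper's shorter argument leaves the analogous point implicit (its ``fewer points after each step'' also ultimately needs pointedness plus the standing convention that $X$ contains no positive multiples of a point, the same convention you invoke to rule out $p_l=0$). Two small touch-ups for your write-up: define $G_r$ only on pairs $i\ne j$ (otherwise $0\in\pos V$ gives every vertex a self-loop and no sink exists, and your ``$p_l\ne 0$'' argument does not apply to a loop), and note explicitly that the direction $r$ exists because $X$ spans $\mathbb R^n$ while $d<n$. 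In exchange for being longer, your version gives a cleaner certificate of why the greedy choice exists; the paper's version is more economical.
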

\begin{proof}
	The direction (i)$\implies$(ii) is trivial, just take some
	$(\dim \spn X) + 1$ points among them that span a $\dim\spn
	X$-dimensional space.

	We show the other direction, that is (ii)$\implies$(i).

	At first, we start with the given $d+1$ points $X'$ of
	dimension $d$, and increase their dimension one by one, by
	adding one point in each step, until we have $n+1$ points. If
	the current number of points is less than $n+1$, the dimension
	is therefore less than $n$, which implies the existence of
	another point $p$ not in the span of $X'$. If the positive
	hull of $X'\cup\{p\}$ is not empty of other points of $X$, we
	replace $p$ by any point of $X$ in $\pos(X'\cup\{p\})\setminus
	(X'\cup\{p\})$, and recursively repeat it until the positive
	hull is empty of other points of $X$. The process will
	eventually terminate as the positive hull contains fewer
	points after each step. So, for any $X'$, we can increase the
	dimension by one, by adding one point $p$ but still keeping
	the positive hull empty of other points in $X$. Therefore, we
	can come up with $n+1$ points satisfying the hypothesis, thus
	achieving (i).
\end{proof}
\begin{remark}
	If we have (ii), we also have the same (ii) for every higher
	$d$, including $n$, which is itself a stronger statement than
	(i).
\end{remark}

Now comes the verification of $D$ implies $M3'$.
\begin{proposition} \label{prop:D->M3'}
	$D$ implies $M3'$.
\end{proposition}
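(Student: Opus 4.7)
I plan to argue by contrapositive: assuming $M3'$ fails, I produce $n+1$ normals of $P$ in conical position whose positive hull contains no other normal, contradicting $D$.

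Among disjoint primitive pairs $(V_1,V_2)$ with $\pos V_1\cap\pos V_2\ne\{0\}$, choose one minimising $|N(P)\cap\pos(V_1\cup V_2)|$. Apply Lemma~\ref{lem:ray_as_intersection} to produce $V'_i\subseteq V_i$ whose positive hulls meet in a single ray in the relative interior of each, with $V'_1\cup V'_2$ separated from $0$. Primitivity passes to $V'_i$ (an element of $N(P)\cap\pos V'_i$ lies in $\pos V_i\cap N(P)=V_i$, and linear independence of $V_i$ forces it into $V'_i$), and $|V'_i|\ge 2$ (a singleton on one side would land in the other primitive set by inheritance, contradicting disjointness). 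If $V'_i\subsetneq V_i$ for some $i$, then $(V'_1,V'_2)$ would be a primitive violation with $\pos(V'_1\cup V'_2)\subseteq\pos(V_1\cup V_2)$, contradicting extremality; so $V_i=V'_i$ and the ray property holds already.

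Next I verify that $Z:=V_1\cup V_2$ is in conical position and minimally linearly dependent. The ray supplies a linear relation $\sum_{V_1}\alpha_v v=\sum_{V_2}\beta_v v$ with strictly positive coefficients, and I expect it to be the only relation up to scaling (a second independent one could be combined with it to produce a second non-proportional ray in $\pos V_1\cap\pos V_2$, contradicting Lemma~\ref{lem:ray_as_intersection}). Hence $\spn Z$ has dimension $m=|V_1|+|V_2|-1$ and the projection to a separating hyperplane carries a unique affine dependence whose signs are $+$ on $V_1$ and $-$ on $V_2$. Because $|V_i|\ge 2$, the only way for some element of $Z$ to lie in the convex hull of the others would require an affine dependence isolating that element with the opposite sign, which is ruled out.

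I then claim extremality forces $\pos Z\cap N(P)=Z$. Suppose instead some $y\in N(P)\cap\pos Z\setminus Z$ exists, and write $y=\sum_{V_1}a_v v+\sum_{V_2}b_v v$. If $y$ is a positive multiple of the intersection ray $r$, then primitivity of both $V_1$ and $V_2$ gives $y\in V_1\cap V_2=\emptyset$, a contradiction. Otherwise, the family $y=\sum(a_v+\mu\alpha_v)v+\sum(b_v-\mu\beta_v)v$ indexed by admissible $\mu$ admits a vertex $v^*\in Z$ whose coefficient remains strictly positive for every admissible $\mu$, using the non-constancy of the coefficient ratios on at least one side. Replacing $v^*$ by $y$ gives $Y':=(Z\setminus\{v^*\})\cup\{y\}$, still of cardinality $m+1$ and spanning the same $m$-dimensional subspace. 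The positivity of $v^*$'s coefficient blocks $y\in\pos(Z\setminus\{v^*\})$, and no other $w\in Z\setminus\{v^*\}$ can lie in $\pos(Y'\setminus\{w\})$ without $y$ being a positive multiple of $w$, so $Y'$ is again in conical position. Re-deriving its Radon partition yields a new primitive pair $(W_1,W_2)$ with $W_1\cup W_2=Y'$ whose positive hulls intersect, and $|N(P)\cap\pos(W_1\cup W_2)|<|N(P)\cap\pos(V_1\cup V_2)|$ because $v^*\in\pos Z\setminus\pos Y'$; this contradicts extremality.

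Finally, apply Lemma~\ref{lem:subspace_causes_contradiction} in direction (ii)$\implies$(i) with $d=m$ to inflate $Z$ to $n+1$ normals of $N(P)$ in conical position whose positive hull contains no other normal, violating $D$. The principal obstacle is the third paragraph: selecting $v^*$ carefully enough to exclude the degenerate case and verifying that the swap produces a genuine primitive pair with strictly smaller positive-hull count, while preserving the conical-position and minimal-dependence structure.
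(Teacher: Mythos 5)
Your overall strategy is the same as the paper's (extremal choice of a violating primitive pair, Lemma~\ref{lem:ray_as_intersection} to get the ray structure, a descent upon finding an extra normal in $\pos(V_1\cup V_2)$, and Lemma~\ref{lem:subspace_causes_contradiction} to inflate to $n+1$ normals), but the descent step in your third paragraph has a genuine gap, and it is exactly the one you flag. After the exchange $Y'=(Z\setminus\{v^*\})\cup\{y\}$ you take the sign classes $(W_1,W_2)$ of the unique dependence of $Y'$ and declare them a ``new primitive pair.'' Nothing in the construction gives primitivity: $W_1,W_2$ are linearly independent and their positive hulls meet, but $\pos W_1$ or $\pos W_2$ may well contain further normals of $N(P)$, since $y$ was an arbitrary extra normal and the $W_i$ mix elements of $V_1$ and $V_2$. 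Without primitivity the pair is not admissible for your extremal comparison (which ranges over primitive pairs), so the strict decrease of $|N(P)\cap\pos(W_1\cup W_2)|$ proves nothing; and you cannot simply widen the extremal class to all disjoint pairs, because primitivity of $V_1,V_2$ is what you used to get $|V_i|\ge 2$, to dispose of the case $y$ on the ray, and to guarantee the existence of $v^*$ (not all coefficients on one side can vanish at an endpoint of the admissible interval). A smaller slip of the same kind occurs in your second paragraph: $\pos(V'_1\cup V'_2)\subseteq\pos(V_1\cup V_2)$ only gives a non-strict inequality of counts, so it does not ``contradict extremality''; you should instead just replace the pair by $(V'_1,V'_2)$, which is again a minimiser.

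The paper closes precisely this hole by a different descent. It takes the minimality with respect to inclusion of $\pos(V_1\cup V_2)$, projects everything to a transversal hyperplane, and among the extra normals chooses $p$ at \emph{minimal distance} to $\conv U_1$; this nearest-point choice forces $\conv(\{p\}\cup U_1)$ to be empty of other normals, which is what makes the replacement set primitive. Then, since $U_1$ is the vertex set of a simplex with $0$ in its relative interior, projecting $p$ onto $\operatorname{aff} U_1$ yields a proper subset $U'_1\subset U_1$ with $0\in\conv(\{p'\}\cup U'_1)$, so $\conv(\{p\}\cup U'_1)$ still meets $\conv U_2$ while a vertex of $U_1$ has been dropped, strictly shrinking the positive hull. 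If you want to rescue your exchange argument, you need an analogous mechanism (e.g.\ choosing $y$ extremal in a suitable sense) that forces the new sides to contain no other normals; as written, the step ``re-deriving its Radon partition yields a new primitive pair'' is an unproved claim on which the whole contradiction rests.
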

\begin{proof}
	Suppose we do not have $M3'$, which means there are two
	disjoint primitive subsets $V_1, V_2$ of $N(P)$ such that
	$(\pos V_1 \cap \pos V_2)\setminus\{0\}$ is nonempty.  Over
	all such pairs $V_1,V_2$, we consider a pair so that
	$\pos(V_1\cup V_2)$ is minimal (up to inclusion).  Due to
	Lemma \ref{lem:ray_as_intersection}, we have $(\pos
	V_1\cap\pos V_2)\setminus\{0\}$ is a precisely a ray in the
	relative interior of both $\pos V_1, \pos V_2$.  Moreover,
	$V_1\cup V_2$ is separated from $0$.

	Consider a hyperplane that cuts every ray
	$\overrightarrow{0p}$ for each $p\in V_1\cup V_2$ at exactly a
	point $p'$.  Let $U_1,U_2$ be the corresponding sets of those
	points $p'$.  Although $U_1,U_2$ are contained in an affine
	space, it is more convenient to see the hyperplane as a linear
	space by taking $\conv U_1\cap \conv U_2$ as the origin.
	Moreover, we assume the spaces spanned by $U_1$ and $U_2$ are
	orthogonal (otherwise, we transform the space).  Let $U$ be
	the intersection of all the normals in $\pos (V_1\cup V_2)$
	with the hyperplane.  Note that $\conv U_1$ and $\conv U_2$
	are empty of other points in $U$ (as $V_1,V_2$ are primitive).

	Suppose we have $D$, that is $U\setminus (U_1\cup U_2)$ is
	nonempty.

	Let $p$ be a point in $U\setminus (U_1\cup U_2)$ that has the
	smallest distance to $\conv U_1$.  It follows that
	$\conv(\{p\}\cup U_1)$ is empty of other points in $U$ since
	otherwise another point would be closer to $\conv U_1$ than
	$p$.  Also note that the points $\{p\}\cup U_1$ are in convex
	position since $p$ is not in the affine space spanned by
	$U_1$.

	Let $p'$ be the projection of $p$ onto the affine space of
	$U_1$.  Since $U_1$ is the vertices of a simplex containing
	$0$, there is a proper subset $U'_1\subset U_1$ so that
	$0\in\conv(\{p'\}\cup U'_1)$.  We then have $\conv(\{p\}\cup
	U'_1)$ and $\conv U_2$ intersecting.  Their corresponding
	normals are the two sets satisfying the condition of $V_1,V_2$
	but having a smaller positive hull of the union,
	contradiction.

	It follows that we do not have $D$.
\end{proof}

We now have verified Theorem \ref{thm:monotypic-description}.
\begin{proof}[Proof of Theorem \ref{thm:monotypic-description}]
	Theorem \ref{thm:monotypic-description} follows from
	Propositions \ref{prop:M3'->D} and \ref{prop:D->M3'}.
\end{proof}

Let $DD$ denote the condition in Theorem
\ref{thm:strongly-monotypic-description}, we show the equivalence of
the conditions $S4'$ and $DD$ in order to prove Theorem
\ref{thm:strongly-monotypic-description}.  Let us remind Condition
$S4'$ for a polytope $P$ to be strongly monotypic from the
introduction: If $Q$ is any polytope with $N(Q)\subseteq N(P)$ then
$Q$ is monotypic.

\begin{proof}[Proof of Theorem
\ref{thm:strongly-monotypic-description}]
	In one direction, if a monotypic polytope $P$ is also strongly
	monotypic, it should not have $n+1$ normals in conical
	position (i.e.  Condition $DD$).  Indeed, suppose $V$ is the
	set of such $n+1$ normals, we consider the subset of $N(P)$
	after removing every normal in the positive hull of $V$ except
	the normals in $V$ themselves, which is $N(P)\setminus((\pos
	V)\setminus V)$.  Any polytope taking this subset as the set
	of normals is not monotypic, due to the existence of the $n+1$
	normals of $V$ in conical position with the positive hull not
	containing any other normal.

	In the other direction, if a polytope $P$ has Condition $DD$,
	then every polytope $Q$ whose set of normals is a subset of
	the set of normals of $P$ should be monotypic.  It is rather
	clear since Condition $DD$ means the hypothesis in Condition
	$D$ does not happen.
\end{proof}

\section{The main technique in proofs}
\label{sec:main-technique}
Before proving the results in the following sections, we present the
technique that will be used throughout the text.  In each proof, in
order to show a bad position, we will show a set of $n+1$ points
$p_0,\dots,p_n$ so that some $n$ points of them are linearly
independent but all $n+1$ points are linearly dependent by a unique
(up to scaling) relation $\sum_i \lambda_i p_i=0$ with $2$ positive
coefficients and $2$ negative coefficients.  This set of points is
clearly in a bad position.

Usually, some $n$ points of them are shown to be linearly independent
by Lemma \ref{lem:base-for-linear-independence} as follows.

\begin{lemma} \label{lem:base-for-linear-independence}
	Given $E=\{e_0,\dots,e_n\}$ being the vertices of a simplex
	whose interior contains $0$.  For a point $p$ in the space
	spanned by $E$, the minimal subset of $E$ whose positive hull
	contains $p$ is called the \emph{support} of $p$.  Let $S_p$
	denote the support of $p$, we have every $p$ is not in the
	linear span of $E\setminus\{e_a,e_b\}$ for any $e_a\in S_p$
	and any $e_b\in E\setminus S_p$.
\end{lemma}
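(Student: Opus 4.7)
The plan is to exploit the fact that since $0$ lies in the interior of the simplex $\conv E$, the $n+1$ vectors $e_0,\dots,e_n$ span $\mathbb R^n$ and admit a (unique up to a scalar) linear dependence $\sum_{i=0}^{n}\mu_i e_i=0$ in which every coefficient $\mu_i$ is strictly positive. Existence and positivity of the $\mu_i$ come from writing $0$ as a strict convex combination of the $e_i$; uniqueness comes from the fact that the kernel of the map $\mathbb R^{n+1}\to\mathbb R^n$ sending coordinates to linear combinations of the $e_i$ is one-dimensional. In particular, any $n$ of the $e_i$'s form a basis of $\mathbb R^n$, and any linear relation $\sum\gamma_i e_i=0$ with at least one vanishing coefficient must be the trivial relation.

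Using the minimality in the definition of $S_p$, write $p=\sum_{e_i\in S_p}\alpha_i e_i$ with $\alpha_i>0$ for every $e_i\in S_p$. For the contradiction, assume that $p\in\spn(E\setminus\{e_a,e_b\})$ for some $e_a\in S_p$ and $e_b\in E\setminus S_p$, which yields a second representation $p=\sum_{i\notin\{a,b\}}\beta_i e_i$. Subtract these two expressions to get a linear relation $\sum_{i=0}^{n}\gamma_i e_i=0$ on $E$.

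The key observation is to inspect the coefficients of $e_a$ and of $e_b$ in this relation. At index $a$ the first expression contributes $\alpha_a$ and the second contributes $0$, so $\gamma_a=\alpha_a>0$. At index $b$ the first expression contributes $0$ (because $e_b\notin S_p$) and the second contributes $0$ (because the sum omits $b$), so $\gamma_b=0$. By the uniqueness from the first paragraph, $(\gamma_i)=c(\mu_i)$ for some $c\in\mathbb R$; but $\gamma_b=0$ with $\mu_b>0$ forces $c=0$, which then gives $\alpha_a=\gamma_a=0$, contradicting $e_a\in S_p$. There is no real obstacle here: the entire argument hinges on spotting the vanishing of $\gamma_b$, after which the uniqueness of the dependence among $E$ finishes the proof immediately.
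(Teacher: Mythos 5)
Your proof is correct and follows essentially the same route as the paper: subtract the two representations of $p$ to get a linear relation on $E$ in which $e_b$ has coefficient $0$ and $e_a$ has coefficient $\alpha_a>0$, then conclude a contradiction. The only difference is cosmetic: the paper directly invokes the linear independence of the $n$ points $E\setminus\{e_b\}$, while you justify that fact via the unique, strictly positive dependence among the $e_i$ — a slightly more self-contained version of the same argument.
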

\begin{proof}
	Let $p=\sum_{e_i\in S_p} \lambda_i e_i$.

	Suppose $p$ is in the positive hull of
	$E\setminus\{e_a,e_b\}$, we also denote $p=\sum_{e_i\in
	E\setminus\{e_a,e_b\}}\theta_i e_i$.

	Taking the difference between the two representations,
	\[
		\Bigl(\sum_{e_i\in S_p\setminus\{e_a\}} (\lambda_i -
		\theta_i) e_i\Bigr) + \lambda_a e_a - \sum_{e_i\in
		(E\setminus S_p)\setminus\{e_b\}} \theta_i e_i = 0.
	\]

	It follows that $0$ is a linear combination of
	$|E\setminus\{e_b\}| = n$ points in $E$ with at least one
	nonzero coefficient $\lambda_a$.  This is a contradiction as
	the $n$ points are linearly independent.  The conclusion
	follows.
\end{proof}

Lemma \ref{lem:base-for-linear-independence} will be implicitly used
to show that some $n$ points are linearly independent in the way that:
One of them is not in the span of the remaining points, which are
themselves linearly independent.

\section{Proof of Theorem \ref{thm:skeleton}}
\label{sec:skeleton}
Among the elements of $X$ take a set of points $B$ such that $B$ is in
conical position, $B$ spans an $n$-dimensional space and its positive
hull is maximal in the sense that no other such points have the
positive hull being a superset of the positive hull of $B$.  Since
every $n+1$ points of $X$ are in a good position, it follows that $B$
is a set of $n$ linearly independent points $b_1,\dots,b_n$.

Consider any other point $x=\sum_{i=1}^n \lambda_i b_i$ in $X$.  The
$n+1$ points $\{x,b_1,\dots,b_n\}$ either (i) have the convex hull
containing $0$, for which all $\lambda_i$ are nonpositive or (ii) have
one of them in the positive hull of the others, for which all
$\lambda_i$ are nonnegative.  The other possibility that the
coefficients $\lambda_i$ are of mixed signs is impossible as follows.
First, we then have precisely one positive coefficient, since if there
are at least two positive coefficients, together with at least one
negative coefficients, the signs of coefficients indicates that
$\{x,b_1,\dots,b_n\}$ are in conical position, contradiction.  As
there are now only one positive coefficient, say $\lambda_i$, and
other nonpositive coefficients, we have
$b_i\in\pos\{x,b_1,\dots,b_n\}\setminus\{b_i\}$.  It is again a
contradiction since the $n$ points
$\{x,b_1,\dots,b_n\}\setminus\{b_i\}$ has a bigger positive hull than
the positive hull of $\{b_1,\dots,b_n\}$ (the former contains $x$
while the latter does not).  So, it is always either (i) or (ii).

Let the \emph{Cartesian support} \footnote{The leading ``Catersian''
is to distinguish with the other definition of support as in
Definition \ref{def:support}, which dominates the rest of the text.}
of a point $x$ be the set $\{b_i: \lambda_i\ne 0\}$ for
$x=\sum_{i=1}^n \lambda_i b_i$.

We show that the Cartesian support of two points both in
$\pos\{b_1,\dots,b_n\}$ or both in $\pos\{-b_1,\dots,-b_n\}$ are
either disjoint or one is a subset of the other.  Let the two points
be $x,y$ with $x=\sum_{i=1}^n \lambda_i b_i$ and $y=\sum_{i=1}^n
\theta_i b_i$.  Suppose the Cartesian support of $x$ be
$\{b_1,\dots,b_{k_2}\}$ and the Cartesian support of $y$ be
$\{b_{k_1},\dots,b_{k_3}\}$ with $1<k_1\le k_2<k_3$.  Since the points
in $\{y,b_1,\dots,b_n\}\setminus\{b_{k_1}\}$ are linearly independent,
the following linear relation
\[
	x=\Bigl(\sum_{i=1}^{k_1-1} \lambda_i b_i\Bigr) +
	\frac{\lambda_{k_1}}{\theta_{k_1}} y + \sum_{i=k_1+1}^{k_2}
	(\lambda_i - \frac{\lambda_{k_1}}{\theta_{k_1}} \theta_i) b_i
	- \sum_{i=k_2+1}^{k_3} \frac{\lambda_{k_1}}{\theta_{k_1}}
	\theta_i b_i
\]
is unique (upto scaling).  It raises a contradiction as the $n+1$
points $\{x,y\}\cup\{b_1,\dots,b_n\}\setminus\{b_{k_1}\}$ are in a bad
position with one positive coefficient of $x$ on the left hand side,
two positive and one negative coefficients of $b_1, y, b_{k_2+1}$ on
the right hand side.  (The readers can check Section
\ref{sec:main-technique} for the method.)

Since $0\in\inte\conv X$, there must be some $k$ points
$x_1,\dots,x_k$ all in $\pos\{-b_1,\dots,-b_n\}$ and their Cartesian
supports are disjoint while the union of the Cartesian supports is
$\{b_1,\dots,b_n\}$.  Let $X_i$ for each $i=1,\dots,k$ be the union of
the Cartesian support of $x_i$ and the point $x_i$ itself, we obtain
the desired sets $X_1,\dots,X_k$, which complete the proof.

\section{Reduction of Problem \ref{prob:main-problem} to Problem
\ref{prob:k=1}}
\label{sec:reduction}
Problem \ref{prob:main-problem} can be approached as follows.  We
first start with a skeleton $X_1,\dots,X_k$ of $X$ and then try to
constrain all the additional points in $X\setminus (X_1\cup\dots\cup
X_k)$.  The following propositions give the necessary condition for
the additional points.  Their proofs are rather not so hard and will
be given later in Section \ref{sec:proofs-of-propositions}.  Theorem
\ref{thm:reduction} reduces Problem \ref{prob:main-problem} to Problem
\ref{prob:k=1} when the remaining work is to characterize the points
in the spaces spanned by certain $X_i$.

We begin with a definition.
\begin{definition} \label{def:support}
	Given a skeleton $X_1,\dots,X_k$.  If $p_1,\dots,p_k$ is the
	projection of $p$ onto the linear spaces spanned by
	$X_1,\dots,X_k$ and $X'_i\subset X_i$ is the minimal subset of
	$X_i$ such that $p_i$ is in the positive hull of $X'_i$, then
	$S=X'_1\cup\dots\cup X'_k$ is called the \emph{support} of
	$p$.  We denote by $S_p$ the support of $p$.
\end{definition}
One can see that $|S_p|$ is at least $2$ except for $p\in
X_1\cup\dots\cup X_k$, and $|S_p|$ cannot exceed $n$.  Note that this
definition is compatible with the definition of support in Lemma
\ref{lem:base-for-linear-independence}, which is the instance for
$k=1$.

The following propositions are the necessary conditions for additional
points in $X$.

\begin{proposition} \label{prop:in-pos-of-2}
	If $x\in X$ is not in the space spanned by any $X_i$ then
	$x\in\pos\{x_i,x_j\}$ for some $x_i\in X_i, x_j\in X_j$.
\end{proposition}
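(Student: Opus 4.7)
The plan is to prove the contrapositive: assuming $x\in X$ lies outside every $\spn X_i$ but cannot be written as $\alpha y+\beta z$ with $y\in X_i$, $z\in X_j$ for some $i\ne j$ and $\alpha,\beta\ge 0$, I will exhibit $n+1$ points of $X$ in conical position, contradicting the hypothesis of Problem~\ref{prob:main-problem}. Decompose $x=\sum_i x^{(i)}$ by orthogonal projection onto each $\spn X_i$, and let $X'_i\subseteq X_i$ be the support of $x^{(i)}$, so $x^{(i)}=\sum_{y\in X'_i}\alpha_{i,y}\,y$ with all $\alpha_{i,y}>0$. The hypothesis that $x$ lies outside every $\spn X_i$ forces at least two $X'_i$ to be nonempty, and the desired form corresponds to exactly two $X'_i$ nonempty, each a singleton; the negation therefore splits into two cases: (I)~exactly two nonempty supports, one of size at least $2$, and (II)~three or more nonempty supports. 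Throughout I use $|X'_i|\le d_i:=\dim\spn X_i$, which follows from $\sum_{y\in X_i}y=0$ and ensures $X_i\setminus X'_i\ne\emptyset$.

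The core step is to choose a basis $V=\bigsqcup_i V_i$ of $\mathbb R^n$ with $V_i=X_i\setminus\{e_i\}$ such that the coordinates of $x$ in $V$ have at least two positive and at least one negative entries. Rewriting $x^{(i)}$ via $\sum_{y\in X_i}y=0$: if $e_i\notin X'_i$, then the coordinates of $x^{(i)}$ on $V_i$ are simply $\alpha_{i,y}>0$ on $X'_i$ and $0$ elsewhere; if instead $e_i\in X'_i$ is chosen to maximise $\alpha_{i,e_i}$, the coordinates are nonpositive on $X'_i\setminus\{e_i\}$ and equal to the strictly negative value $-\alpha_{i,e_i}$ on the nonempty set $X_i\setminus X'_i$. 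In Case~(I), relabelling so $|X'_1|\ge 2$, I set $e_1\notin X'_1$ and $e_2\in X'_2$ (the maximiser), producing $|X'_1|\ge 2$ positive and at least one negative coordinate. In Case~(II), relabelling so $X'_1,X'_2,X'_3$ are all nonempty, I set $e_1\in X'_1$ (the maximiser) and $e_j\notin X'_j$ for $j=2,3$, producing at least one negative (from $X_1\setminus X'_1$) and at least $|X'_2|+|X'_3|\ge 2$ positive coordinates; any remaining $e_i$ may be chosen arbitrarily.

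With such a basis, $\{x\}\cup V$ is an $(n+1)$-element subset of $X$ whose unique (up to scaling) linear dependence is $x-\sum_{v\in V}c_v v=0$, where $c_v$ denotes the $v$-coordinate of $x$ in $V$. Counting signs, this dependence has $1+|\{v:c_v<0\}|\ge 2$ positive and $|\{v:c_v>0\}|\ge 2$ negative coefficients, so by the argument of Section~\ref{sec:main-technique}, $\{x\}\cup V$ is in conical position, contradicting the assumption that every $n+1$ points of $X$ are in good position. The main subtlety is the choice in Case~(I): rather than ``flipping'' the larger support $X'_1$ (which would create coordinates $\alpha_{1,y}-\alpha_{1,e_1}$ of uncontrolled sign), one flips the smaller support $X'_2$ and leaves $X'_1$ untouched, which makes the sign count robust regardless of the relative magnitudes inside $X'_1$ and $X'_2$.
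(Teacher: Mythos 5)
Your proof is correct and uses essentially the same approach as the paper: both arguments exhibit $x$ together with a basis of $\mathbb R^n$ obtained by dropping one vertex from each $X_i$, chosen so that the unique linear dependence among these $n+1$ points of $X$ has at least two positive and two negative coefficients, hence a bad position contradicting the hypothesis. The paper organizes the choice via a partition $L\sqcup M$ (a basis from $X_L$ whose cone contains the projection, a basis from $X_M$ whose cone misses it) plus symmetry, while your blockwise supports and the maximiser trick are just a more explicit bookkeeping of the same sign-controlled basis selection.
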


\begin{proposition} \label{prop:disjoint-or-equal}
	If each of $x,y\in X$ is not in the space spanned by any $X_i$
	with $x\in\pos\{x_i,x_j\}$, $y\in\pos\{y_{i'}, y_{j'}\}$ for
	some $x_i\in X_i, x_j\in X_j$ and $y_{i'}\in X_{i'}, y_{j'}\in
	X_{j'}$, then the sets $\{i, j\}$ and $\{i', j'\}$ are either
	equal or disjoint.
\end{proposition}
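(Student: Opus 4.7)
The plan is to argue by contradiction: I would assume $\{i,j\}$ and $\{i',j'\}$ share exactly one index and exhibit $n+1$ points of $X$ in bad position, contradicting the hypothesis of Problem~\ref{prob:main-problem}. By relabeling I may suppose $i=i'$ and $j\ne j'$, so $i,j,j'$ are three distinct skeleton indices (hence $k\ge 3$ and $d_i:=\dim\spn X_i\le n-2$). Writing $x=\alpha x_i+\beta x_j$ and $y=\gamma y_i+\delta y_{j'}$ with $\alpha,\beta,\gamma,\delta>0$, I would build the bad-position set following the main technique of Section~\ref{sec:main-technique}.

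The starting point is the pair of identities $\gamma x-\alpha y=\alpha\gamma(x_i-y_i)+\beta\gamma\, x_j-\alpha\delta\, y_{j'}$ and $\gamma x+\alpha y=\alpha\gamma(x_i+y_i)+\beta\gamma\, x_j+\alpha\delta\, y_{j'}$, obtained directly from the defining equations for $x$ and $y$. I would split into two subcases. If $x_i=y_i$, the first identity collapses to $\gamma x-\alpha y-\beta\gamma\, x_j+\alpha\delta\, y_{j'}=0$, a relation on $\{x,y,x_j,y_{j'}\}$ with two positive and two negative coefficients. If $x_i\ne y_i$, I would use $\sum_{z\in X_i}z=0$ to rewrite $x_i+y_i=-\sum_{z\in X_i\setminus\{x_i,y_i\}}z$ in the second identity, yielding
\[
\gamma x+\alpha y+\alpha\gamma\sum_{z\in X_i\setminus\{x_i,y_i\}}z-\beta\gamma\, x_j-\alpha\delta\, y_{j'}=0,
\]
a relation with $d_i+1\ge 2$ positive and exactly $2$ negative coefficients. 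In both subcases the relation has at least two positive and at least two negative coefficients.

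To upgrade this to an $(n+1)$-point subset of $X$, I would extend the nonzero support by adjoining skeleton vectors with zero coefficient, arranged so that removing $x$ leaves a basis of $\mathbb R^n$. In the first subcase this requires $n-3$ extra vectors, taken as $d_l$ elements from each $X_l$ with $l\notin\{i,j,j'\}$, plus $d_i-1$ elements of $X_i\setminus\{x_i\}$, $d_j-1$ of $X_j\setminus\{x_j\}$, and $d_{j'}-1$ of $X_{j'}\setminus\{y_{j'}\}$. In the second subcase I would instead add $n-d_i-2\ge 0$ vectors in the analogous manner, noting that all of $X_i\setminus\{x_i\}$ already sits among the nonzero-coefficient support. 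In both cases $\sum_l d_l=n$ gives the correct count, and the fact that any $d_l$ vertices of the simplex $X_l$ are linearly independent confirms that $n$ of the $n+1$ chosen points are linearly independent, so the relation above is unique up to scaling.

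The point I expect to need most care is the final bad-position verification: any $(n+1)$-point set whose unique-up-to-scale linear relation has at least two positive and at least two negative nonzero coefficients is automatically in conical position. Separation from $0$ is immediate because a scalar multiple of a mixed-sign relation can never be all nonnegative. The fact that no point lies in the positive hull of the others is the more delicate bit: any such expression $p=\sum_{q\ne p}\mu_q q$ with $\mu_q\ge 0$ would yield a relation with exactly one positive coefficient, which by uniqueness must be a scalar multiple of the constructed relation, contradicting its sign pattern.
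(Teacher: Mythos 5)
Your proposal is correct and takes essentially the same route as the paper's own proof: the same case split on whether the shared skeleton vertex coincides ($x_i=y_i$ versus $x_i\ne y_i$), the same two linear relations (the paper's identities are yours up to rescaling, with its index $2$ playing the role of your $i$), the same padding by skeleton vertices to reach $n+1$ points, and your closing argument that a unique-up-to-scale relation with at least two coefficients of each sign forces conical position is exactly the Section~\ref{sec:main-technique} technique, stated in the slightly generalized form that both you and the paper actually need. (One harmless slip: in the second subcase the nonzero support contains $X_i\setminus\{x_i,y_i\}$ rather than all of $X_i\setminus\{x_i\}$, but your count of $n-d_i-2$ added vectors and the independence of the $n$ points after removing $x$ are unaffected, since the projection of $y$ supplies the direction of $y_i$.)
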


\begin{proposition} \label{prop:same-region-same-point}
	Given two points $x,y\in X$ not in the space spanned by any
	$X_i$ with $x\in\pos\{x_i,x_j\}$, $y\in\pos\{y_i, y_j\}$ for
	some $x_i,y_i\in X_i$, $x_j,y_j\in X_j$.  If $|X_i|>2$ then
	$x_i=y_i$.
\end{proposition}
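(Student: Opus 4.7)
My plan is to argue by contradiction using the main technique of Section \ref{sec:main-technique}. Assuming $x_i \ne y_i$, I will exhibit $n+1$ points of $X$ whose unique linear dependence carries at least two positive and two negative coefficients, thereby placing them in conical position and contradicting the hypothesis of Problem \ref{prob:main-problem}. Writing $x = \alpha x_i + \beta x_j$ and $y = \gamma y_i + \delta y_j$ with $\alpha,\beta,\gamma,\delta > 0$, the hypothesis $|X_i|>2$ enters precisely to allow the choice of a vertex $z_i \in X_i \setminus \{x_i, y_i\}$.

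The candidate set is
\[
S = \{x, y\} \cup (X_i \setminus \{z_i\}) \cup (X_j \setminus T_j) \cup \bigcup_{l \ne i, j} (X_l \setminus \{a_l\}),
\]
where $a_l$ is any fixed vertex of $X_l$, and $T_j \subset X_j$ is a two-element subset containing $x_j$: take $T_j = \{x_j, y_j\}$ when $x_j \ne y_j$, and otherwise $T_j = \{x_j, w\}$ for any $w \in X_j \setminus \{x_j\}$. A direct count gives $|S| = n+1$, and one verifies $\spn S = \mathbb R^n$, so the space of linear dependences among $S$ is one-dimensional. To pin down this unique dependence I project it onto each $\spn X_l$ separately. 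Since the only linear relation among the full vertex set $X_l$ is $\sum_{v \in X_l} v = 0$, the coefficients assigned to all vertices of $X_l$ in the projected relation must be equal; the missing vertex $z_i$ forces this common value on $X_i$ to be $0$, and the choice of $T_j$ imposes an analogous constraint on $\spn X_j$. A short calculation then produces
\[
\delta x + \beta y - \alpha\delta\, x_i - \beta\gamma\, y_i + \beta\delta \sum_{z \in X_j \setminus T_j} z = 0
\]
when $x_j \ne y_j$, and
\[
\delta x - \beta y - \alpha\delta\, x_i + \beta\gamma\, y_i = 0
\]
(with all other coefficients zero) when $x_j = y_j$. Each relation has two positive and two negative coefficients among its nonzero entries, so $S$ lies in conical position by the discussion in Section \ref{sec:main-technique}.

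The main obstacle is the sub-case $x_j = y_j$: the natural attempt $T_j = \{x_j, y_j\}$ collapses to a single element, and more naive variants of $S$ lead to a relation with only one negative coefficient. Removing a second vertex $w$ of $X_j$ is what forces $\lambda_x \beta + \lambda_y \delta = 0$ in the $\spn X_j$-projection, which in turn makes the coefficients on $x$ and $y$ opposite in sign and yields the second pair of mixed signs. Dropping the hypothesis $|X_i|>2$ leaves no available $z_i$, the $\spn X_i$ common value is no longer pinned to zero, and the second negative coefficient is lost---consistent with the proposition asserting $x_i = y_i$ only under $|X_i|>2$.
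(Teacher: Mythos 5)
Your proof is correct and follows essentially the same route as the paper: the same set of $n+1$ points (both $x_i,y_i$ kept in $X_i$, two vertices removed from $X_j$, one from every other $X_l$), the same case split on $x_j\ne y_j$ versus $x_j=y_j$, and the same explicit dependences (the paper's relations are yours divided by $\beta\delta$), concluded via the Section \ref{sec:main-technique} technique. The only cosmetic difference is that you derive the unique dependence by projecting onto each $\spn X_l$, which the paper leaves implicit; note also that in the case $x_j\ne y_j$ the relation has two negative but possibly more than two positive coefficients, which is harmless and matches the paper's own usage.
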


\begin{proposition} \label{prop:restrict-support}
	Suppose $x\in X$ is not in the space spanned by any $X_i$ and
	$x\in\pos\{x_i,x_j\}$ for some $x_i\in X_i, x_j\in X_j$.  If
	there is any other point $y\in X$ in the space spanned by
	$X_i$ than those points in $X_i$, then the support of $y$
	cannot contain $x_i$.
\end{proposition}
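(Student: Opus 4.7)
The plan is to argue by contradiction: assume $x_i\in S_y$ and exhibit $n+1$ points of $X$ in conical position. Write $X_i=\{e_0,\dots,e_{d_i}\}$ with $e_0=x_i$ and $e_0+\dots+e_{d_i}=0$, let $I\subseteq\{0,\dots,d_i\}$ be the index set for which $S_y=\{e_l:l\in I\}$ (so $0\in I$ by assumption), and write $y=\sum_{l\in I}\mu_l e_l$ with all $\mu_l>0$ and $x=\alpha e_0+\beta x_j$ with $\alpha,\beta>0$. A support is a linearly independent subset of $\spn X_i$, which has dimension $d_i$, so $|I|\le d_i$; hence $X_i\setminus S_y\ne\emptyset$. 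Since $y\notin X_i$ and no element of $X$ is a positive multiple of another, we also have $|I|\ge 2$.

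Cancelling the $e_0$-component between $\alpha y$ and $\mu_0 x$ yields
\[
\alpha y-\mu_0 x-\alpha\sum_{l\in I\setminus\{0\}}\mu_l e_l+\mu_0\beta x_j=0,
\]
a linear dependence among the $|I|+2$ points $y,x,x_j$ and $\{e_l:l\in I\setminus\{0\}\}$ of $X$, with two positive coefficients ($\alpha$ on $y$ and $\mu_0\beta$ on $x_j$) and $|I|\ge 2$ negative coefficients. To enlarge these to a set of exactly $n+1$ points of $X$ whose unique linear dependence is the one above, I adjoin: a fixed $e_c\in X_i\setminus S_y$ together with the $d_i-|I|$ points of $X_i\setminus(S_y\cup\{e_c\})$; a fixed $f_c\in X_j\setminus\{x_j\}$ together with the $d_j-1$ points of $X_j\setminus\{x_j,f_c\}$; and for every $l\ne i,j$, a fixed $v_l\in X_l$ together with the $d_l$ points of $X_l\setminus\{v_l\}$. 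The total count is $(|I|+2)+(d_i-|I|)+(d_j-1)+\sum_{l\ne i,j}d_l=n+1$.

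A dimension count based on the orthogonality of the $\spn X_l$'s, and on the observation that any proper subset of $X_l$ is linearly independent (since the only relation inside $X_l$ is $\sum_{e\in X_l}e=0$), shows that the $|I|+2$ essentials span $\spn\{e_l:l\in I\}\oplus\spn\{x_j\}$ while the adjoined points span a direct complement in $\mathbb R^n$. Hence the full configuration spans $\mathbb R^n$, and the displayed relation is its unique dependence (up to scaling). A unique dependence with at least two positive and at least two negative coefficients forces conical position: any convex combination summing to $0$ would have to be a scalar multiple of the relation, impossible because the relation has mixed signs; and moving any single point to one side of the relation would require the remaining coefficients all to share one sign, impossible when each side of the relation contains at least two points. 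This contradicts Problem~\ref{prob:main-problem}. The one bookkeeping subtlety, which will be the main obstacle to write cleanly, is the boundary case $|I|=d_i$, where $X_i\setminus S_y=\{e_c\}$ is a single point and no further $X_i$-element is adjoined; the argument goes through verbatim because in that case $\spn\{e_l:l\in I\}=\spn X_i$ already.
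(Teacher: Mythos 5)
Your argument is correct and essentially the paper's own proof: the same cancellation of the $x_i$-component between $x$ and $y$ (the paper writes the relation as $x=\alpha(\tfrac{1}{\lambda_i}y-\sum_{e_t\in S_y\setminus\{x_i\}}\tfrac{\lambda_t}{\lambda_i}e_t)+\beta x_j$), and the same completion to $n+1$ points, namely $\{x,y\}\cup(X'_1\cup\dots\cup X'_k)\setminus\{x_i\}$ with $|X'_t|=|X_t|-1$, $S_y\subseteq X'_i$, $x_j\in X'_j$, which coincides with your configuration. One wording fix: the ``fixed'' elements $e_c,f_c,v_l$ must be the \emph{omitted} ones, not adjoined ``together with'' the rest --- your own count $(|I|+2)+(d_i-|I|)+(d_j-1)+\sum_{l\neq i,j}d_l=n+1$ and your spanning/uniqueness argument presuppose this, whereas literally including them would give $n+k+1$ points containing all of some $X_l$ (so $0\in\conv$ of the chosen points, destroying both uniqueness of the dependence and the conical position).
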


\begin{proposition} \label{prop:all-equal-n-positive}
	If $x\in X$ is not in the space spanned by any $X_i$,
	$x\in\pos\{x_i,x_j\}$ for some $x_i\in X_i, x_j\in X_j$, then
	every point $y\in X$ in the space spanned by $X_i$ is a
	positive multiple of $\sum_{e\in S_y} e$ (we will use the
	convention that $y=\sum_{e\in S_y} e$).  Further more, if
	$p,q$ are in the space spanned by $X_i$ then either their
	supports $S_p, S_q$ are disjoint, or one is a subset of the
	other.
\end{proposition}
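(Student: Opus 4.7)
My plan is to prove both assertions of Proposition~\ref{prop:all-equal-n-positive} by contradiction, in each case exhibiting $n+1$ points of $X$ in conical position, which violates Theorem~\ref{thm:strongly-monotypic-description}. The central tool is the substitution $x_i = (x-\beta x_j)/\alpha$ coming from $x = \alpha x_i + \beta x_j$, together with the skeleton identity $\sum_{e \in X_i} e = 0$; combined, these let me trade any chosen vertex of $X_i \setminus \{x_i\}$ for a linear combination of $y$, $x$, $x_j$ and the remaining vertices of $X_i$.

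For the first assertion, I write $y = \sum_{t=1}^m \lambda_t e_t$ with $S_y = \{e_1,\dots,e_m\} \subseteq X_i \setminus \{x_i\}$ (permitted by Proposition~\ref{prop:restrict-support}) and assume for contradiction that the $\lambda_t$ are not all equal. I pick $e_{t_0}$ with $\lambda_{t_0} = \min_t \lambda_t$ and set
\[
P_0 = \{y,x,x_j\} \cup (X_i \setminus \{x_i, e_{t_0}\}).
\]
Working in the basis $(X_i\setminus\{e_{t_0}\})\cup\{x_j\}$ of $\spn P_0 = \spn X_i \oplus \spn\{x_j\}$, a direct computation gives the single linear dependence on $P_0$:
\[
y + \frac{\lambda_{t_0}}{\alpha}\,x - \frac{\lambda_{t_0}\beta}{\alpha}\,x_j + \sum_{t \in S_y \setminus \{t_0\}} (\lambda_{t_0}-\lambda_t)\,e_t + \lambda_{t_0} \sum_{e \in X_i \setminus (S_y \cup \{x_i, e_{t_0}\})} e = 0.
\]
The coefficients of $y$, $x$ and of every $e \in X_i \setminus (S_y \cup \{x_i, e_{t_0}\})$ are strictly positive; that of $x_j$ is strictly negative; and by the choice of $t_0$ each $\lambda_{t_0}-\lambda_t$ is nonpositive, with at least one strictly negative since not all $\lambda_t$ coincide. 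Hence the dependence carries at least two positive and at least two negative coefficients.

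I then enlarge $P_0$ to $n+1$ points of $X$ by adjoining $|X_j|-2$ vertices of $X_j\setminus\{x_j\}$ and $|X_l|-1$ vertices of $X_l$ for each $l \ne i,j$; together these complete $\spn P_0$ to all of $\mathbb{R}^n$, so the enlarged set $P$ retains the displayed relation (with zero coefficients on the new points) as its unique linear dependence. Because this dependence has mixed signs with at least two positives and at least two negatives, no rescaling can express a point of $P$ as a nonnegative combination of the others and $0 \notin \conv P$; thus $P$ is in conical position, contradicting Theorem~\ref{thm:strongly-monotypic-description}.

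For the second assertion, I assume $p,q \in X \cap \spn X_i$ have supports with $S_p\cap S_q$, $S_p\setminus S_q$ and $S_q\setminus S_p$ all nonempty. The first assertion together with the stated convention permits taking $p = \sum_{e\in S_p} e$ and $q = \sum_{e\in S_q} e$, whence
\[
p - q - \sum_{e \in S_p\setminus S_q} e + \sum_{e \in S_q\setminus S_p} e = 0
\]
is a dependence on $\{p,q\}\cup(S_p\setminus S_q)\cup(S_q\setminus S_p)$ with at least two positive and at least two negative coefficients. Extending to an $(n+1)$-set by skeleton vertices as in the first part produces a conical-position subset of $X$, once again contradicting the hypothesis. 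The main obstacle in both parts is the bookkeeping needed to pin down the unique dependence on the initial set and to verify that it remains unique after the skeleton extension; once uniqueness and the sign pattern are in hand, conical position follows automatically from the criterion of Section~\ref{sec:main-technique}.
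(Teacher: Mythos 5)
Your first assertion is proved correctly, and by essentially the paper's method (the paper first invokes Proposition~\ref{prop:almost-homo} to reduce to a single strange index and then eliminates the vertex $x_i$ via $x_i=(x-\beta x_j)/\alpha$; your variant with the minimal coefficient $\lambda_{t_0}$ is a harmless repackaging of the same dependence, and your extension by $|X_j|-2$ vertices of $X_j\setminus\{x_j\}$ and $|X_l|-1$ vertices of the other $X_l$ does keep the dependence unique, since $P_0$ already spans $\spn X_i\oplus\spn\{x_j\}$).

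The second assertion, however, has a genuine gap at the step you dismiss as bookkeeping: ``extending to an $(n+1)$-set by skeleton vertices as in the first part.'' Unlike the first part, here the initial set $\{p,q\}\cup(S_p\setminus S_q)\cup(S_q\setminus S_p)$ does not span $\spn X_i$, so a counting argument forces you to add back all but exactly two vertices of $X_i$ itself (besides $|X_l|-1$ vertices of each $X_l$, $l\ne i$), and the choice of the two omitted vertices is not free: your displayed relation is the unique dependence of the resulting $(n+1)$-set only if one omitted vertex lies in $S_p\cap S_q$ and the other lies in $X_i\setminus(S_p\cup S_q)$ (compare Lemma~\ref{lem:base-for-linear-independence}; if, say, you omit two vertices of $S_p\cap S_q$, then $p=-\sum_{e\in X_i\setminus S_p}e$ and $q=-\sum_{e\in X_i\setminus S_q}e$ both lie in the span of the retained vertices, the kernel becomes $2$-dimensional, and the conical-position conclusion no longer follows). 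In particular you need $X_i\setminus(S_p\cup S_q)\ne\emptyset$, which is exactly what Proposition~\ref{prop:restrict-support} supplies (it gives $x_i\notin S_p\cup S_q$), and which is where the hypothesis about $x$ enters a second time; your write-up uses that hypothesis only through the first assertion, which cannot suffice: in the uninvolved ($k=1$) situation, Proposition~\ref{prop:intersection} explicitly permits two supports with $S_p\cup S_q=E$ that are neither disjoint nor nested, so ``disjoint or nested'' is not a consequence of $p,q$ being support sums alone. The paper's proof makes precisely this choice, taking the $n+1$ points $\{p,q\}\cup X'_1\cup\dots\cup\bigl(X_i\setminus\{x_i,x_{i'}\}\bigr)\cup\dots\cup X'_k$ with $x_{i'}\in S_p\cap S_q$. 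Your argument is repaired by invoking Proposition~\ref{prop:restrict-support} for $p$ and $q$ and specifying the omitted pair $\{x_i,x_{i'}\}$ accordingly, but as written the extension step is not only unspecified, it is the place where the actual content of the claim lives.
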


An $X_i$ such that there is a point $x\in X$ contained in the relative
interior of $\pos\{x_i,x_j\}$ for some $x_i\in X_i, x_j\in X_j$ is
said to be an \emph{involved} $X_i$, otherwise it is called an
\emph{uninvolved} $X_i$.  If every $n+1$ points of $X$ in the space
spanned by any uninvolved $X_i$ are in a good position, then every
$n+1$ points in $X$ are always in a good position as in Theorem
\ref{thm:reduction} below.  That being said we have reduced Problem
\ref{prob:main-problem} to Problem \ref{prob:k=1}.  Note that our
settings for the points in the space spanned by an involved $X_i$ also
satisfy the solution of Problem \ref{prob:k=1} (as the readers can
compare with Section \ref{sec:k=1}).

\begin{theorem} \label{thm:reduction}
	If the requirements in all the above propositions are
	satisfied and the points in the space spanned by an uninvolved
	$X_i$ satisfy the condition in Problem \ref{prob:k=1}, then
	the points of $X$ satisfies the condition in Problem
	\ref{prob:main-problem}.
\end{theorem}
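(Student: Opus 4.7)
The plan is a proof by contradiction: assume some $n+1$ points $p_0,\dots,p_n\in X$ are in conical position, and derive a contradiction from the structural assumptions. First, I would use Propositions \ref{prop:in-pos-of-2} and \ref{prop:disjoint-or-equal} to classify every element of $X$ as either an \emph{inner} point sitting in $\spn X_i$ for exactly one $i$, or a \emph{pair} point lying on the ray $\pos\{x_i,x_j\}$ for a unique unordered pair $\{i,j\}$. This partitions $\{p_0,\dots,p_n\}$ into inner sets $A_i\subseteq\spn X_i$ and pair sets $B_{ij}\subseteq\pos\{x_i,x_j\}$ with $\sum_i|A_i|+\sum_{\{i,j\}}|B_{ij}|=n+1$, setting up a case analysis based on which $X_i$ are touched.

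Since the spans $\spn X_1,\dots,\spn X_k$ are pairwise orthogonal and directly sum to $\mathbb R^n$, projecting the (unique up to scaling) linear dependence $\sum_\ell\lambda_\ell p_\ell=0$ onto each $\spn X_i$ yields $k$ separate equations, each involving only $A_i$ and the pair points touching $\spn X_i$. Writing $n_i:=\dim\spn X_i$, for an uninvolved $X_i$ the projected equation involves only $A_i$: if $|A_i|>n_i$ then $A_i$ itself is an $(n_i+1)$-tuple in conical position inside $\spn X_i$, contradicting the Problem~\ref{prob:k=1} hypothesis on $X\cap\spn X_i$; otherwise I would enlarge $A_i$ by adjoining suitable skeleton vertices of $X_i$ (all in $X$, summing to $0$) into an $(n_i+1)$-tuple in conical position, yielding the same contradiction.

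For an involved $X_i$, Propositions \ref{prop:restrict-support}, \ref{prop:same-region-same-point}, and \ref{prop:all-equal-n-positive} impose strong rigidity: pair points in $B_{ij}$ attach to a single distinguished $x_i^*\in X_i$ when $|X_i|>2$; supports of inner points in $A_i$ avoid $x_i^*$; and each inner point is the exact sum of its support. Using this rigidity, together with Lemma~\ref{lem:base-for-linear-independence} to guarantee linear independence, I would execute the main technique of Section~\ref{sec:main-technique}: isolate a sub-tuple whose unique linear dependence has exactly two positive and two negative coefficients, contradicting conical position.

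The main obstacle is the coupling between $\spn X_i$ and $\spn X_j$ through the pair points: one coefficient $\lambda_\ell$ controls the contribution of a pair point to both projected equations, so the uninvolved and involved cases are not fully separable by projection alone. The technical heart is the padding step for uninvolved $X_i$: one must verify that after adjoining skeleton vertices the tuple remains separated from $0$ (which survives projection) while also ensuring that no element lies in the positive hull of the others (which is not automatic, and must be re-established by choosing padding from outside the supports already represented in the projected equation).
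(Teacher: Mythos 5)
There is a genuine gap, and it sits at the heart of the argument. Your plan for the involved spaces is to ``isolate a sub-tuple whose unique linear dependence has exactly two positive and two negative coefficients, contradicting conical position'' --- but this is backwards. By the convention of Section \ref{sec:main-technique}, a unique (up to scaling) dependence with two positive and two negative coefficients is precisely the \emph{certificate that a set is in conical position}; and since any subset of a set in conical position is again in conical position (the same hyperplane separates it from $0$, and positive hulls only shrink), exhibiting such a sub-tuple inside your assumed bad $(n+1)$-tuple contradicts nothing. To obtain a contradiction you must produce the opposite kind of certificate: a nontrivial relation among the chosen points with at most one coefficient of one sign, i.e.\ either $0$ in their convex hull or one point in the positive hull of the others. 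This is exactly what the paper constructs, and it needs a structural ingredient absent from your outline: for an involved $X_i$, any $|X_i|-1$ of the chosen points lying in $\spn X_i$ either are already in a good position, or contain $x_i$, or permit writing $-x_i$ as a sum of some of them (the paper's Lemma \ref{lem:presentation-by-many-points}, proved via the forest of supports using Propositions \ref{prop:restrict-support}, \ref{prop:all-equal-n-positive}, \ref{prop:same-for-father-n-child} and the strange-index propositions). These representations of $-x_i$ and $-x_j$ (or $\pm x_j$ when $\spn X_j$ is a line) are then combined with the one or two pair points in $\pos\{x_i,x_j\}\cup\pos\{x_i,-x_j\}$ to give explicit nonnegative relations --- the paper's Cases 0--3 --- and this combination step is exactly what your proposal leaves unspecified.

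The padding step you identify as the technical heart also cannot work, and is not needed. When an uninvolved $X_i$ carries at most $\dim X_i$ of the chosen points, you propose to adjoin skeleton vertices to manufacture a bad $(\dim X_i+1)$-tuple inside $\spn X_i$; but the theorem's hypothesis is precisely that every such tuple of points of $X\cap\spn X_i$ is in a good position, so no such construction can succeed, and nothing in the assumed badness of the original points forces one, since the projected dependence on that subspace may be trivial (all relevant coefficients zero). The correct move is the paper's pigeonhole: by Propositions \ref{prop:in-pos-of-2} and \ref{prop:disjoint-or-equal} the chosen points distribute over orthogonal subspaces of the form $\spn X_i$ or $\spn(X_i\cup X_j)$ whose dimensions sum to $n$, so some subspace receives more points than its dimension; if it is an uninvolved $\spn X_i$ you are done by hypothesis, and otherwise you are in an involved pair space, where the case analysis on how many of those points lie outside $\spn X_i\cup\spn X_j$ (zero, one, two, or at least three) carries the load. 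A further, more minor, slip: your projection argument assumes the dependence among the $n+1$ points is unique up to scaling, which fails when they do not span $\mathbb R^n$.
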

\begin{proof}
	Consider a set $C$ of any $n+1$ points of $X$.  From now on,
	we treat only the points in $C$ instead of those in $X$.  If
	the space of any uninvolved $X_i$ has more than $\dim X_i$
	points of $C$ then the points in $C$ are in a good position
	and we are done.  In the remaining case, there must be a pair
	$X_i,X_j$ such that there are more points of $C$ in the space
	spanned by $X_i\cup X_j$ than the dimension of $X_i\cup X_j$
	(by Proposition \ref{prop:disjoint-or-equal}).

	Let us first start with a lemma.
	\begin{lemma} \label{lem:presentation-by-many-points}
		Let $X_i$ be involved by a point in $\pos\{x_i,x_j\}$
		for some $x_i\in X_i$.  Consider some $|X_i|-1$ points
		in the space spanned by $X_i$, then either the points
		are in a good position, or $x_i$ can be found among
		the points, or $-x_i$ is the sum of some points among
		the considered points.  Moreover, if $|X_i|$ points
		are considered, then these points are always in a good
		position.
	\end{lemma}
	\begin{proof}
		Given a set of points in the space of $X_i$, we can
		establish the forest whose vertices are the supports
		of the points and a support is a father of another
		support if the former is superset of the latter (by
		Proposition \ref{prop:all-equal-n-positive}).

		Consider the two cases of the lemma:

		(i) $|X_i|-1$ points are considered.

		If $x_i$ is among the considered points, then the
		conclusion holds.

		We assume otherwise.  That is all the $\dim
		X_i=|X_i|-1$ points take only the subsets of
		$X_i\setminus\{x_i\}$ as their supports (by
		Proposition \ref{prop:restrict-support}).

		If the union of all the roots in the established
		forest is $X_i\setminus\{x_i\}$, then we have the
		desired representation of $-x_i$.  Indeed, let the
		points whose supports are the roots be $\{p_t\}_t$,
		then $\sum_{t} p_t= \sum_{e\in X_i\setminus\{x_i\}} e
		= -x_i$.  (Note that each $p_t$ is the sum of elements
		in the support of $p_t$, by Proposition
		\ref{prop:all-equal-n-positive}).

		If the union of all the roots is a proper subset of
		$X_i\setminus\{x_i\}$, then more points are considered
		than the cardinality of the union of the roots.  It
		follows that there is a vertex being the union of its
		children (easily by induction).  This indicates a good
		position since the point taking the father as the
		support is the sum of the points taking the children
		as the supports (also by Proposition
		\ref{prop:all-equal-n-positive}).

		(ii) $|X_i|$ points are considered.

		If the union of all the roots is $X_i$, then the sum
		of the points supported by the roots is $\sum_{e\in
		X_i} e = 0$, which indicates a good position.

		If the union of the roots is a proper subset of $X_i$,
		then there are more points considered than the
		cardinality of the union of the roots.  We also have a
		good position in this case (as already reasoned in
		(i)).
	\end{proof}

	We assume the dimension of $X_i\cup X_j$ is greater than $2$,
	otherwise, any three points from the plane are in a good
	position.  In case one of $X_i, X_j$ is $1$-dimensional, we
	always assume it is $X_j$.  We consider the following cases.

	\emph{Case 0}: If there is $0$ point of $C$ in $\spn (X_i\cup
	X_j)\setminus (\spn X_i\cup\spn X_j)$, then the space spanned
	by either $X_i$ or $X_j$ has more points of $C$ than its
	dimension, hence, they are in a good position.

	\emph{Case 1}: There is precisely $1$ point of $C$ in $\spn
	(X_i\cup X_j)\setminus (\spn X_i\cup \spn X_j)$, then the
	point is in $\pos\{x_i, x_j\}$ for some $x_i\in X_i, x_j\in
	X_j$.

	It suffices to treat the case when $\spn X_i$ has $\dim X_i$
	points of $C$ and $\spn X_j$ has $\dim X_j$ points of $C$, as
	otherwise one of them has more points than the dimension,
	which indicates a good position.

	Let the considered point be $y=\alpha x_i + \beta x_j$.

	By Lemma \ref{lem:presentation-by-many-points}, if we do not
	already have a good position in the space of $X_i$, then
	either $x_i\in C$ or $-x_i$ is the sum of some points in
	$C\cap\spn X_i$.  We have the same for $x_j$.  In either of
	the four combining cases, we always have a good position.
	Indeed, if $x_i\in C$ and $x_j\in C$, then $y,x_i,x_j$ are in
	a good position.  If $x_i\in C$ and $-x_j$ is the sum of the
	points $\{p_t\}_t$ in $C$, then $y+\beta \sum_{t} p_t = \alpha
	x_i$ implies that the points in $\{x_i,y\}\cup \{p_t\}_t$ are
	in a good position.  A similar argument also applies when the
	roles of $i,j$ are exchanged.  It remains to consider the case
	when both $-x_i,-x_j$ are the sums of the points in $C$, say
	$\{p_t\}_t$ and $\{q_s\}_s$, respectively.  In this case, the
	points in $\{y\}\cup \{p_t\}_t\cup \{q_s\}_s$ are in a good
	position since $y+\alpha\sum_t p_t +\beta\sum_s q_s = 0$.

	\emph{Case 2}: There are precisely $2$ points $y,z$ of $C$ in
	$\spn (X_i\cup X_j)\setminus (\spn X_i\cup \spn X_j)$.

	Let one point be in $\pos\{x_i, x_j\}$ for some $x_i\in X_i,
	x_j\in X_j$.  The other point must be in the same positive
	hull, or in $\pos\{x_i, -x_j\}$ in case $\spn X_j$ is
	$1$-dimensional (by Proposition
	\ref{prop:same-region-same-point}).  To cover all the cases,
	we say the two points are in $\pos\{x_i, x_j\}\cup
	\pos\{x_i,-x_j\}$.

	The space spanned by either $X_i$ or $X_j$ must have at least
	as many points as its dimension.

	(i) Suppose it is $X_i$.  By Lemma
	\ref{lem:presentation-by-many-points} either $x_i\in C$ or
	$-x_i$ is the sum of some points in $C\cap\spn X_i$.  No
	matter $x_i$ is in $\pos\{y,z\}$ or $y\in\pos\{x_i,z\}$ (or
	$z\in\pos\{x_i,y\}$), with the representation of $-x_i$ by the
	points in $C\cap\spn X_i$, we always have a good position by a
	similar argument to the one in Case 1.

	(ii) Suppose it is $X_j$, we are done if the space of $X_j$ is
	not $1$-dimensional, by the same argument as in (i).  It
	remains to consider the case of $1$-dimensional space, that is
	either $x_j$ or $-x_j$ is in $C$.  The three points $y,z,x_j$
	or $y,z,-x_j$ are always in a good position as they are on the
	same plane.

	\emph{Case 3}: If there are at least $3$ points of $C$ in
	$(\pos\{x_i, x_j\}\cup \pos\{x_i,-x_j\}) \setminus (\spn X_i
	\cup \spn X_j)$ for some $x_i\in X_i, x_j\in X_j$, then the
	three points, which are on the same plane, are always in a
	good position.

	All the cases were covered, and the conclusion follows.
\end{proof}

\section{Solution of Problem \ref{prob:k=1}}
\label{sec:k=1}
We first give all the necessary conditions for additional points of
$X\setminus E$ in the following propositions.  Their proofs are not so
hard and will be given in Section \ref{sec:proofs-of-propositions}.
Finally, Theorem \ref{thm:k=1} claims that these necessary conditions
are also sufficient.

In this section, the definition of support is still as in Lemma
\ref{lem:base-for-linear-independence} (which is also the support in
Definition \ref{def:support} for $k=1$).  We still denote by $S_p$ the
support of $p$.

As two vectors with one being a positive multiple of the other present
the same normal, we fist give a convention for convenience later.
\begin{convention} \label{cov:the-convention}
	Every point $p=\sum_{e_i\in S_p}\lambda_i e_i$ in $X$ is
	normalized so that $\min_{e_i\in S_p} \lambda_i = 1$.
\end{convention}

\begin{proposition} \label{prop:almost-homo}
	Every point $p=\sum_{e_i\in S_p}\lambda_i e_i$ in $X$ has all
	$\lambda_i=1$ except at most one $\lambda_j>1$.
\end{proposition}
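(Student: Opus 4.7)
The plan is a proof by contradiction via the technique of Section \ref{sec:main-technique}: assuming two of the coefficients of $p$ strictly exceed $1$, I will exhibit $n+1$ points of $X$ whose unique linear dependence has at least two positive and at least two negative coefficients, which forces a bad (conical) position.

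First I would dispatch the trivial case $p \in E$, where $|S_p| = 1$ and the claim holds vacuously; so assume $p \notin E$, which (together with the fact that no element of $X$ is a positive multiple of another) forces $|S_p| \ge 2$. By Convention \ref{cov:the-convention}, some $e_c \in S_p$ has $\lambda_c = 1$. Suppose for contradiction that distinct $e_j, e_k \in S_p$ satisfy $\lambda_j, \lambda_k > 1$; in particular $j,k \ne c$.

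The $n+1$ points I would consider are $\{p\} \cup (E \setminus \{e_c\})$. Since $E \setminus \{e_c\}$ is a basis of $\mathbb{R}^n$, any linear dependence among these $n+1$ points is unique up to scaling, and it can be written down explicitly by substituting $e_c = -\sum_{i \ne c} e_i$ into $p = \sum_{e_i \in S_p} \lambda_i e_i$, yielding
\[
	p + \sum_{e_i \in S_p \setminus \{e_c\}} (\lambda_c - \lambda_i)\, e_i + \lambda_c \sum_{e_i \in E \setminus S_p} e_i = 0.
\]
Using $\lambda_c = 1$ together with $|S_p| \le n$ (so $E \setminus S_p$ is nonempty), the coefficient of $p$ and of every $e_i \in E \setminus S_p$ equals $+1$, giving at least two positive coefficients; meanwhile the coefficients on $e_j$ and $e_k$ are $1 - \lambda_j < 0$ and $1 - \lambda_k < 0$, giving at least two negative coefficients. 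The $n+1$ points are therefore in bad position, contradicting the hypothesis of Problem \ref{prob:k=1}.

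The main obstacle, and the one place where care is needed, is the sign bookkeeping: the entire argument hinges on picking $e_c$ to be a minimum-coefficient index, which simultaneously makes $\lambda_c - \lambda_i \le 0$ on $S_p \setminus \{e_c\}$ (strictly negative whenever $\lambda_i > 1$) while keeping $\lambda_c > 0$ so that the coefficients on $E \setminus S_p$ stay positive. Once this choice is in place, the remaining ingredients — linear independence of $E \setminus \{e_c\}$, the bound $|S_p| \le n$, and the uniqueness of the dependence — follow immediately from the setup of Problem \ref{prob:k=1} and Lemma \ref{lem:base-for-linear-independence}.
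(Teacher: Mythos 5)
Your proof is correct and follows essentially the same route as the paper's: the paper also picks the index with coefficient $1$ (there labelled $e_0$), substitutes $e_0=-\sum_{i\ne 0}e_i$, and reads off two positive and two negative coefficients in the unique dependence among $\{p\}\cup(E\setminus\{e_0\})$. Your explicit remarks that $|S_p|\le n$ (so $E\setminus S_p\ne\emptyset$) and that the case $p\in E$ is trivial are minor points the paper leaves implicit.
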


\begin{definition}
Given $p=\sum_{e_i\in S_p}\lambda_i e_i$ in $X$, the point $e_j\in
S_p$ such that $\lambda_j>1$, if it exists, is said to be the
\emph{strange index} of $p$.  If there is such $p$, we also say $S_p$
has a strange index \footnote{Note that $S_p$ may be also the support
of another point, which may not have a strange index.  For
convenience, we use the point $e_i$ for the strange index instead of
the index $i$ as the name may suggest.}.
\end{definition}

\begin{proposition} \label{prop:intersection}
	Given two points $p, q$ in $X$, then either (i) $S_p\cap
	S_q=\emptyset$, or (ii) one of $S_p, S_q$ is a subset of the
	other, or (iii) $S_p\cup S_q=E$ and $|S_p\cap S_q|=1$, or (iv)
	$S_p\cup S_q=E$ and $|S_p\cap S_q|=n-1$.
\end{proposition}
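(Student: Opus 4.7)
The strategy is to prove the contrapositive via the main technique of Section~\ref{sec:main-technique}. Writing $p=\sum_{e_i\in S_p}\lambda_i^p e_i$ and $q=\sum_{e_j\in S_q}\lambda_j^q e_j$ according to Convention~\ref{cov:the-convention}, I partition $E$ into $A=S_p\setminus S_q$, $B=S_q\setminus S_p$, $C=S_p\cap S_q$, and $D=E\setminus(S_p\cup S_q)$. The simultaneous failure of (i)--(iv) is equivalent to $|A|,|B|,|C|\ge 1$ together with either (a) $|D|\ge 1$, or (b) $|D|=0$, $|C|\ge 2$, and $\max(|A|,|B|)\ge 2$. In each case I will exhibit a set $T\subset X$ of $n+1$ points in conical position, contradicting Problem~\ref{prob:k=1}.

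In case (a), pick $e_c\in C$ and $e_d\in D$ and set $T=\{p,q\}\cup(E\setminus\{e_c,e_d\})$. Eliminating $e_c$ between the representations of $p$ and $q$ gives
\[
\lambda_c^q\,p-\lambda_c^p\,q-\lambda_c^q\sum_{e_i\in A}\lambda_i^p e_i+\lambda_c^p\sum_{e_i\in B}\lambda_i^q e_i-\sum_{e_i\in C\setminus\{e_c\}}(\lambda_c^q\lambda_i^p-\lambda_c^p\lambda_i^q)e_i=0,
\]
a linear relation that lies in $\spn T$. The coefficient on $p$ is $\lambda_c^q>0$, on $q$ it is $-\lambda_c^p<0$, on every $e_i\in B$ it is $\lambda_c^p\lambda_i^q>0$, and on every $e_i\in A$ it is $-\lambda_c^q\lambda_i^p<0$; since $|A|,|B|\ge 1$, the relation has at least two positive and at least two negative coefficients. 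Because $p$ has a nonzero $e_c$-component while $\spn(E\setminus\{e_c,e_d\})$ is a proper $(n-1)$-dimensional subspace missing $e_c$, the rank of $T$ equals $n$, so the relation is unique up to scaling and the main technique places $T$ in conical position.

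In case (b), WLOG assume $|A|\ge 2$. Pick $e_\alpha\in A$ avoiding the strange index of $p$ when it lies in $A$ (possible since $|A|\ge 2$), and $e_\beta\in B$; set $T=\{p,q\}\cup(E\setminus\{e_\alpha,e_\beta\})$. When $|B|\ge 2$, choose $e_\beta$ similarly to keep any strange index of $q$ inside $T$, and use the identity (derived from $\sum_{e_i\in E}e_i=0$ and $D=\emptyset$)
\[
p+q-\sum_{e_i\in C}e_i-(c_p-1)e_{j_p}-(c_q-1)e_{j_q}=0,
\]
in which the last two terms vanish when the corresponding strange index is absent. The coefficients on $p,q$ are positive; on each $e_i\in C$ negative; and on $e_{j_p}$ or $e_{j_q}$ negative when those strange indices occur, giving at least two positive and at least two negative coefficients because $|C|\ge 2$. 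When $|B|=1$ and the strange index of $q$ sits at the unique element $e_\beta$ of $B$, the symmetric identity uses an element not in $T$, so instead I eliminate $e_\beta$ via $\sum_{e_i\in E}e_i=0$ to obtain
\[
c_q p+q-\sum_{e_i\in C}e_i-c_q(c_p-1)e_{j_p}=0,
\]
with the same sign pattern. A rank computation analogous to case~(a) (using that $p$ or $q$ contributes the missing direction) gives rank $T=n$, hence uniqueness of the relation and bad position via the main technique.

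The principal obstacle is the bookkeeping for strange indices. The derived relation must lie in $\spn T$, which forces the removed elements to be chosen so that no strange-index element is accidentally excluded; this is why the ``WLOG $|A|\ge 2$'' reduction is used in case (b) and why the sub-case $|B|=1$ with $j_q=e_\beta$ requires the scaled variant $c_q p+q-\sum_C e_i=\cdots$ in place of the symmetric $p+q-\sum_C e_i=\cdots$. Once the relations are in hand, verifying the conical position of $T$ reduces to the same-sign-neighbour check on the nonzero coefficients and to noting that any zero-coefficient element of $T$ falls outside $\spn(T\setminus\{x\})$ (since $\spn T$ has dimension exactly $n$).
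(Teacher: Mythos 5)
Your proof is correct and follows essentially the same route as the paper: first eliminating a common support element to force $S_p\cup S_q=E$ when $E\setminus(S_p\cup S_q)\neq\emptyset$, then using $\sum_{e_i\in E}e_i=0$ together with Proposition~\ref{prop:almost-homo} to produce an $(n+1)$-point set carrying a unique relation with at least two positive and two negative coefficients, with linear independence supplied by Lemma~\ref{lem:base-for-linear-independence}. The only difference is organizational: you phrase the second step via the strange-index normal form and one scaled variant for the corner case $|S_q\setminus S_p|=1$, whereas the paper runs the equivalent case analysis according to where the maximal coefficients $\lambda_a,\theta_b$ lie.
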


\begin{proposition} \label{prop:same-support-same-strange-index}
	Given two points $p,q$ in $X$ having the same support
	$S_p=S_q$ of more than two points, then if both of them have
	strange indices, these indices are the same.
\end{proposition}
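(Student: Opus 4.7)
The plan is to apply the main technique of Section \ref{sec:main-technique}: assume for contradiction that $p,q$ have the same support $S=S_p=S_q$ with $|S|\ge 3$ but distinct strange indices $e_a$ and $e_b$, and exhibit $n+1$ points of $X$ in bad position. By Proposition \ref{prop:almost-homo} and Convention \ref{cov:the-convention} I can write
\[
p = \lambda_a e_a + \sum_{e_i \in S \setminus \{e_a\}} e_i,\qquad q = \mu_b e_b + \sum_{e_i \in S \setminus \{e_b\}} e_i,
\]
with $\lambda_a,\mu_b>1$. Because the two representations have coefficient $1$ on every $e_i \in S \setminus \{e_a,e_b\}$, those terms cancel in the difference, producing the clean four-term identity
\[
p - q - (\lambda_a-1)\,e_a + (\mu_b-1)\,e_b = 0,
\]
whose four nonzero coefficients have signs $+,-,-,+$.

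A short auxiliary observation is that $|S|\le n$: if $S=E$, then $\sum_{e_i \in E} e_i = 0$ allows one to subtract a common amount from every coefficient to kill all but the strange one, collapsing the support of $p$ to $\{e_a\}$ and contradicting the minimality of $S_p$. In particular $n\ge 3$, so I can pick $e_c \in S\setminus\{e_a,e_b\}$ (which exists since $|S|\ge 3$) and $e_d \in E\setminus S$ (which exists since $|S|\le n$). I then set $T := E \setminus \{e_a,e_b,e_c,e_d\}$ and $C := \{p,q,e_a,e_b\} \cup T$, giving $|C|=4+(n-3)=n+1$.

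It remains to certify that the displayed identity is exactly the unique (up to scaling) dependence on $C$. For this I check that the $n$ points $C\setminus\{p\} = \{q\} \cup (E\setminus\{e_c,e_d\})$ are linearly independent: the set $E\setminus\{e_c,e_d\}$ consists of $n-1$ vectors of $E$ and is thus independent, and Lemma \ref{lem:base-for-linear-independence}, applied with $e_c \in S_q$ and $e_d \in E\setminus S_q$, ensures $q \notin \spn(E\setminus\{e_c,e_d\})$. Hence the space of dependences on $C$ is one-dimensional, and our identity with its $+,-,-,+$ signature is its unique generator (all elements of $T$ appearing with coefficient $0$). The main technique then yields that $C$ is in bad position, contradicting the hypothesis on $X$.

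I expect the only mildly delicate step to be the bookkeeping around the choice of $T$, in particular the verification $|S|\le n$ that supplies $e_d \notin S$, and noticing that exactly the cancellation of the ``homogeneous'' unit coefficients on $S\setminus\{e_a,e_b\}$ guaranteed by Proposition \ref{prop:almost-homo} is what keeps the four-term relation free of any other $e_j$. Everything else is routine linear-algebra verification.
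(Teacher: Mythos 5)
Your proof is correct and takes essentially the same route as the paper: the paper uses exactly the same four-term relation $p-q=(\lambda_a-1)e_a-(\theta_b-1)e_b$ and the same witness set $\{p,q\}\cup E\setminus\{e_c,e_d\}$ with $e_c\in S_p\setminus\{e_a,e_b\}$, $e_d\in E\setminus S_p$, certified to be in a bad position by the technique of Section \ref{sec:main-technique} together with Lemma \ref{lem:base-for-linear-independence}. Your auxiliary verification that $|S_p|\le n$ is a detail the paper records separately (right after Definition \ref{def:support}) rather than a genuinely different argument.
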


\begin{proposition}
\label{prop:disjoint-supports->at-most-1-has-strange-index}
	Given two points $p,q$ in $X$ with $S_p\cap S_q=\emptyset$
	then at most one of $p,q$ can have a strange index.
\end{proposition}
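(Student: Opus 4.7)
The plan is to argue by contradiction via the main technique of Section~\ref{sec:main-technique}: suppose both $p$ and $q$ have strange indices $e_a\in S_p$ and $e_b\in S_q$ with coefficients $\lambda_a,\mu_b>1$. Write $p=\alpha e_a+\sum_{e_i\in S_p} e_i$ and $q=\beta e_b+\sum_{e_j\in S_q} e_j$, where $\alpha:=\lambda_a-1>0$ and $\beta:=\mu_b-1>0$. Because Convention~\ref{cov:the-convention} forces $p=e_a$ (and hence no strange index) whenever $S_p=\{e_a\}$, the mere existence of a strange index already yields $|S_p|,|S_q|\ge 2$.

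The candidate bad configuration is $C:=\{p,q\}\cup(E\setminus\{e_c,e_d\})$, where I pick any $e_c\in S_p\setminus\{e_a\}$ and $e_d\in S_q\setminus\{e_b\}$; these exist by the previous paragraph and are distinct since $S_p\cap S_q=\emptyset$. Then $|C|=n+1$. Summing the expressions for $p$ and $q$ and substituting $\sum_{e_i\in S_p\cup S_q} e_i=-\sum_{e_k\in E\setminus(S_p\cup S_q)} e_k$ (from $\sum_{e_k\in E}e_k=0$, using $S_p\cap S_q=\emptyset$) yields
\[
    p+q-\alpha e_a-\beta e_b+\sum_{e_k\in E\setminus(S_p\cup S_q)} e_k=0.
\]
Every term here lies in $C$: both $e_a$ and $e_b$ survive in $E\setminus\{e_c,e_d\}$ by the choice of $e_c,e_d$, and $E\setminus(S_p\cup S_q)$ is disjoint from $S_p\ni e_c$ and from $S_q\ni e_d$.

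To finish, I would verify that this is the unique linear dependence among the points of $C$ (up to scaling). Lemma~\ref{lem:base-for-linear-independence} applied to $p$ with $e_c\in S_p$ and $e_d\in E\setminus S_p$ shows $p\notin\spn(E\setminus\{e_c,e_d\})$, so $\{p\}\cup(E\setminus\{e_c,e_d\})$ is a linearly independent $n$-subset of $C$; hence $\dim\spn C=n$ and the relation above is unique. Its coefficients are $+1$ on $p$, $q$, and on each element of $E\setminus(S_p\cup S_q)$, and $-\alpha,-\beta$ on $e_a,e_b$, giving at least two positive and exactly two negative coefficients in every case. This puts $C$ in a bad position, contradicting the hypothesis on $X$. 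The main obstacle I anticipate is pinning down the right auxiliary pair $e_c,e_d$: they must be deleted from $E$ so that the resulting relation closes up inside $C$ while still exhibiting the mixed sign pattern required by the main technique, and one must remember that $|S_p|+|S_q|<n+1$ simply adds extra positive terms (so the conical-position conclusion remains valid) rather than breaking the argument.
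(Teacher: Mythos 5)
Your proof is correct and coincides with the paper's own argument: the same dependence $p+q-(\lambda_a-1)e_a-(\theta_b-1)e_b+\sum_{e_k\in E\setminus(S_p\cup S_q)}e_k=0$ on the same point set $\{p,q\}\cup E\setminus\{e_c,e_d\}$ with $e_c\in S_p\setminus\{e_a\}$, $e_d\in S_q\setminus\{e_b\}$, invoking the main technique. You merely spell out details the paper leaves implicit (existence of $e_c,e_d$, the use of Lemma \ref{lem:base-for-linear-independence} for independence, and the extra positive terms when $S_p\cup S_q\ne E$).
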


\begin{proposition} \label{prop:same-for-father-n-child}
	Given two points $p, q$ in $X$ with $S_p\subset S_q$,
	$|S_p|\ge 2$, $p=\sum_{e_i\in S_p} \lambda_i e_i$ and
	$q=\sum_{e_i\in S_q} \theta_i e_i$.  We then have
	$\lambda_i=\theta_i$ for every $e_i\in S_p$.
\end{proposition}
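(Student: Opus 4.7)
The plan is to argue by contradiction: assume $\lambda_j\neq\theta_j$ for some $e_j\in S_p$ and use the main technique of Section \ref{sec:main-technique} to exhibit $n+1$ points of $X$ in a bad position.

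First I need a vertex $e_b\in E\setminus S_q$. This exists because $|S_q|\le n$: if $S_q=E$ with all $\theta_i>0$, subtracting $(\min\theta_i)\sum_{i=0}^{n} e_i=0$ from the representation of $q$ would annihilate some coefficient, giving a strictly smaller support and contradicting minimality of $S_q$. Fix such an $e_b$ (which also lies in $E\setminus S_p$), and pick $e_a\in S_p$ \emph{minimising} the ratio $\lambda_a/\theta_a$. Consider the $n+1$ points $C:=\{p,q\}\cup(E\setminus\{e_a,e_b\})$. Because $e_a\in S_p\subseteq S_q$ and $e_b\in (E\setminus S_p)\cap(E\setminus S_q)$, Lemma \ref{lem:base-for-linear-independence} gives that neither $p$ nor $q$ lies in $\spn(E\setminus\{e_a,e_b\})$; dropping either of them from $C$ therefore leaves $n$ linearly independent vectors, so $C$ admits a unique (up to scaling) linear relation.

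The relation is computed by eliminating $e_a$ via $\theta_a p-\lambda_a q$, which by direct substitution in the expansions of $p$ and $q$ equals
\[
    \sum_{e_i\in S_p\setminus\{e_a\}} (\theta_a\lambda_i-\lambda_a\theta_i)\, e_i \;-\; \sum_{e_i\in S_q\setminus S_p} \lambda_a\theta_i\, e_i.
\]
Since $e_b\notin S_q$, no $e_b$ term appears, so this is a nontrivial relation among $C$ and hence the unique one. Its signs are $+\theta_a$ on $p$, $-\lambda_a$ on $q$, sign of $\lambda_a\theta_i-\theta_a\lambda_i$ on each $e_i\in S_p\setminus\{e_a\}$, and strictly positive $\lambda_a\theta_i$ on each $e_i\in S_q\setminus S_p$. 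Since $S_p\subsetneq S_q$, at least one additional positive coefficient sits on $S_q\setminus S_p$. If the ratios $\lambda_i/\theta_i$ on $S_p$ are not all equal, the minimality of $e_a$ yields some $e_c\in S_p\setminus\{e_a\}$ with $\lambda_c/\theta_c>\lambda_a/\theta_a$, producing the coefficient $\lambda_a\theta_c-\theta_a\lambda_c<0$ alongside the negative coefficient on $q$. The unique relation then has at least two positive and at least two negative coefficients, so by the main technique $C$ is in a bad position, contradicting the hypothesis on $X$.

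Consequently $\lambda_i/\theta_i=r$ is a common constant on $S_p$. To force $r=1$, apply Proposition \ref{prop:almost-homo} to $q$: at most one $\theta_i$ with $e_i\in S_q$ exceeds $1$, so since $|S_p|\ge 2$ there is some $e_i\in S_p$ with $\theta_i=1$. Combined with Convention \ref{cov:the-convention}'s normalisation $\min_{e_i\in S_p}\lambda_i=1$, this forces $r=1$, giving $\lambda_i=\theta_i$ for all $e_i\in S_p$. The main obstacle is the bookkeeping of signs in the unique relation: choosing $e_b\in E\setminus S_q$ keeps $q$ outside the ambient hyperplane, while choosing $e_a$ with minimum ratio is what guarantees that any violation of the conclusion surfaces as a second strictly negative coefficient, completing the $2$-positive-$2$-negative pattern the main technique requires.
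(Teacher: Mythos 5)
Your proof is correct and follows essentially the same route as the paper: both eliminate a common support vertex via the combination $\theta_a p-\lambda_a q$ and exhibit a bad position among $\{p,q\}\cup E\setminus\{e_a,e_b\}$ with $e_b\notin S_q$, relying on Lemma \ref{lem:base-for-linear-independence}, the main technique, and Proposition \ref{prop:almost-homo}. The only difference is organizational: you pick $e_a$ minimizing $\lambda_a/\theta_a$ to get proportionality of the coefficients in one step and then normalize via Convention \ref{cov:the-convention}, whereas the paper fixes $\lambda_0=1$, first derives $\theta_i\ge\lambda_i$, and then forces equality with a second relation.
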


\begin{proposition}
\label{prop:support-insertecting-one-point->strange-index}
	Suppose $n\ge 3$.  Given two points $p,q$ in $X$ with $S_p\cup
	S_q=E$ and $S_p\cap S_q=\{e_k\}$.  If each of $S_p, S_q$ has
	at least $3$ points, then the strange index of any point of
	$p, q$, if any, is $e_k$.  If $|S_p|=2$ then the strange index
	of $S_q$, if any, is $e_k$.
\end{proposition}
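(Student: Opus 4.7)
The plan is to apply the main technique of Section~\ref{sec:main-technique}: assume for contradiction that one of $p,q$, say $p$, has a strange index $e_m \ne e_k$; by the symmetry between $p$ and $q$ this will imply the analogous statement for $q$. The preparatory calculation is to use $\sum_{e_i \in E} e_i = 0$ together with the partition $E \setminus S_p = S_q \setminus \{e_k\}$ to rewrite
$p = \mu e_m - \sum_{e_j \in S_q \setminus \{e_k\}} e_j$,
where $\mu = \lambda_m - 1 > 0$. The point of this form is that in $p + q$ the contributions over $S_q \setminus \{e_k\}$ almost entirely cancel against the corresponding entries of $q$, leaving a very short expression.

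For the main case $|S_p|, |S_q| \ge 3$, split on the strange index of $q$. If $q$ has no strange index or its strange index is $e_k$, one computes $p + q = \mu e_m + \theta_k e_k$ with $\theta_k \ge 1$; if the strange index is $e_t \in S_q \setminus \{e_k\}$, one computes $p + q = \mu e_m + e_k + (\theta_t - 1) e_t$. In either sub-case the resulting relation has $p, q$ as the only positive terms and the named $e_i$'s as strictly negative terms. We then take the $n+1$ points to be $\{p, q\} \cup (E \setminus \{e_a, e_b\})$ with $e_a \in S_p \setminus \{e_k, e_m\}$ (nonempty since $|S_p| \ge 3$) and $e_b \in S_q \setminus \{e_k, e_t\}$ (nonempty since $|S_q| \ge 3$), so that $e_m, e_k$ and (when present) $e_t$ all remain in the chosen set. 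To check uniqueness of the dependency, we verify that $p, q \notin \spn(E \setminus \{e_a, e_b\})$: two $E$-representations of the same vector differ only by a multiple of $(1,\ldots,1)$, so membership in this span is equivalent to having equal coefficients on $e_a$ and $e_b$ in the canonical expansion, which fails for both $p$ and $q$ by our choice. The unique dependency then has at least two positive and two negative coefficients, certifying a bad position and contradicting the hypothesis on $X$.

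The case $|S_p| = 2$ is a direct analogue. Writing $S_p = \{e_k, e_\ell\}$ forces $S_q = E \setminus \{e_\ell\}$, and if $q$ had strange index $e_m \ne e_k$ then $\sum_{e_i \in E} e_i = 0$ gives $q = (\theta_m - 1) e_m - e_\ell$; eliminating $e_\ell$ from $p = \lambda_k e_k + \lambda_\ell e_\ell$ yields the clean $2$-positive, $2$-negative relation $p + \lambda_\ell q - \lambda_k e_k - \lambda_\ell(\theta_m - 1) e_m = 0$. Using $\{p, q\} \cup (E \setminus \{e_\ell, e_b\})$ for any $e_b \in E \setminus \{e_k, e_m, e_\ell\}$ (nonempty because $n \ge 3$) and running the same spanning verification completes the contradiction. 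The chief obstacle I expect is the case bookkeeping: the choices of $e_a, e_b$ must be coordinated with the sub-case for $q$'s strange index so that every point appearing in the dependency stays in the final $(n+1)$-point set while $p$ and $q$ both remain outside $\spn(E \setminus \{e_a, e_b\})$; the hypotheses $|S_p|, |S_q| \ge 3$ and $n \ge 3$ are invoked precisely to guarantee that such choices always exist.
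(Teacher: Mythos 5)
Your proposal is correct and uses essentially the same argument as the paper: both derive, via $\sum_{e_i\in E}e_i=0$, a short dependency among $p$, $q$ and two or three basis points with two positive and at least two negative coefficients, and certify a bad position on $\{p,q\}\cup E\setminus\{e_a,e_b\}$ with $e_a\in S_p$, $e_b\in S_q$ chosen off the strange indices and $e_k$. The only (cosmetic) difference is in the $|S_p|=2$ case, where your single relation $p+\lambda_\ell q$ replaces the paper's two subcases according to whether the strange index of $p$ is $e_0$ or $e_1$.
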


\begin{proposition} \label{prop:2-strange-indices}
	Suppose $n\ge 3$.  Given two points $p,q$ in $X$ with $S_p\cup
	S_q=E$ and $|S_p\cap S_q|=n-1$ then $\lambda_a>1$,
	$\theta_b>1$ and $\lambda_a\theta_b \ge \lambda_a+\theta_b$
	where $\{e_a\}=S_p\setminus S_q$, $\{e_b\}=S_q\setminus S_p$
	and $p=\sum_{e_i\in S_p} \lambda_i e_i$, $q=\sum_{e_i\in S_q}
	\theta_i e_i$.
\end{proposition}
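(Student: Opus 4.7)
The plan is to work with a single test configuration, the $n+1$-point set
\[
T \;=\; \{p, q\} \cup \{e_i : i\ne a, b\},
\]
and to apply the main technique of Section \ref{sec:main-technique} twice: once to force $\lambda_a>1$ (and, symmetrically, $\theta_b>1$), and once to obtain the inequality.

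I would first record that $|S_p|=|S_q|=n$. Since $|S_p\cup S_q|=n+1$ and $|S_p\cap S_q|=n-1$, we have $|S_p|+|S_q|=2n$, and any support has size at most $n$ because a representation using all $n+1$ vertices of $E$ can be reduced by subtracting a multiple of $\sum_i e_i=0$. Hence $S_p=E\setminus\{e_b\}$ and $S_q=E\setminus\{e_a\}$. Combining the expansions of $p$ and $q$ via the identity $e_a+e_b=-\sum_{i\ne a,b}e_i$ then yields the linear relation
\[
\theta_b\, p \;+\; \lambda_a\, q \;+\; \sum_{i\ne a,b}\bigl(\lambda_a\theta_b-\theta_b\lambda_i-\lambda_a\theta_i\bigr)\, e_i \;=\; 0,
\]
and this is the unique dependence on $T$ because Lemma \ref{lem:base-for-linear-independence}, applied with $e_a\in S_p$ and $e_b\in E\setminus S_p$, shows that $\{p\}\cup\{e_i:i\ne a,b\}$ are linearly independent.

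To prove $\lambda_a>1$, I would assume $\lambda_a=1$ toward contradiction. Each coefficient of $e_i$ in the display becomes $\theta_b(1-\lambda_i)-\theta_i$, which is strictly negative because $\lambda_i\ge 1$ (Convention \ref{cov:the-convention}) and $\theta_i>0$. So the unique dependence on $T$ has $2$ positive and $n-1\ge 2$ negative coefficients which, by the argument sketched in the final paragraph, places $T$ in conical position---contradicting the hypothesis of Problem \ref{prob:k=1}. Exchanging the roles of $p$ and $q$ gives $\theta_b>1$. For the inequality, once $\lambda_a,\theta_b>1$ we know $e_a$ is the strange index of $p$ and $e_b$ is the strange index of $q$, so Proposition \ref{prop:almost-homo} forces $\lambda_i=\theta_i=1$ for every $i\ne a,b$. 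The $e_i$-coefficients then all collapse to the single value $\lambda_a\theta_b-\lambda_a-\theta_b$; were this negative, the same sign-pattern argument would again place $T$ in conical position, so $\lambda_a\theta_b\ge \lambda_a+\theta_b$.

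The main obstacle is the auxiliary claim that a unique linear dependence on $T$ with $2$ positive and $n-1$ negative coefficients really places $T$ in conical position. Strict separation from $0$ is immediate: if $0\in\conv T$, the unique-up-to-scaling dependence would itself be a nonnegative relation, which ours is not. The harder half is that no point of $T$ lies in the positive hull of the rest. Solving the dependence for $p$ (or $q$) expresses it as a linear combination in which the coefficient of $q$ (resp.\ $p$) is negative, so $p$ is not a nonnegative combination; solving for an $e_j$ expresses it as a combination in which the coefficients on the other $e_i$'s carry the sign of $-c_i/c_j$ with $c_i,c_j<0$, hence are negative. This last check uses $n\ge 3$ to guarantee the existence of such a remaining $e_i$, and this is exactly the place in the proof where the hypothesis $n\ge 3$ is essential.
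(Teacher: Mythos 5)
Your proof is correct and takes essentially the same route as the paper's: the same test configuration $\{p,q\}\cup E\setminus\{e_a,e_b\}$ and the same unique dependence $\theta_b p+\lambda_a q+\sum_{i\ne a,b}\bigl(\lambda_a\theta_b-\theta_b\lambda_i-\lambda_a\theta_i\bigr)e_i=0$, analyzed via the sign technique of Section \ref{sec:main-technique}. The only organizational difference is that you first force $\lambda_a>1$ and $\theta_b>1$ by a direct sign check and then invoke Proposition \ref{prop:almost-homo} to get $\lambda_i=\theta_i=1$ on $S_p\cap S_q$, whereas the paper obtains that normalization from Remark \ref{rem:strange-indices-in-iv} and then dispatches all failure cases in a single computation.
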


\begin{proposition}
\label{prop:two-big-supports-intersecting-at-one-point}
	Given two points $p,q$ in $X$ with $S_p\cup S_q=E$, $|S_p\cap
	S_q|=1$ and $|S_p|\ge 3, |S_q|\ge 3$, then for every point
	$r\in X$, the support $S_r$ is a subset of either $S_p$ or
	$S_q$.
\end{proposition}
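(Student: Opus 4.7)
I plan to argue by contradiction: suppose some $r\in X$ has $S_r\not\subseteq S_p$ and $S_r\not\subseteq S_q$. Since $S_p\cup S_q=E$ with $S_p\cap S_q=\{e_k\}$, this produces elements $e_a\in S_r\cap(S_p\setminus S_q)$ and $e_b\in S_r\cap(S_q\setminus S_p)$; in particular $e_a,e_b\ne e_k$. The plan is to run Proposition~\ref{prop:intersection} on the pairs $(S_r,S_p)$ and $(S_r,S_q)$ and exploit the hypothesis $|S_p|,|S_q|\ge 3$ (equivalently $|S_p|+|S_q|=n+2$) to cut the case list down.

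For $(S_r,S_p)$, case (i) is ruled out by $e_a$, and the branch $S_r\subseteq S_p$ of case (ii) is ruled out by $e_b$. Case (iv) demands $|S_r\cap S_p|=n-1\le|S_p|\le n+2-|S_q|\le n-1$, which forces $|S_p|=n-1$, $|S_q|=3$, and $S_p\subseteq S_r$, collapsing into the surviving branch of (ii). So either $S_p\subseteq S_r$, or case (iii) holds, meaning $|S_r\cap S_p|=1$ and $S_r\cup S_p=E$. The identical dichotomy applies to $(S_r,S_q)$ by symmetry, leaving four combinations to dispose of.

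If $S_p\subseteq S_r$ and $S_q\subseteq S_r$, then $S_r=E$; Proposition~\ref{prop:same-for-father-n-child} applied to $(p,r)$ and to $(q,r)$ (whose hypotheses hold because $|S_p|,|S_q|\ge 2$) pins every coefficient of $r$ on $E\setminus\{e_k\}$ to equal $1$, so that $r=\lambda_k e_k+\sum_{i\ne k}e_i=(\lambda_k-1)e_k$ by $\sum_i e_i=0$, making $r$ collinear with $e_k$ and contradicting $S_r=E$. If $S_p\subseteq S_r$ while $(S_r,S_q)$ is in case (iii), then $e_k\in S_r\cap S_q$ together with $|S_r\cap S_q|=1$ forces $S_r\cap S_q=\{e_k\}$; since $e_b\in S_r\cap S_q$ this gives $e_b=e_k$, contradicting $e_b\in S_q\setminus S_p$. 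The mirror case $S_q\subseteq S_r$ with $(S_r,S_p)$ in (iii) is symmetric. Finally, if both pairs fall in case (iii), then $S_r\cup S_p=E$ yields $S_q\setminus\{e_k\}\subseteq S_r$, so $|S_r\cap S_q|\ge|S_q|-1\ge 2$, contradicting $|S_r\cap S_q|=1$.

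The main obstacle is the reduction in the second step: recognising that case (iv) of Proposition~\ref{prop:intersection} is forced into containment by the size constraints $|S_p|,|S_q|\ge 3$ and $|S_p|+|S_q|=n+2$. Once this reduction is noted, each of the four remaining cases closes via Proposition~\ref{prop:same-for-father-n-child} and the identity $\sum_i e_i=0$, with no need to invoke the bad-position technique of Section~\ref{sec:main-technique}.
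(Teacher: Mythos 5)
Your proof is correct, and it runs on the same two ingredients as the paper's: Proposition~\ref{prop:intersection} applied to the pairs $(r,p)$ and $(r,q)$, plus the count $|S_p|+|S_q|=|S_p\cup S_q|+|S_p\cap S_q|=n+2$, which together with $|S_p|,|S_q|\ge 3$ gives $|S_p|,|S_q|\le n-1$ and is what kills (or, in your version, collapses) case (iv). The difference is organizational. The paper first notes $S_r\ne E$ (a support has at most $n$ elements, as remarked after Definition~\ref{def:support}), so without loss of generality $S_q\not\subseteq S_r$, and then only two cases remain: $|S_r\cap S_q|=1$ forces $|S_p\cap S_r|=|S_p|-1$, strictly between $1$ and $n-1$, contradicting Proposition~\ref{prop:intersection} for $(p,r)$; and $|S_r\cap S_q|=n-1$ contradicts $|S_r\cap S_q|<|S_q|\le n-1$. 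You instead treat the two pairs symmetrically and dispose of four combinations; your last three close by the same kind of element- and cardinality-chasing, while your first combination ($S_p,S_q\subseteq S_r$, i.e.\ $S_r=E$) is settled by a coefficient computation through Proposition~\ref{prop:same-for-father-n-child} together with $\sum_i e_i=0$ (and tacitly Proposition~\ref{prop:support-insertecting-one-point->strange-index}, or at least Proposition~\ref{prop:almost-homo}, to know the coefficients off $e_k$ are all $1$). That computation does yield the contradiction, but it is a detour: $S_r=E$ is impossible outright because supports never have $n+1$ elements, which is the shortcut the paper takes at the outset and which would also let you drop the appeal to Proposition~\ref{prop:same-for-father-n-child} entirely. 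In short, your route is a valid, slightly longer rearrangement of the same case analysis; the paper's WLOG reduction buys a two-case proof, your symmetric treatment buys uniformity at the cost of one superfluous case.
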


\begin{proposition} \label{prop:big-intersection-constrain-others}
	Given two points $p,q$ in $X$ with $S_p\cup S_q=E, |S_p\cap
	S_q|=n-1$ and a point $r\in X$ with $S_r\ne S_p, S_r\ne S_q$
	and $|S_r|\ge 2$, then $S_r$ is either a subset of $S_p\cap
	S_q$ or $S_r=(S_p\setminus S_q)\cup (S_q\setminus S_p)$.
\end{proposition}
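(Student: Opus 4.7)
The plan is to argue by contradiction. Write $\{e_a\}=S_p\setminus S_q$ and $\{e_b\}=S_q\setminus S_p$, so that $S_p\cap S_q=E\setminus\{e_a,e_b\}$. By Proposition~\ref{prop:2-strange-indices} applied to $p,q$, the coefficient of $e_a$ in $p$ and of $e_b$ in $q$ both exceed $1$, while Proposition~\ref{prop:almost-homo} forces all other coefficients in $p$ and $q$ to equal $1$; in particular $e_b$ is the strange index of $q$. Now suppose for contradiction that $S_r$ is neither contained in $S_p\cap S_q$ nor equal to $\{e_a,e_b\}$. Then $S_r$ meets $\{e_a,e_b\}$, and I split cases by how many of $e_a,e_b$ it contains.

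Case 1: $S_r$ contains exactly one of $e_a,e_b$, say $e_a$ (by the symmetry exchanging $(p,e_a)\leftrightarrow(q,e_b)$). Then $S_r\subseteq E\setminus\{e_b\}=S_p$, properly since $S_r\neq S_p$. Comparing $S_r$ with $S_q$ via Proposition~\ref{prop:intersection}: containment is ruled out by $e_a\in S_r\setminus S_q$; disjointness would force $S_r\subseteq\{e_a\}$, violating $|S_r|\ge 2$; and $|S_r\cap S_q|=n-1$ would give $|S_r|=n$, hence $S_r=S_p$ since $S_r\subseteq S_p$. The remaining alternative is $S_r\cup S_q=E$ with $|S_r\cap S_q|=1$, forcing $|S_r|=2$ and $S_r=\{e_a,e_c\}$ for some $e_c\in S_p\cap S_q$. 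For $n\ge 3$, Proposition~\ref{prop:support-insertecting-one-point->strange-index} applied to the pair $r,q$ with $|S_r|=2$ and $|S_q|=n\ge 3$ then forces the strange index of $q$ to be $e_c$, contradicting that it equals $e_b\neq e_c$. For $n=2$, the positive hulls of the pairs in $E$ already tile $\mathbb R^2$, so every support has size at most $2$, and the only $2$-element subset of $E$ different from $S_p$ and $S_q$ is $\{e_a,e_b\}$, making Case~1 vacuous.

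Case 2: $\{e_a,e_b\}\subsetneq S_r$. Then $S_r$ contains some $e_c\in S_p\cap S_q$. Apply Proposition~\ref{prop:intersection} to $r,p$: non-containment follows from $e_b\in S_r\setminus S_p$; and $|S_r\cap S_p|\ge 2$, witnessed by $e_a$ and $e_c$, rules out the empty-intersection and single-point-intersection cases. The only surviving option is $S_r\cup S_p=E$ with $|S_r\cap S_p|=n-1$, which forces $|S_r|=n$ and $S_r=E\setminus\{e_x\}$ for some $e_x\in S_p\cap S_q$. But then $r,q$ themselves satisfy the hypothesis of Proposition~\ref{prop:2-strange-indices}, with $\{e_x\}=S_q\setminus S_r$, so the coefficient of $e_x$ in $q$ must exceed $1$. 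Since $e_x\neq e_b$ is not the strange index of $q$, its coefficient is $1$, a contradiction.

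The main obstacle is the combinatorial bookkeeping: correctly enumerating how $S_r$ can sit relative to the tripartition $\{e_a\}\sqcup\{e_b\}\sqcup(S_p\cap S_q)$ of $E$, and matching each subcase with the appropriate earlier proposition. The unifying thread is that any admissible $S_r$ straddling $\{e_a,e_b\}$ would force the strange index of $q$ (via Proposition~\ref{prop:support-insertecting-one-point->strange-index} in Case~1 or Proposition~\ref{prop:2-strange-indices} in Case~2 applied to $r,q$) to lie in $S_p\cap S_q$, while the original pair $p,q$ pins it down as $e_b\notin S_p\cap S_q$.
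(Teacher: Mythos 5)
Your proposal is correct and takes essentially the same route as the paper's proof: Proposition~\ref{prop:intersection} pins $S_r$ down to either a two-element support meeting $\{e_a,e_b\}$ in one point or an $n$-element support, and in each case the uniqueness of the strange index of $p$ or $q$ (via Propositions~\ref{prop:support-insertecting-one-point->strange-index} and~\ref{prop:2-strange-indices}) yields the contradiction. The only difference is organizational: you split cases by how $S_r$ meets $\{e_a,e_b\}$, whereas the paper splits by the size of $S_r$ (first excluding $|S_r|=n$, then $2<|S_r|<n$, then treating $|S_r|=2$), and you apply Proposition~\ref{prop:2-strange-indices} to the pair $(q,r)$ where the paper uses $(p,r)$ --- immaterial by symmetry.
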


\begin{proposition} \label{prop:small-intersect-big}
	Given two points $p,q$ in $X$ with $|S_p\cap S_q|=1$,
	$|S_p|=2$, $|S_q|=n$ and a point $r\in X$ with $S_r\ne S_p,
	S_r\ne S_q$ and $|S_r|\ge 2$, then $S_r$ is either a subset of
	$S_q\setminus S_p$ or $S_r=(S_p\setminus S_q)\cup
	(S_q\setminus S_p)$.
\end{proposition}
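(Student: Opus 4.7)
The plan is to combine two case splits via Proposition \ref{prop:intersection} with the strange-index constraints from Propositions \ref{prop:support-insertecting-one-point->strange-index} and \ref{prop:2-strange-indices}. Write $\{e_a\}=S_p\cap S_q$ and $\{e_b\}=S_p\setminus S_q$, so that $S_p=\{e_a,e_b\}$ and $S_q=E\setminus\{e_b\}$; the two target conclusions then read $S_r\subseteq E\setminus\{e_a,e_b\}$ and $S_r=E\setminus\{e_a\}$.

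First I apply Proposition \ref{prop:intersection} to the pair $(S_p,S_r)$. The disjoint case directly gives $S_r\subseteq E\setminus\{e_a,e_b\}$. The containment case $S_r\subseteq S_p$ collapses to $S_r=S_p$ via $|S_r|\ge 2=|S_p|$, which is excluded. The case $S_p\cup S_r=E$ with $|S_p\cap S_r|=1$ splits according to which of $e_a,e_b$ sits in $S_r$, yielding either $S_r=E\setminus\{e_b\}=S_q$ (excluded) or $S_r=E\setminus\{e_a\}$ (the second target). The case $|S_p\cap S_r|=n-1$ is ruled out for $n\ge 3$ by $|S_p\cap S_r|\le|S_p|=2$ combined with $|S_r|\le n$, while for $n=2$ it coincides with the previous case. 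This funnels everything into the remaining subcase $S_p\subsetneq S_r$, where both $e_a$ and $e_b$ lie in $S_r$; this subcase is already vacuous for $n\le 2$ since then $|S_r|\le n=|S_p|$ would force $S_r=S_p$.

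For the remaining subcase I apply Proposition \ref{prop:intersection} a second time to $(S_q,S_r)$. The intersection contains $e_a$, so they are not disjoint; $e_b\in S_r\setminus S_q$ rules out $S_r\subseteq S_q$, while $|S_r|\le n=|S_q|$ together with $S_r\ne S_q$ rules out $S_q\subseteq S_r$; the option $|S_q\cap S_r|=1$ forces $S_r=\{e_a,e_b\}=S_p$, excluded. Hence $S_q\cup S_r=E$ and $|S_q\cap S_r|=n-1$, so $S_r=E\setminus\{e_c\}$ for some $e_c\in E\setminus\{e_a,e_b\}$.

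The contradiction comes from pinning down the strange index of $q$ in two incompatible ways. Proposition \ref{prop:2-strange-indices} applied to $(q,r)$ forces the strange index of $q$ to be the unique element of $S_q\setminus S_r$, namely $e_c$. On the other hand, since $S_p\cup S_q=E$, $S_p\cap S_q=\{e_a\}$ and $|S_p|=2$, the second clause of Proposition \ref{prop:support-insertecting-one-point->strange-index} forces the strange index of $q$, whenever it exists, to be $e_a\ne e_c$. The main obstacle worth flagging is low-dimensional bookkeeping: Propositions \ref{prop:support-insertecting-one-point->strange-index} and \ref{prop:2-strange-indices} are stated only for $n\ge 3$, but as noted the subcase where they are invoked is itself vacuous for $n\le 2$, so no separate argument is needed there.
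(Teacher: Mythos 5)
Your proof is correct and follows essentially the same route as the paper: after narrowing down the possibilities for $S_r$ by repeated appeal to Proposition \ref{prop:intersection} (you organize the cases by the intersection pattern of $S_r$ with $S_p$ and then $S_q$, the paper by the size of $S_r$, but the content is the same), you reach the identical final contradiction, pinning the strange index of $q$ to $S_q\setminus S_r$ via Proposition \ref{prop:2-strange-indices} while Proposition \ref{prop:support-insertecting-one-point->strange-index} forces it to be the point of $S_p\cap S_q$. Your remark that the low-dimensional cases $n\le 2$ are vacuous where those propositions are invoked correctly handles the $n\ge 3$ hypotheses.
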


We show that the above necessary requirements are actually sufficient.

\begin{theorem} \label{thm:k=1}
	If a set $X$ satisfies the conditions in all above
	propositions, then $X$ also satisfies the condition in
	Problem \ref{prob:k=1}.
\end{theorem}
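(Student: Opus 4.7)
The plan is to argue by contradiction: suppose some $n+1$ points $p_0,\dots,p_n\in X$ are in conical (bad) position, and derive a contradiction from the structural propositions. The object of study is the family $\mathcal{F}=\{S_{p_0},\dots,S_{p_n}\}$ of supports. By Proposition~\ref{prop:intersection}, each pair in $\mathcal{F}$ falls into one of four configurations (disjoint, nested, or one of the two $E$-covering shapes), and Propositions~\ref{prop:two-big-supports-intersecting-at-one-point}--\ref{prop:small-intersect-big} make the $E$-covering shapes so rigid that at most one $E$-covering pair can occur in $\mathcal{F}$, and when it does the remaining supports are confined to a handful of admissible shapes.

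The bulk of the argument is the laminar case, in which no $E$-covering pair occurs. Then $\mathcal{F}$ is a forest under containment; let $T_1,\dots,T_k$ be its roots, pairwise disjoint subsets of $E$, and let $C_j\subseteq\{p_0,\dots,p_n\}$ collect the points whose support lies in $T_j$, so the $C_j$ partition the $n+1$ points. Using Proposition~\ref{prop:almost-homo} (coefficients are $1$ except possibly one strange index), Proposition~\ref{prop:same-for-father-n-child} (coefficients on a child support match those on its parent) and Proposition~\ref{prop:same-support-same-strange-index} (a strange index is shared among points with the same support), I would prove an analogue of Lemma~\ref{lem:presentation-by-many-points} showing that whenever $|C_j|\ge|T_j|$ the points in $C_j$ are already in a good position. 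Since a good position inside any one $C_j$ upgrades to a good position of the whole collection, we must have $|C_j|\le|T_j|-1$ for every $j$. But then $n+1=\sum_j|C_j|\le\sum_j(|T_j|-1)\le|E|-k=n+1-k$, forcing $k\le 0$, a contradiction.

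It remains to dispose of the $E$-covering cases. When a pair $S_p,S_q\in\mathcal{F}$ with $|S_p\cap S_q|=n-1$ appears, Proposition~\ref{prop:big-intersection-constrain-others} restricts every other support in $\mathcal{F}$ to be either a subset of $S_p\cap S_q$ or equal to the two-element set $(S_p\setminus S_q)\cup(S_q\setminus S_p)$, and Proposition~\ref{prop:2-strange-indices} pins down the strange-index coefficients of $p,q$ through $\lambda_a\theta_b\ge\lambda_a+\theta_b$. Applying the branch analysis of the laminar case to the rest of $\mathcal{F}$ inside $S_p\cap S_q$ and exploiting this inequality, I would combine $p$, $q$ and the remaining points into an explicit linear combination that is either a nonnegative relation summing to $0$ or presents one of the $p_i$ as a nonnegative combination of the others. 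The $|S_p\cap S_q|=1$ case is treated analogously using Propositions~\ref{prop:two-big-supports-intersecting-at-one-point}, \ref{prop:support-insertecting-one-point->strange-index}, and \ref{prop:small-intersect-big}.

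The hard part will be isolating the exact form of the analogue of Lemma~\ref{lem:presentation-by-many-points} in the laminar case. The skeleton version in Section~\ref{sec:reduction} relied on the special vertex singled out by \emph{involvement}; here the strange index from Proposition~\ref{prop:almost-homo} plays a similar but more delicate role, and its propagation through the forest of supports (via Proposition~\ref{prop:same-for-father-n-child}) must be tracked when constructing the required nonnegative combination that exhibits good position. The $E$-covering cases add a second, largely orthogonal, case analysis that nonetheless reuses the same mechanism and therefore should not pose additional structural difficulty once the laminar analogue is in place.
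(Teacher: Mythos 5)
Your laminar case rests on a lemma that is false as stated. You claim that whenever $|C_j|\ge|T_j|$ the points of $C_j$ are already in a good position. A singleton root $T_j=\{e_0\}$ supporting the single point $e_0$ already violates this (one nonzero point is separated from $0$ and is not in the positive hull of the empty remainder, so it is in conical position); a less degenerate counterexample is a root $T_j=\{e_0,e_1\}$ supporting the two points $e_0$ and $p=\lambda_0e_0+e_1$ with $\lambda_0>1$, a configuration compatible with every proposition (Proposition \ref{prop:same-support-same-strange-index} only constrains supports with more than two elements, and Proposition \ref{prop:same-for-father-n-child} requires the smaller support to have at least two elements), yet $\{e_0,p\}$ is in conical position. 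The correct statement — the paper's Lemma \ref{lem:lemma-for-k=1} — is strictly weaker: when a tree supports exactly $|T_j|$ points you only get a representation of $\sum_{e\in T_j}e$ as a combination of those points with at most one negative coefficient, and a good position inside a single tree is guaranteed only when the tree supports strictly more than $|T_j|$ points. Consequently your counting step $|C_j|\le|T_j|-1$, hence $n+1\le n+1-k$ and $k\le 0$, is not available. The true accounting gives only $|C_j|\le|T_j|$, so equality holds for every tree and the disjoint roots partition $E$; the good position must then be produced globally, by adding the per-tree representations to obtain a relation among the $n+1$ points whose value is $\sum_{e\in E}e=0$, and here Proposition \ref{prop:disjoint-supports->at-most-1-has-strange-index} is essential to ensure that at most one negative coefficient survives in the combined relation (this packaging is the paper's Corollary \ref{cor:the-corollary}). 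Your outline never performs this combination, and without it the laminar case is not closed.

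The non-laminar cases are also heavier than you allow. It is not true that at most one $E$-covering pair can occur: the paper's Case 4 has roots $\{e_0,\dots,e_{n-1}\}$, $\{e_1,\dots,e_n\}$ and additionally $\{e_0,e_n\}$, which must be killed by an explicit relation $up+vq+tr=0$ with a sign check on the determinant. In the remaining covering cases the argument genuinely branches: on whether a root supports one or two points, on whether the shared vertex $e_k$ (or $e_0$, $e_1$ in the $|S_1|=2$ subcase) occurs among the chosen $n+1$ points, and it reapplies Corollary \ref{cor:the-corollary} to the sub-forest inside $S_1\setminus\{e_k\}$ (or $\{e_2,\dots,e_n\}$, or $S_p\cap S_q$), after which the coefficient bounds from Propositions \ref{prop:2-strange-indices} and \ref{prop:support-insertecting-one-point->strange-index} — in particular $\lambda_a\theta_b\ge\lambda_a+\theta_b$ — are exactly what makes the final coefficient of the auxiliary combination $L$ nonnegative. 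Your sketch points at the right propositions but defers these computations; together with the false laminar lemma, that is where the actual content of the proof lies.
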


\begin{proof}
	At first, we assme that $n\ge 3$ since it is known that for
	$n=1,2$, every segment/polygon is strongly monotypic.

	Take any $n+1$ points of $X$, consider the graph whose
	vertices are the supports of the points (each support presents
	only once, even if it supports multiple points).  There is a
	direct edge from $S_1$ to $S_2$ if and only if $S_1\subset
	S_2$.  This graph is actually a partial order, but for
	convenience, we remove some edges if necessary to obtain a
	forest in the way that: If a vertex $v$ is a subset of
	multiple vertices, then $v$ is connected to precisely one of
	them.

	We should first note that a vertex other than a root cannot
	support more than one point (by Proposition
	\ref{prop:same-for-father-n-child}).

	From now on in this proof, we only treat the $n+1$ considered
	points instead the whole $X$.

	\begin{lemma} \label{lem:lemma-for-k=1}
		If a tree supports \footnote{In case of confusion, the
		points supported by a tree are those supported by its
		vertices.} at least as many points as the cardinality
		of the root $S$ then either (i) one can obtain the sum
		of the elements in $S$ by a linear combination of the
		points with at most one negative coefficient (zero
		negative coefficients when $S$ has no strange index),
		or (ii) one of the points is in the positive hull of
		the others.  If the number of points supported by the
		tree is strictly greater than the number of elements
		of $S$, then we always have (ii).
	\end{lemma}
	\begin{proof}
		The statement is obviously true when $S$ contains only
		one point.

		Also, it is true when $|S|=2$.  Indeed, if there are
		two points $x,y$ in $\pos\{e_0,e_1\}$.  The desired
		sum $e_0+e_1$ is in the middle of the positive hull.
		Therefore, regardless of the position of $x,y$, the
		sum is always obtained by a linear combination of the
		two linearly independent vector $x,y$.  Note that the
		coefficients are not all negative, since otherwise,
		$e_0+e_1$ must be in the other quadrant
		$-\pos\{e_0,e_1\}$.  Also, when there is no strange
		index, $x,y$ are actually $e_0,e_1$, hence, there is
		no negative coefficient.  If there are $3$ points then
		one of them is in the positive hull of the other two
		points since one vector is in the angle formed by the
		other two vectors.

		We now prove the lemma for any $|S|\ge 3$ with the
		assumption that the lemma holds for $S$ of lower
		cardinalities.

		(a) Consider the case the tree supports exactly as
		many points as the cardinality of $S$.

		When no point has a strange index, we obtain (i) with
		the linear combination of only one summand which is
		the point supported by $S$.  We only consider the case
		we have a strange index hereafter.  Note that the
		strange index must be unique due to Propositions
		\ref{prop:same-for-father-n-child} and
		\ref{prop:same-support-same-strange-index}.  (Note
		that Proposition
		\ref{prop:same-support-same-strange-index} is the
		reason why we also consider $|S|=2$ in the base case
		of induction.)

		If there are $2$ points whose support is $S$, then the
		strange indices, if both have, are the same by
		Proposition
		\ref{prop:same-support-same-strange-index}.  Let the
		strange index be $s_k$, and the two points be
		$p=\sum_{e_i\in S} \lambda_i e_i$, $q=\sum_{e_i\in S}
		\theta_i e_i$ with $\lambda_k>\theta_k$, we then have
		\[
			\frac{\lambda_k-1}{\lambda_k-\theta_k} q -
			\frac{\theta_k-1}{\lambda_k-\theta_k}p =
			\sum_{e_i\in S} e_i.
		\]

		We now treat the case when only one point
		$p=\sum_{e_i\in S}\lambda_i e_i$ is supported by $S$.

		We have to consider only the case when no subtree
		supports more points than the cardinality of the root
		of that subtree, otherwise we have (ii) already by the
		induction hypothesis.  Such a situation implies that
		every subtree supports precisely as many points as the
		cardinality of the root of that subtree.  Also, the
		union of the children of a vertex $S'$ must be either
		$S'$ or $S'\setminus\{e_j\}$ for some $e_j\in S'$.  We
		only consider the latter case since in the former case
		the point supported by $S'$ is the sum of the points
		supported by the children.

		Let $e_j$ be the element of $S$ not in the children of
		$s$.  We consider the following cases.

		\emph{Case 1}: the point $e_j$ is the strange index.

		Let $S_1,\dots,S_m$ be the children of $S$, which
		support the points $q_1,\dots,q_m$.  Since no strange
		index is in any $S_t$, each $q_t$ is the sum of the
		elements in $S_t$.  It follows that
		\[
			(1-\frac{1}{\lambda_j}) \Bigl(\sum_{t=1}^m q_t\Bigr) +
			\frac{1}{\lambda_j} p = \sum_{e_i\in S} e_i.
		\]

		\emph{Case 2}: some $e_k$ other than $e_j$ is the
		strange index.

		Consider the smallest subtree whose root contains
		$e_k$.  If that subtree is rooted by $T$, then the
		union of the children, say $T_1,\dots, T_{m'}$, is
		$T\setminus\{e_k\}$.  (Note that we also cover the
		case the list of children is empty when $T=\{e_k\}$.)
		Let $q_1,\dots,q_{m'}$ be the points supported by
		these children, each $q_t$ is then the sum of the
		elements in $T_t$.  Let $q=\sum_{e_i\in T} \theta_i
		e_i$ be the point supported by $T$, we have
		\[
			p+\frac{\lambda_k-1}{\theta_k}\Bigl(\Bigl(\sum_{t=1}^{m'}
			q_t\Bigr) - q\Bigr) = \sum_{e_i\in S} e_i.
		\]

		(b) Consider the case the tree supports more points
		than the cardinality of $S$.

		If there are $3$ points whose support is $S$ then one
		of them is in the positive hull of the other two
		points (left as a small exercise for the readers).

		Consider the case when there are only two points $p,q$
		whose support is $S$.  The strange index $e_j$ cannot
		be in any of the children of $S$ (by Proposition
		\ref{prop:same-for-father-n-child}).  We assume that
		no subtree supports more points than the cardinality
		of its root, since otherwise we can apply the
		induction hypothesis.  Like the situation in (a), we
		also need to cover only the case the union of the
		children of $S$ is $S\setminus\{e_j\}$.  Let the point
		supported by the children be $x_1,\dots,x_m$.  Also,
		let $p=\sum_{e_i\in S} \lambda_i e_i$, $q=\sum_{e_i\in
		S} \theta_i e_i$ with $\lambda_j>\theta_j$, we then
		have
		\[
			q=\frac{\theta_j}{\lambda_j} p +
			(1-\frac{\theta_j}{\lambda_j}) \sum_{t=1}^m
			x_t.
		\]
		
		If there is only $1$ point supported by $S$, then we
		just need to consider the case one of the subtrees
		satisfies the requirement (b), as otherwise the union
		of the children of $S$ is $S$, which is trivial.  By
		applying induction hypothesis to the subtree, we have
		Conclusion (ii).
	\end{proof}
	
	With the fact that two disjoint supports cannot have two
	strange indices (by Proposition
	\ref{prop:disjoint-supports->at-most-1-has-strange-index}), we
	have the following corollary.
	\begin{corollary} \label{cor:the-corollary}
		If a forest with disjoint roots supports at least as
		many points as the cardinality of the union of the
		roots then either (i) one can obtain the sum of all
		the elements in the union of the roots by a linear
		combination of the points with at most one negative
		coefficient (zero negative coefficients when no root
		has a strange index), or (ii) one of the points is in
		the positive hull of the other.  If the number of
		supported points is strictly greater than the
		cardinality, then we always have (ii).
	\end{corollary}

	If a tree of the forest supports more points than the
	cardinality of the root then we are done, by Conclusion (ii)
	of Lemma \ref{lem:lemma-for-k=1}.

	Consider the remaining case that each tree supports at most as
	many points as the cardinality of the root.  The following
	cases are exclusive due to Propositions
	\ref{prop:intersection},
	\ref{prop:big-intersection-constrain-others} and
	\ref{prop:small-intersect-big}.

	\emph{Case 1}: The roots are disjoint (and the union is $E$).

	Applying Corollary \ref{cor:the-corollary} to the forest, we
	have a good position with either Conclusion (i) or Condition
	(ii).  (Note that the sum of the elements in the union $E$ is
	$0$.)
	
	\emph{Case 2}: There are precisely two roots $S_1, S_2$
	intersecting at only some $e_k$ and the union is $E$.

	Subcase (a): $|S_1|,|S_2| \ge 3$.

	If $e_k$ is among the $n+1$ points, then $e_k$ is in the
	positive hull of the two points $p=\sum_{e_i\in S_1} \lambda_i
	e_i$, $q=\sum_{e_i\in S_2} \theta_i e_i$ that are supported by
	$S_1, S_2$, since
	\[
		p+q-(\lambda_k+\theta_k-1)e_k = \sum_{e_i\in E} e_i = 0.
	\]
	(Note that the strange index of any point of $p,q$, if any, is
	$e_k$, by Proposition
	\ref{prop:support-insertecting-one-point->strange-index}.)

	Consider the case there is no $e_k$ among the $n+1$ points,
	then each support other than the two roots is either a subset
	of $S_1\setminus\{e_k\}$ or $S_2\setminus\{e_k\}$, by
	Propositions \ref{prop:intersection} and
	\ref{prop:two-big-supports-intersecting-at-one-point}.

	One of the two trees supports as many points as the
	cardinality of the root, since otherwise the total number of
	points is less than $n+1$.  Let it be the case for $S_1$.

	If there are two points supported by $S_1$, say
	$p=\sum_{e_i\in S_1} \lambda_i e_i, p'=\sum_{e_i\in S_1}
	\lambda'_i e_i$, let a point supported by $S_2$ be
	$\sum_{e_i\in S_2} \theta_i e_i$, the linear relation
	\[
		up + u'p' + q = \sum_{e_i\in E} e_i = 0,
	\]
	where $u,u'$ satisfy $u+u'=1$ and $\lambda_k u + \lambda'_k u'
	+ \theta_k = 1$ (with $\lambda_k\ne\lambda'_k$), shows that
	the three points $p,p',q$ are in a good position (any three
	points on the same plane through $0$ are in a good position).

	If there is only one point $p=\sum_{e_i\in S_1} \lambda_i e_i$
	supported by the root $S_1$, excluding $p$ and $S_1$ from its
	tree, there remains a forest with at least as many points as
	the cardinality of the union of the roots.  The union of the
	roots of the remaining forest is a subset of
	$S_1\setminus\{e_k\}$.  We are done if the union is a proper
	subset, since we have Conclusion (ii) of Corollary
	\ref{cor:the-corollary} then.  It remains to consider the case
	the union is precisely $S_1\setminus\{e_k\}$.  Also by
	applying Corollary \ref{cor:the-corollary} to the forest with
	no strange index, we have a linear combination $L$ of the
	remaining points with nonnegative coefficients whose value is
	the sum of the elements in $S_1\setminus\{e_k\}$.  The
	relation
	\[
		q-\frac{\theta_k-1}{\lambda_k} p +
		(1+\frac{\theta_k-1}{\lambda_k}) L = \sum_{e_i\in E}
		e_i = 0
	\]
	shows a good position of the points.
	
	Subcase (b): one of $|S_1|, |S_2|$ is $2$, say $|S_1|=2$.

	Suppose $S_1=\{e_0,e_1\}$, $S_2=\{e_1,\dots,e_n\}$.  By
	Proposition
	\ref{prop:support-insertecting-one-point->strange-index}, the
	strange index of $S_2$, if it exists, is $e_1$.  However, the
	strange index of $S_1$, if it exists, can be any of $e_0,e_1$.

	Consider the case $S_1$ supports two points $p=\sum_{e_i\in
	S_1} \lambda_i e_i$ and $p'=\sum_{e_i\in S_1} \lambda'_i e_i$,
	the linear relation
	\[
		up+u'p'+q=\sum_{e_i\in E} e_i = 0,
	\]
	where $u,u'$ satisfy $u\lambda_0+u'\lambda'_0=1,
	u\lambda_1+u'\lambda'_1+\theta_1=1$, shows a good position of
	$p,p',q$.

	Consider the case $S_2$ supports two points $q=\sum_{i=1}^n
	\theta_i e_i$ and $q'=\sum_{i=1}^n \theta'_i e_i$, the linear
	relation
	\[
		uq+u'q'+\frac{1}{\lambda_0}p=\sum_{e_i\in E} e_i = 0,
	\]
	where $u,u'$ satisfy $u+u'=1,
	u\theta_1+u'\theta'_1+\frac{\lambda_1}{\lambda_0}=1$, shows a
	good position of $q,q',p$.

	It remains to consider the case there are no two points both
	supported by either $S_1$ or $S_2$.

	If $e_0$ is among the considered points then the linear
	relation
	\[
		up+ve_0+q=\sum_{e_i\in E} e_i=0,
	\]
	where $u,v$ satisfy $u\lambda_0+v=1$ and
	$u\lambda_1+\theta_1=1$, shows a good position of $u,e_0,q$.

	If $e_1$ is among the considered points then the linear
	relation
	\[
		up+ve_1+q=\sum_{e_i\in E} e_i=0,
	\]
	where $u\lambda_0=1$ and $u\lambda_1 + v + \theta_1= 1$, shows
	a good position of $p,e_1,q$.

	If neither $e_0$ nor $e_1$ is among the points then all other
	points than $p,q$ are supported by subsets of
	$\{e_2,\dots,e_n\}$ (by Proposition
	\ref{prop:small-intersect-big}).  Applying Corollary
	\ref{cor:the-corollary} to these $n-1$ remaining points, one
	obtains a good position already (Conclusion (ii)) or otherwise
	a linear combination $L$ of nonnegative coefficients whose
	result is the sum of $e_2,\dots,e_n$.  The linear relation
	\[
		\frac{1}{\lambda_0}p +
		\frac{1-\frac{\lambda_1}{\lambda_0}}{\theta_1}q +
		\Bigl(1-\frac{1-\frac{\lambda_1}{\lambda_0}}{\theta_1}\Bigr)
		L = \sum_{e_i\in E} e_i = 0
	\]
	shows a good position of the considered points.

	\emph{Case 3}: There are precisely two roots $S_1, S_2$
	intersecting at precisely $n-1$ points and their union is $E$.
	Suppose $S_1=\{e_0,\dots,e_{n-1}\}$ and
	$S_2=\{e_1,\dots,e_n\}$.

	Consider the case there are two points $p=\sum_{i=0}^{n-1}
	\lambda_i e_i$ and $p'=\sum_{i=0}^{n-1} \lambda'_i e_i$
	supported by $S_1$.  Let the point supported by $S_2$ be
	$q=\sum_{i=1}^n \theta_i e_i$.  By Proposition
	\ref{prop:2-strange-indices}, we have
	$\lambda_i=\lambda'_i=\theta_i=1$ for $i=1,\dots,n-1$.  The
	linear relation
	\[
		up + u'p' + vq = \sum_{e_i\in E} e_i = 0,
	\]
	where $u,u',v$ satisfy $u\lambda_0 + u'\lambda'_0 = u+u'+v =
	v\theta_n=1$, shows that $p,p',q$ are in a good position.
	(Note that $\lambda_0\ne\lambda'_0$.)

	The case $S_2$ supports two points is treated likewise.

	It remains to check the case each $S_i$ supports only one
	point.

	As there is no other root, the support of every other point of
	the remaining $n-1$ points is a subset of
	$\{e_1,\dots,e_{n-1}\}$ (by Proposition
	\ref{prop:big-intersection-constrain-others}).  By Corollary
	\ref{cor:the-corollary}, we have either Conclusion (i) or
	Conclusion (ii).  We are done if it is Conclusion (ii).
	Otherwise, we can represent the sum of $e_1,\dots,e_{n-1}$ as
	a linear combination $L$ of the remaining $n-1$ points with
	nonnegative coefficient by Conclusion (i).  It follows that
	\[
		(\lambda_0\theta_n-\lambda_0-\theta_n) L + \theta_n p
		+ \lambda_0 q = \lambda_0\theta_n\sum_{e_i\in E} e_i =
		0,
	\]
	where the coefficient $\lambda_0\theta_n-\lambda_0-\theta_n$
	of $L$ is nonnegative by Proposition
	\ref{prop:2-strange-indices}.

	\emph{Case 4}: There are two roots $S_1,S_2$ as in Case 3 and
	additionally a root $S_0=\{e_0,e_n\}$.

	Let $r=\phi_0 e_0 + \phi_n e_n$ be the point supported by
	$S_0$, the relation
	\[
		up+vq+tr=\sum_{e_i\in E} e_i = 0,
	\]
	where $u,v,t$ satisfy $u\lambda_0+t\phi_0=1, u+v=1,
	v\theta_n+t\phi_n=1$, shows a good position of $p,q,r$.  (The
	readers can check that the determinant of the matrix
	corresponding to the system is $\phi_0\theta_n +
	\lambda_0\phi_n > 0$.)

	The four cases have covered all the situations, hence the
	conclusion follows.
\end{proof}

\section{Proof of Theorem \ref{thm:hadwiger-generating}}
\label{sec:hadwiger-generating}
The proof uses the characterization in Sections \ref{sec:reduction}
and \ref{sec:k=1}.  The readers just need to check the propositions
there.

At first, we revise an alternate formulation with illumination of
Hadwiger's conjecture whose exposition can be found in \cite[Chapter
VI]{boltyanski2012excursions}.

\begin{proposition}[Boltyanskii]
	A convex body $P$ can be covered by $h$ translates of
	$(1-\epsilon)P$ (for $\epsilon$ small enough) if and only if
	there exists a set $H$ of $h$ vectors such that for each point
	$x$ on the boundary of $P$, at least one vector $v\in H$ gives
	$x+v\in\inte P$.
\end{proposition}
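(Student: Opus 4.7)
The plan is to prove the two implications separately. By translating $P$ we may assume $0\in\inte P$, so that $(1-\epsilon)P\subseteq\inte P$ for every $\epsilon\in(0,1)$; this is harmless since both ``covered by $h$ translates of $(1-\epsilon)P$'' and ``illuminated by $h$ vectors'' are translation-invariant properties.

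For the direction $(\Rightarrow)$, suppose $P\subseteq\bigcup_{i=1}^{h}(a_i+(1-\epsilon)P)$. For each $x\in\partial P$ pick an index $i$ with $x\in a_i+(1-\epsilon)P$; then $x-a_i\in(1-\epsilon)P\subseteq\inte P$, so the set $H:=\{-a_1,\dots,-a_h\}$ satisfies $x+(-a_i)\in\inte P$, exhibiting it as an illuminating set.

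For the direction $(\Leftarrow)$, given an illuminating set $H=\{v_1,\dots,v_h\}$, I would first shorten each $v_i$ to $t_iv_i$ for a small $t_i\in(0,1]$; by convexity of $P$ together with $0\in\inte P$, this preserves the property $x+t_iv_i\in\inte P$ whenever $x+v_i\in\inte P$, and it keeps $v_i$ uniformly bounded. I would then set $a_i:=-(1-\epsilon)v_i$, so that $x\in a_i+(1-\epsilon)P$ becomes $\frac{x}{1-\epsilon}+v_i\in P$, which differs from the illumination condition $x+v_i\in\inte P$ only by the perturbation $\frac{\epsilon}{1-\epsilon}x$. The open sets $U_i:=\{x:x+v_i\in\inte P\}$ cover the compact set $\partial P$, so by a standard shrinking/compactness argument one extracts closed sets $C_i\subseteq U_i$ whose union still contains $\partial P$, together with a uniform illumination depth $\eta>0$ such that $\mathrm{dist}(x+v_i,\partial P)\ge\eta$ for every $x\in C_i$. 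Choosing $\epsilon$ small enough that $\frac{\epsilon}{1-\epsilon}\sup_{y\in P}|y|<\eta$ makes the perturbation harmless, so $\frac{x}{1-\epsilon}+v_i\in\inte P$ for every $x\in C_i$, and the boundary of $P$ is covered. For points in the deep interior of $P$ (those lying in $(1-\mu)P$ for some $\mu>0$), the same continuity argument applied to a thickening of each $C_i$ into $P$ covers them by the appropriate translate, as soon as $\epsilon$ is taken smaller than both the illumination and the thickening thresholds.

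The main obstacle is the direction $(\Leftarrow)$: one must choose a single $\epsilon$ that simultaneously (i) controls the illumination depth on every $C_i$ and (ii) absorbs every interior point of $P$ into some translate $a_i+(1-\epsilon)P$. The preliminary shortening of the $v_i$ is crucial here, since without it the perturbation term $\frac{\epsilon}{1-\epsilon}x$ could escape $P$ even for boundary points; once the $v_i$ are suitably small, the compactness of $\partial P$ produces the required uniform $\eta$, and the rest reduces to a continuity check.
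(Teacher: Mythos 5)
The paper itself does not prove this proposition (it cites \cite[Chapter VI]{boltyanski2012excursions}), so the comparison is with the standard argument, which is the route you take. Your forward direction is complete and correct. In the backward direction the skeleton is right (the identity $x\in -(1-\epsilon)v_i+(1-\epsilon)P \iff \frac{x}{1-\epsilon}+v_i\in P$, plus a uniform illumination depth $\eta$ obtained by compactness of $\partial P$), but the treatment of the interior --- which is exactly where the content of this lemma lies --- has a genuine gap. Covering $\partial P$ by the translates does not by itself cover $P$: for instance, the boundary of a square can be covered by four translates of a $(1-\epsilon)$-homothet none of which contains the center, so the interior needs its own argument. Your proposed mechanism, ``the same continuity argument applied to a thickening of each $C_i$,'' only covers a collar $P\setminus(1-\mu)P$ near the boundary; it cannot reach points far from $\partial P$, which lie in no neighborhood of the $C_i$. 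What actually handles the deep interior is a different observation: once the $v_i$ are shortened below the collar width $d$, every $x\in P$ with $\operatorname{dist}(x,\partial P)>d$ satisfies $\frac{x}{1-\epsilon}+v_i\in B(x,d)\subseteq P$ for \emph{every} $i$ (provided $\frac{\epsilon}{1-\epsilon}\max_{y\in P}|y|<d/2$), so each translate individually swallows the deep interior. Relatedly, your stated reason for the shortening is off: for boundary points the perturbation $\frac{\epsilon}{1-\epsilon}x$ is absorbed by the depth $\eta$ no matter how long the $v_i$ are; the shortening is needed precisely for the interior points just described.

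There is also an ordering issue you should resolve explicitly: you shorten the $v_i$ ``first,'' but how short they must be depends on quantities ($\eta$ and the collar width $d$, or your $\mu$) that are produced by the compactness argument only after the illuminating vectors are fixed, so as written the choices are circular. This is repairable: run the compactness argument with the original vectors $w_i$ to obtain $\eta,d>0$ such that every $x\in P$ with $\operatorname{dist}(x,\partial P)\le d$ has some $i$ with $B(x+w_i,\eta)\subseteq P$; then shorten to $v_i=sw_i$ with $s\max_i|w_i|<d/2$, observing that for $x\in P$ convexity gives $B(x+sw_i,s\eta)=(1-s)x+sB(x+w_i,\eta)\subseteq P$, so the uniform depth survives the shortening (note this step needs only $x\in P$ and convexity, not $0\in\inte P$); finally take $\frac{\epsilon}{1-\epsilon}\max_{y\in P}|y|<\min(s\eta,d/2)$. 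With these corrections your construction does give $P\subseteq\bigcup_i\bigl(-(1-\epsilon)v_i+(1-\epsilon)P\bigr)$, i.e. the standard proof of Boltyanskii's criterion.
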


Let $x$ be a point on the boundary of a polytope $P$.  The minimal
face containing $x$ is the intersection of some facets with the
hyperplanes $\{x: \langle a_i,x\rangle = c_i\}$.  The set of vectors
$v$ allows $x+\epsilon v\in\inte P$ (for some $\epsilon$ small enough)
is
\[
	\bigcap_i \{v: \langle a_i,v\rangle < 0\}.
\]
(Note that we use outward normals here.)

We can see that it is sufficient to find a set of vectors $H$ for the
vertices of $P$ only, instead of all the points on the boundary.

We give a trivial observation.
\begin{observation}
	Given a set of normals $\{a_i\}_i$ and $v\in \bigcap_i \{v:
	\langle a_i,v\rangle < 0\}$, if $a\in \pos\{a_i\}$ then
	$\langle a,v\rangle <0$.
\end{observation}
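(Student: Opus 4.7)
The plan is to expand $a$ in the positive hull presentation and use linearity of the inner product. Since $a\in\pos\{a_i\}$, I would write $a=\sum_i \lambda_i a_i$ with nonnegative coefficients $\lambda_i$, not all zero (as $a$ is a normal and therefore nonzero). Linearity then gives $\langle a,v\rangle=\sum_i \lambda_i \langle a_i,v\rangle$.

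Each summand $\lambda_i\langle a_i,v\rangle$ is nonpositive, since $\lambda_i\ge 0$ and by hypothesis $\langle a_i,v\rangle<0$ for every $i$. Because at least one coefficient $\lambda_j$ is strictly positive, the corresponding summand $\lambda_j\langle a_j,v\rangle$ is strictly negative. Adding a strictly negative quantity to a sum of nonpositive quantities yields a strictly negative total, so $\langle a,v\rangle<0$.

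There is no real obstacle here; the only subtlety worth flagging is the implicit use of $a\ne 0$, which is automatic under the paper's convention that the elements of a normal set are nonzero (and no element is a positive multiple of another). If one wished to state the observation without this convention, one would simply require $a\ne 0$ as a hypothesis.
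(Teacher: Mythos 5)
Your argument is correct and is exactly the standard (and evidently intended) one: the paper states this as a trivial observation without proof, and your expansion of $a=\sum_i\lambda_i a_i$ with nonnegative, not-all-zero coefficients followed by linearity of the inner product is the natural justification. Your remark about needing $a\ne 0$ (automatic since $a$ is a normal) is a fair point and does not affect correctness.
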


The above observation easily implies the following result.

\begin{lemma} \label{lem:inside-positive-hull}
	Given normals $a'_1,\dots,a'_m$ contained in the positive hull
	of normals $a_1,\dots,a_n$, then
	\[
		\bigcap_i \{v: \langle a'_i,v\rangle < 0\} \supseteq
		\bigcap_i \{v: \langle a_i,v\rangle < 0\}.
	\]
\end{lemma}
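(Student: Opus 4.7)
The plan is to prove the set inclusion by taking an arbitrary point $v$ of the right-hand side and showing that each functional $\langle a'_j,\cdot\rangle$ is strictly negative at $v$. This is essentially a uniform application of the observation preceding the lemma, so the main work is just to keep track of positivity carefully.

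Concretely, I would fix $v$ with $\langle a_k,v\rangle<0$ for every $k=1,\dots,n$. For each $j=1,\dots,m$, the hypothesis $a'_j\in\pos\{a_1,\dots,a_n\}$ supplies nonnegative scalars $\lambda_1,\dots,\lambda_n$ such that $a'_j=\sum_{k=1}^n\lambda_k a_k$. Since $a'_j$ is a normal vector and in particular nonzero, the coefficients cannot all vanish, so at least one $\lambda_k$ is strictly positive. Taking the inner product against $v$ then gives
\[
\langle a'_j,v\rangle=\sum_{k=1}^n\lambda_k\langle a_k,v\rangle,
\]
a sum of nonpositive terms at least one of which is strictly negative (the one with $\lambda_k>0$), hence $\langle a'_j,v\rangle<0$. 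As this holds for every $j$, the point $v$ lies in the left-hand side, which establishes the claimed inclusion.

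I do not foresee any real obstacle here. The only subtlety worth flagging is the nonvanishing of $a'_j$, which is what allows one to upgrade the weak inequality $\langle a'_j,v\rangle\le 0$ coming from $\lambda_k\ge 0$ to the strict inequality needed. The rest is just linearity of the inner product, and the statement then follows by invoking the observation cited just above the lemma.
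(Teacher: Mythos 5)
Your proof is correct and is essentially the same as the paper's: the paper derives the lemma directly from the observation stated just before it, which is exactly the computation you wrote out (expressing each $a'_j$ as a nonnegative combination of the $a_k$, using that a normal is nonzero to get a strictly positive coefficient and hence a strict inequality). No issues.
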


By the property of the skeleton, the union of the positive hulls of
$X'_1\cup\dots\cup X'_k$ over all $X'_i$ being a subset of $X_i$
obtained by excluding an element $x_i\in X_i$ is $\mathbb R^n$, that
is
\[
	\bigcup_{x_1\in X_1,\dots, x_k\in X_k} \pos (\bigcup_i
	X_i\setminus\{x_i\}) = \mathbb R^n.
\]

From each of $|X_1|\dots|X_k|$ such positive hulls of $n$ linearly
independent normals $a_i$, we pick any $v\in \bigcap_i \{v: \langle
a_i,v\rangle < 0\}$ to add to the desired set $H$.  Note that a vertex
is the intersection of the facets of $n$ primitive normals and no
other facets as otherwise there are $n+1$ normals in conical position
with the positive hull empty of other normals.  Using Lemma
\ref{lem:inside-positive-hull}, we can see that in order to finish the
proof, it suffices to show that every $n$ primitive normals are
contained in one of the $|X_1|\dots|X_k|$ positive hulls.

Assume otherwise, let $Y$ denote the set of the $n$ considered
normals, there is some $i$ so that the projection of $Y$ to the space
of $X_i$ is not contained in any positive hull of $|X_i|-1$ points of
$X_i$.  (From now on, we mean projection by the projection to the
space of $X_i$.) For convenience, for every two points $p,q$ in $Y$
with $S_{p'}\subset S_{q'}$ where $p',q'$ are the projections, we
discard $p$.  If the projections of multiple points are supported by
the same support, we keep only one of them in $Y$.  Also, we discard
every point whose projection is $0$.

\emph{Case 1}: The projections of every $2$ points of $Y$ have
disjoint supports.

If every point of $Y$ is in the space of $X_i$, then the sum of the
points in $Y$ is either $\sum_{e\in X_i} e=0$ (if there is no strange
index) or a positive multiple of the strange index (if it exists), by
Proposition \ref{prop:almost-homo}.

If there is some point $x\in Y$ not in the space of $X_i$, it follows
that $x\in\pos\{x_i,x_j\}$ for some $x_i\in X_i, x_j\in X_j$ (by
Proposition \ref{prop:in-pos-of-2}).

Consider the case that there is only one point $x$ in $Y\setminus\spn
X_i$.  The supports of the remaining points in $\spn X_i$ must be
subsets of $X_i\setminus\{x_i\}$ by Proposition
\ref{prop:restrict-support} (or actually also by the way of removing
points from $Y$ in the beginning) and they cover the whole
$X_i\setminus\{x_i\}$.  By Proposition
\ref{prop:all-equal-n-positive}, each point is the sum of the elements
in its supports.  Let $x=\alpha x_i + \beta x_j$, we have
\[
	\frac{1}{\alpha} x + \sum_{p\in Y\setminus\{x\}} p =
	\frac{\beta}{\alpha} x_j + \sum_{e\in X_i} e =
	\frac{\beta}{\alpha} x_j.
\]
This is a contradiction to the primitivity of $Y$ as its positive hull
contains $x_j$.

Consider the case there are two points in $Y\setminus\spn X_i$, say
the former $x$ and another point $y\in\pos\{y_i,y_j\}$ for some
$y_i\in X_i, y_j\in X_j$.  Since $x_i\ne y_i$ (by the way we remove
points from $Y$), we have $|X_i|=2$, by Proposition
\ref{prop:same-region-same-point}.  If $|X_j|=2$, then we have the
picture of the plane with four vectors $(0,1)$, $(0,-1)$, $(1,0)$,
$(-1,0)$ representing $X_i\cup X_j$.  As $x_i\ne y_i$, the two points
$x,y$ either have their convex hull containing $0$ or their positive
hull contains one of the four basis vectors.  If $|X_j|\ge 3$, we have
$x_j=y_j$ by Proposition \ref{prop:same-region-same-point}.  The
picture now is the the upper half-plane with $3$ vectors $(0,1)$,
$(-1,0)$, $(1,0)$ with $x_j=y_j=(0,1)$ and $X_i$ being the set of the
other two points.  As $x_i\ne y_i$, the positive hull of $x,y$
contains $(0,1)$.  In either case, we have a contradiction to the
primitivity of $Y$.

\emph{Case 2}: The supports of the projections of some $2$ points
$a,b$ of $Y$ are not disjoint.

As every point $x\in Y\setminus\spn X_i$ has the projection being the
same normal as some $x_i\in X_i$, the two points $a,b$ are actually in
the space of $X_i$.  Let $X_i=\{e_0,\dots,e_m\}$.  If $m=1$ or $m=2$,
then the two points $a,b$ are easily seen from the segment or the
plane to be not primitive.  We suppose $m\ge 3$.  It follows from
Proposition \ref{prop:intersection} that we need to consider the
following two situations.

Suppose $|S_a|=|S_b|=|X_i|-1$, say $S_a=\{e_0,\dots,e_{m-1}\},
S_b=\{e_1,\dots,e_m\}, a=\sum_{t=0}^{m-1}\lambda_t e_t,
b=\sum_{t=1}^m\theta_t e_t$, with
$\lambda_0>\lambda_1=\dots=\lambda_{m-1}=1$,
$1=\theta_1=\dots=\theta_{m-1}<\theta_m$ and $\lambda_0\theta_m\ge
\lambda_0+\theta_m$ (by Proposition \ref{prop:2-strange-indices}).
One obtains either $0\in\conv\{a,b\}$ or $e_m\in\pos\{a,b\}$ since
\[
	a+(\lambda_0-1)b=(\lambda_0\sum_{p\in S_a} p) - \lambda_0 e_m
	+ (\lambda_0-1)\theta_m e_m = ((\lambda_0-1)\theta_m -
	\lambda_0) e_m,
\]
where $(\lambda_0-1)\theta_m - \lambda_0\ge 0$ as $\lambda_0
\theta_m\ge \lambda_0+\theta_m$ (we have $0\in\conv\{a,b\}$ for the
equality).

Suppose $|S_a\cap S_b|=1$ with $|S_a|\le |S_b|$, say
$S_a=\{e_0,\dots,e_k\}, S_b=\{e_k,\dots,e_m\}, a=\sum_{t=0}^k\lambda_t
e_t, b=\sum_{t=k}^m\theta_t e_t$ with
$\theta_k\ge\theta_{k+1}=\dots=\theta_m$ (by Proposition
\ref{prop:support-insertecting-one-point->strange-index}).

If $1=\lambda_0=\dots=\lambda_{k-1}\le\lambda_k$, then
$e_k\in\pos\{a,b\}$ since
\[
	a+b=(\lambda_k+\theta_k)e_k.
\]

If $k=1$ and $\lambda_0>\lambda_1=1$, then $e_1\in\pos\{a,b\}$ since
\[
	\frac{1}{\lambda_0}a + b=(\frac{1}{\lambda_0}+\theta_1)e_1.
\]

In either of the two situations, we always have a contradiction to the
primitivity of $Y$.

All the cases were covered and we have finished the proof.

\section{Proofs of Propositions}
\label{sec:proofs-of-propositions}
This section presents the proofs of the propositions in Section
\ref{sec:reduction} and Section \ref{sec:k=1}.

\begin{proof}[Proof of Proposition \ref{prop:in-pos-of-2}]
	As $x$ is not in the space spanned by any $X_i$, there is a
	partition of $1,\dots,k$ into two nonempty parts, say $L\sqcup
	M=\{1,\dots,k\}$ such that $x$ is neither in the linear space
	$\mathcal L$ spanned by $X_L=\bigcup_{i\in L} X_i$ nor the
	linear space $\mathcal M$ spanned by $X_M=\bigcup_{i\in M}
	X_i$.

	Let the projection of $x$ onto the spaces $\mathcal L$ and
	$\mathcal M$ be $x_{\mathcal L}$ and $x_{\mathcal M}$,
	respectively.

	We show that $x_{\mathcal L}$ is in $X_L$.  Suppose otherwise,
	choose $\dim \mathcal L$ linearly independent points
	$\{y_i\}_i$ of $X_L$ whose positive hull contains $x_{\mathcal
	L}$.  Let $x_{\mathcal L} =\sum_i \lambda_i y_i$, then at
	least two $\lambda_i$ are positive.  Choose $\dim \mathcal M$
	linearly independent points $\{z_j\}_j$ of $X_M$ but their
	positive hull does not contain $x_{\mathcal M}$.  If
	$x_{\mathcal M}=\sum_j \theta_j z_j$, then at least one
	$\theta_j$ is negative.

	In total, they are $n$ linearly independent points, and $x$
	can be uniquely represented as
	\[
		x=x_{\mathcal L}+x_{\mathcal M}=\sum_i \lambda_i y_i +
		\sum_j \theta_j z_j.
	\]

	Since at least two $\lambda_i$ are positive and at least one
	$\theta_j$ is negative, we obtain a bad position of $x$ and
	the $n$ points, a contradiction.  (The readers may want to
	check Section \ref{sec:main-technique}.  The proofs of the
	other propositions also extensively use this method.)

	Therefore, $x_{\mathcal L}$ is just a positive multiple of a
	point in $X_L$.  Exchanging the roles of $\mathcal L$ and
	$\mathcal M$, we obtain the conclusion that $x$ is in the
	positive hull of some two points $x_i\in X_i, x_j\in X_j$ for
	different $X_i, X_j$.
\end{proof}

\begin{proof}[Proof of Proposition \ref{prop:disjoint-or-equal}]
	Suppose there exist $x\in\pos\{x_1,x_2\}$ for $x_1\in X_1,
	x_2\in X_2$ and $y\in\pos\{y_2,y_3\}$ for $y_2\in X_2, y_3\in
	X_3$.

	Let $x=\alpha_1 x_1 + \alpha_2 x_2$ and $y=\beta_2 y_2 +
	\beta_3 y_3$.

	Consider the case $x_2\ne y_2$, the linear relation
	\[
		\frac{1}{\alpha_2} x + \frac{1}{\beta_2} y =
		\frac{\alpha_1}{\alpha_2} x_1 +
		\frac{\beta_3}{\beta_2} y_3 - \sum_{p\in
		X_2\setminus\{x_2,y_2\}} p
	\]
	shows that the $n+1$ points in $\{x,y\}\cup X'_1\cup
	X_2\setminus\{x_2,y_2\}\cup X'_3\cup\dots\cup X'_k$ for any
	$X'_i\subset X_i$ satisfying $|X'_i|=|X_i|-1$, $x_1\in X'_1$,
	$y_3\in X'_3$ are in a bad position.

	Consider the case $x_2=y_2$, the linear relation
	\[
		\frac{1}{\alpha_2} x - \frac{1}{\beta_2} y =
		\frac{\alpha_1}{\alpha_2} x_1 -
		\frac{\beta_3}{\beta_2} y_3
	\]
	shows that the $n+1$ points in $\{x,y\}\cup X'_1\cup X''_2\cup
	X'_3\cup\dots\cup X'_k$ for any $X'_i\subset X_i$ satisfying
	$|X'_i|=|X_i|-1$, $x_1\in X'_1$, $y_3\in X'_3$, $X''_2\subset
	X_2$, $|X''_2|=|X_2|-2$, $x_2\notin X''_2$ are in a bad
	position.

	The conclusions follows from the bad positions in the cases
	covered.
\end{proof}

\begin{proof}[Proof of Proposition \ref{prop:same-region-same-point}]
	This proof is quite similar to the proof of Proposition
	\ref{prop:disjoint-or-equal}.

	Suppose $i=1,j=2$, let $x=\alpha_1 x_1 + \alpha_2 x_2$,
	$y=\beta_1 y_1 + \beta_2 y_2$.

	Assume the conclusion does not hold, that is $x_1\ne y_1$
	while $|X_1|>2$.

	Consider the case $x_2\ne y_2$, the linear relation
	\[
		\frac{1}{\alpha_2} x + \frac{1}{\beta_2} y =
		\frac{\alpha_1}{\alpha_2} x_1 +
		\frac{\beta_1}{\beta_2} y_1 - \sum_{p\in
		X_2\setminus\{x_2,y_2\}} p
	\]
	shows that the $n+1$ points in $\{x,y\}\cup X'_1\cup
	X_2\setminus\{x_2,y_2\}\cup X'_3\cup\dots\cup X'_k$ for any
	$X'_i\subset X_i$ satisfying $|X'_i|=|X_i|-1$, $x_1,y_1\in
	X'_1$ are in a bad position.

	Consider the case $x_2=y_2$, the linear relation
	\[
		\frac{1}{\alpha_2} x - \frac{1}{\beta_2} y =
		\frac{\alpha_1}{\alpha_2} x_1 -
		\frac{\beta_1}{\beta_2} y_1
	\]
	shows that the $n+1$ points in $\{x,y\}\cup X'_1\cup X''_2\cup
	X'_3\cup\dots\cup X'_k$ for any $X'_i\subset X_i$ satisfying
	$|X'_i|=|X_i|-1$, $x_1,y_1\in X'_1$, $X''_2\subset X_2$,
	$|X''_2|=|X_2|-2$, $x_2\notin X''_2$ are in a bad position.

	The bad positions imply that $x_1=y_1$.
\end{proof}

\begin{proof}[Proof of Proposition \ref{prop:restrict-support}]
	Consider a point $y\notin X_i$ in the space spanned by $X_i$.
	Let $y=\sum_{e_t\in S_y}\lambda_t e_t$.

	Assume $x_i\in S_y$, we have
	\[
		x_i=\frac{1}{\lambda_i} y - \sum_{e_t\in
		S_y\setminus\{x_i\}} \frac{\lambda_t}{\lambda_i} e_t.
	\]

	Choose the $k$ subsets $X'_t\subset X_t$ for $t=1,\dots,k$ so
	that $|X'_t|=|X_t|-1$, $S_y\subseteq X'_i$ and $x_j\in X'_j$.
	Since the points in $X'_i\cup \{y\}\setminus\{x_i\}$ are
	linearly independent, the following relation
	\[
		x= \alpha(\frac{1}{\lambda_i} y - \sum_{e_t\in
		S_y\setminus\{x_i\}} \frac{\lambda_t}{\lambda_i} e_t)
		+ \beta x_j
	\]
	for $x=\alpha x_i + \beta x_j$ shows a bad position for the
	$n+1$ points $\{x,y\}\cup (X'_1\cup\dots\cup
	X'_k)\setminus\{x_i\}$, contradiction.

	Therefore, $x_i\notin S_y$.
\end{proof}

\begin{proof}[Proof of Proposition \ref{prop:all-equal-n-positive}]
	Let $x=\alpha x_i + \beta x_j$.

	Assume there exists a point $y$ in the span of $X_i$ but not
	being a positive multiple of $\sum_{e_t\in S_y} e_t$.  Let the
	support of $y$ be $S_y=\{e_0,\dots,e_m\}\subset X_i$, by
	Proposition \ref{prop:almost-homo}, with the strange index
	supposed to be $e_0$, we have $y=\theta_0 e_0 + \sum_{t=1}^m
	e_t$ for $\theta_0>1$.

	By Proposition \ref{prop:restrict-support}, $x_i\notin S_y$.
	The linear relation
	\begin{align*}
		y&=(\theta_0-1)e_0 + \sum_{t=0}^m e_t\\
		&=(\theta_0-1)e_0 - \sum_{e_t\in X_i\setminus S_y} e_t
		\\
		&=(\theta_0-1)e_0 - \Bigl(\sum_{e_t\in (X_i\setminus
		S_y)\setminus \{x_i\}} e_t\Bigr) - \frac{1}{\alpha} x +
		\frac{\beta}{\alpha} x_j
	\end{align*}
	shows a bad position for the points $\{x,y\}\cup X'_1\cup
	\dots\cup X'_{i-1}\cup X_i\setminus\{x_i,x_{i'}\}\cup
	X'_{i+1}\cup \dots\cup X'_k$, where $X'_t\subset X_t$ is any
	set with $|X'_t|=|X_t|-1$ and $x_{i'}\in S_y\setminus
	\{e_0\}$, $x_j\in X'_j$.

	The conclusion on the relation between $S_p$ and $S_q$ can be
	deduced from Proposition \ref{prop:intersection} with the
	restriction from Proposition \ref{prop:restrict-support}.  We
	however give a self-contained argument below.

	Assume there exist $p, q$ in the space spanned by $X_i$ such
	that $S_p\cap S_q\ne\emptyset$, $S_p\setminus
	S_q\ne\emptyset$, $S_q\setminus S_p\ne\emptyset$.  Since
	$p=\sum_{e\in S_p} e$, $q=\sum_{e\in S_q} e$, the linear
	relation
	\[
		p-q=\sum_{e\in S_p\setminus S_q} e - \sum_{e\in
		S_q\setminus S_p} e
	\]
	shows a bad position for the $n+1$ points $\{p,q\}\cup
	X'_1\dots \cup X'_{i-1}\cup X_i\setminus\{x_i,x_{i'}\} \cup
	X'_{i+1} \cup \dots \cup X'_k$ for any $x_{i'}\in S_p\cap S_q$
	and any $X'_t\subset X_t$ with $|X'_t|=|X_t|-1$.
\end{proof}

\begin{proof}[Proof of Proposition \ref{prop:almost-homo}]
Assume otherwise, there are three indices of $S_p$, namely $0,1,2$,
such that $1=\lambda_0$, $1 < \lambda_1$, $1 < \lambda_2$ for
$p=\sum_{i=0}^k \lambda_i e_i$ (that is $S_p=\{e_0,\dots,e_k\}$).

	Since $e_0=-\sum_{i=1}^n e_i$, the unique relation
	\[
		p=\sum_{i=0}^k \lambda_i e_i = \sum_{i=1}^k \lambda_i
		e_i - \sum_{i=1}^n e_i = (\lambda_1 - 1)e_1 +
		(\lambda_2 - 1) e_2 + \sum_{i=3}^k (\lambda_i-1) e_i -
		\sum_{i=k+1}^n e_i
	\]
	shows that the $n+1$ points $p,e_1,\dots,e_n$ are in a bad
	position, contradiction.
\end{proof}

\begin{proof}[Proof of Proposition \ref{prop:intersection}]
	Suppose there are $p,q$ with $S_p\cap S_q\ne\emptyset,
	S_p\setminus S_q\ne\emptyset, S_q\setminus S_p\ne\emptyset$.
	Let $p=\sum_{e_i\in S_p} \lambda_i e_i$ and $q=\sum_{e_i\in
	S_q} \theta_i e_i$.

	We first show that $S_p\cup S_q= E$. Assume otherwise,
	consider any $e_j\in S_p\cap S_q$, we have
	\[
		\theta_j p - \lambda_j q = \sum_{e_i\in S_p\setminus
		S_q} \theta_j \lambda_i e_i - \sum_{e_i\in
		S_q\setminus S_p} \lambda_j \theta_i e_i +
		\sum_{e_i\in (S_p\cap S_q)\setminus\{e_j\}} (\theta_j
		\lambda_i - \lambda_j \theta_i)e_i.
	\]
	Since $S_p\cup S_q\ne E$, the point $p$ is not in the linear
	span of $(S_p\cup S_q)\setminus\{e_j\}$, by Lemma
	\ref{lem:base-for-linear-independence}.  The above linear
	relation shows that the $n+1$ points $\{p,q\}\cup (S_p\cup
	S_q)\setminus\{e_j\}$ are in a bad position, contradiction.

	Assume $|S_p\cap S_q|\ge 2$, we show that $|S_p\cap S_q|=n-1$.

	Take any $a,b$ such that $\lambda_a=\max_{i} \lambda_i$ and
	$\theta_b=\max_{i} \theta_i$.

	If $e_a, e_b \in S_p\cap S_q$, then the linear relation
	\[
		p+q=p+q-\sum_{e_i\in E} e_i = \sum_{e_i\in S_p\cap
		S_q} (\lambda_i + \theta_i - 1) e_i
	\]
	shows that the $n+1$ points $\{p,q\}\cup
	E\setminus\{e_i,e_j\}$ for any $e_i\in S_p\setminus S_q$ and
	any $e_j\in S_q\setminus S_p$ are in a bad position,
	contradiction.

	If $e_a\in S_p\cap S_q, e_b\in S_q\setminus S_p$, then the
	relation
	\begin{align*}
		\theta_b p + q&=\theta_b p + q - \theta_b\sum_{e_i\in
		E} e_i \\
		&= \sum_{e_i\in S_p\setminus S_q} \theta_b e_i +
		\sum_{e_i\in S_p\cap S_q} (\theta_b \lambda_i + 1) e_i
		+ \Bigl(\sum_{e_i\in (S_q\setminus
		S_p)\setminus\{e_b\}} e_i\Bigr) + \theta_b e_b -
		\theta_b\sum_{e_i\in E} e_i\\
		&= \sum_{e_i\in S_p\cap S_q} (\theta_b \lambda_i + 1 -
		\theta_b) e_i + \sum_{e_i\in (S_q\setminus
		S_p)\setminus\{e_b\}} (1-\theta_b )e_i
	\end{align*}
	shows that the $n+1$ points $\{p,q\}\cup
	E\setminus\{e_i,e_b\}$ for some $e_i\in S_p\setminus S_q$ are
	in a bad position, contradiction.

	If $e_a\in S_p\setminus S_q, e_b\in S_p\cap S_q$, we have a
	similar argument due to symmetry.

	If $e_a\in S_p\setminus S_q, e_b\in S_q\setminus S_p$, and
	$|S_p\setminus S_q|\ge 2$, then the relation
	\begin{align*}
		\theta_b p + q &= \theta_b \lambda_a e_a +
		\sum_{e_i\in (S_p\setminus S_q)\setminus \{e_a\}}
		\theta_b e_i + \sum_{e_i\in S_p\cap S_q} (\theta_b +
		1) e_i \\
		& \qquad + \Bigl(\sum_{e_i\in (S_q\setminus
		S_p)\setminus\{e_b\}} e_i\Bigr) + \theta_b e_b -
		\theta_b \sum_{e_i\in E} e_i\\
		&= (\theta_b\lambda_a - \theta_b) e_a + \sum_{e_i\in
		S_p\cap S_q} e_i + \sum_{e_i\in (S_q\setminus
		S_p)\setminus\{e_b\}} (1-\theta_b) e_i
	\end{align*}
	shows that the $n+1$ points $\{p,q\}\cup
	E\setminus\{e_i,e_b\}$ for some $e_i\in (S_p\setminus
	S_q)\setminus\{e_a\}$ are in a bad position, contradiction.

	A similar argument applies for the case $e_a\in S_p\setminus
	S_q, e_b\in S_q\setminus S_p$, and $|S_q\setminus S_p|\ge 2$.

	The only remaining possible case is $|S_p\setminus
	S_q|=|S_q\setminus S_p|=1$, that is $|S_p\cap S_q|=n-1$.  The
	conclusion follows.
\end{proof}
\begin{remark} \label{rem:strange-indices-in-iv}
	In Case (iv), if $p,q$ have strange indices, they can be the
	points in $S_p\setminus S_q$ and $S_q\setminus S_p$ only.
	This fact will be used in the proof of Proposition
	\ref{prop:2-strange-indices}.
\end{remark}

\begin{proof}[Proof of Proposition
\ref{prop:same-support-same-strange-index}]
	Assume distinct $e_a,e_b$ are the strange indices of $p,q$
	respectively.  Let $p=\sum_{e_i\in S_p} \lambda_i e_i$ and
	$q=\sum_{e_i\in S_q} \theta_i e_i$.  The linear relation
	\[
		p-q=(\lambda_a-1) e_a - (\theta_b-1) e_b
	\]
	shows that the points in $\{p,q\}\cup E\setminus\{e_i,e_j\}$
	for any $e_i\in S_p\setminus\{e_a,e_b\}$, $e_j\in E\setminus
	S_p$ are in a bad position, contradiction.
\end{proof}

\begin{proof}[Proof of Proposition
\ref{prop:disjoint-supports->at-most-1-has-strange-index}]
	Assume $e_a,e_b$ are the strange indices of $p,q$
	respectively.  Let $p=\sum_{e_i\in S_p} \lambda_i e_i$ and
	$q=\sum_{e_i\in S_q} \theta_i e_i$.  The linear relation
	\[
		p+q=p+q-\sum_{e_i\in E} e_i=(\lambda_a-1)e_a +
		(\theta_b-1)e_b - \sum_{e_i\in E\setminus(S_p\cup
		S_q)} e_i
	\]
	shows that the $n+1$ points in $\{p,q\}\cup
	E\setminus\{e_i,e_j\}$ for any $e_i\in S_p\setminus\{e_a\}$,
	$e_j\in S_q\setminus\{e_b\}$ are in a bad position,
	contradiction.
\end{proof}

\begin{proof}[Proof of Proposition \ref{prop:same-for-father-n-child}]
	Suppose $S_p=\{e_0,\dots,e_k\}$, $S_q=\{e_0,\dots,e_\ell\}$
	with $0<k<\ell<n$.

	Since there is always some $\lambda_i=1$, we assume that
	$\lambda_0=1$.  Consider the relation
	\[
		\lambda_0 q - \theta_0 p = \sum_{i=1}^k
		(\lambda_0\theta_i - \theta_0\lambda_i) e_i +
		\sum_{i=k+1}^\ell \lambda_0\theta_i e_i.
	\]

	We need $\lambda_0\theta_i - \theta_0\lambda_i\ge 0$ for each
	$i=1,\dots,k$, otherwise the $n+1$ points in $\{p,q\}\cup
	E\setminus\{e_0,e_n\}$ would be in a bad position.

	If $\theta_i=1$ for some $1\le i\le k$, then
	$\lambda_0\theta_i-\theta_0\lambda_i\ge 0$ implies
	$\theta_0=1$ and $\lambda_i=1$.

	If $\theta_i>1$ for some $1\le i\le k$, then
	$\lambda_0\theta_i-\theta_0\lambda_i\ge 0$ implies
	$\theta_i\ge\lambda_i$.  (Note that we still have $\theta_0=1$
	in this case as $\theta_i>1$.)

	We have shown that $\theta_i\ge\lambda_i$ for every $e_i\in
	S_p$.  Since there is always some $\theta_i=1$ for $e_i\in
	S_p$, we suppose it is the case for $i=0$, that is
	$\theta_0=\lambda_0=1$.  (Note that we do not need to assume
	$\lambda_0=1$ this time as $\lambda_0\le\theta_0$.)

	Suppose $\theta_i>\lambda_i$ for some $e_i$, say
	$\theta_k>\lambda_k$, we have
	\[
		\lambda_k q - \theta_k p = \sum_{i=0}^{k-1} (\lambda_k
		\theta_i - \theta_k \lambda_i) e_i + \sum_{i=k+1}^\ell
		\lambda_k\theta_k e_i.
	\]

	Since $\theta_0=\lambda_0=1$, it follows that
	$\lambda_k\theta_0 - \theta_k\lambda_0 < 0$.  It means the
	$n+1$ points in $\{p,q\}\cup E\setminus\{e_k,e_n\}$ are in a
	bad position, contradiction.

	Therefore, we obtain the conclusion that $\lambda_i=\theta_i$
	for $e_i\in S_p$.
\end{proof}

\begin{proof}[Proof of Proposition
\ref{prop:support-insertecting-one-point->strange-index}]
	Denote $p=\sum_{e_i\in S_p} \lambda_i e_i$ and $q=\sum_{e_i\in
	S_q} \theta_i e_i$.  Let $e_a\in S_p$, $e_b\in S_q$ so that
	$\lambda_a=\max_{e_i\in S_p} \lambda_i$ and
	$\theta_b=\max_{e_i\in S_q} \theta_i$.  If
	$\lambda_a=\lambda_k$, we set $a=k$.  Likewise, if
	$\theta_b=\theta_k$, we set $b=k$.

	Consider the case $|S_p|\ge 3$, $|S_q|\ge 3$.

	If $a\ne k$ and $b\ne k$, then the relation
	\[
		p+q=p+q-\sum_{e_i\in E} e_i=(\lambda_a-1)e_a +
		(\theta_b - 1)e_b + (\lambda_k+\theta_k-1) e_k
	\]
	shows that the $n+1$ points $\{p,q\}\cup
	E\setminus\{e_i,e_j\}$ for any $e_i\in
	S_p\setminus\{e_a,e_k\}$ and $e_j\in S_q\setminus\{e_b, e_k\}$
	are in a bad position, contradiction.

	If $a=k$ and $b\ne k$, then the relation
	\[
		p+q=p+q-\sum_{e_i\in E} e_i=(\lambda_k+\theta_k-1) e_k
		+ (\theta_b-1)e_b
	\]
	shows that the $n+1$ points $\{p,q\}\cup
	E\setminus\{e_i,e_j\}$ for any $e_i\in S_p\setminus\{e_k\}$
	and $e_j\in S_q\setminus \{e_k,e_b\}$ are in a bad position,
	contradiction.  (Note that this also works for $|S_p|=2$.)

	A similar argument works for the case $a\ne k$ and $b=k$.

	For the case $|S_p|=2$ (hence $|S_q|\ge 3$), we suppose
	$S_p=\{e_0,e_1\}$, $S_q=\{e_1,\dots, e_n\}$ (then $k=1$).

	If $a=1$, then the above linear relation, which also works for
	$|S_p|=2$, shows that $b=1$, otherwise we have same set of
	points in a bad position.

	If $a=0$, suppose $b\ne 1$, then the linear relation
	\[
		\frac{1}{\lambda_0} p + q = \frac{1}{\lambda_0} p + q
		- \sum_{e_i\in E} e_i = \frac{\lambda_1}{\lambda_0}
		e_1 + (\theta_b - 1)e_b
	\]
	shows that the $n+1$ points in $\{p,q\}\cup
	E\setminus\{e_0,e_j\}$ for any $e_j\in
	S_q\setminus\{e_1,e_b\}$ are in a bad position, contradiction.

	All the cases were covered, the conclusion follows.
\end{proof}

\begin{proof}[Proof of Proposition \ref{prop:2-strange-indices}]
	As in Remark \ref{rem:strange-indices-in-iv} after the proof
	of Proposition \ref{prop:intersection}, we have shown that for
	the condition given in Case (iv) of Proposition
	\ref{prop:2-strange-indices}, the strange indices of $p,q$, if
	any, are $e_a,e_b$, respectively.  It means
	$\lambda_i=\theta_i=1$ for $e_i\in S_p\cap S_q$.

	We now prove that $\lambda_a>1$, $\theta_b>1$ and $\lambda_a
	\theta_b \ge \lambda_a + \theta_b$.  Assume otherwise, the
	linear relation
	\begin{align*}
		\theta_b p + \lambda_a q &= \theta_b \lambda_a e_a +
		\lambda_a \theta_b e_b + \sum_{e_i\in S_p\cap S_q}
		(\lambda_a+\theta_b) e_i - \lambda_a \theta_b
		\sum_{e_i\in E} e_i\\
		&= \sum_{e_i\in S_p\cap S_q} (\lambda_a+\theta_b -
		\lambda_a\theta_b) e_i
	\end{align*}
	has the positive coefficient $\lambda_a+\theta_b -
	\lambda_a\theta_b$ for $e_i\in S_p\cap S_q$ (note that
	$|S_p\cap S_q|\ge 2$), a contradiction by the bad position of
	the points in $\{p,q\}\cup E\setminus\{e_a,e_b\}$.
\end{proof}

\begin{proof}[Proof of Proposition
\ref{prop:two-big-supports-intersecting-at-one-point}]
	Assume otherwise, there exist $e_i\in S_r\setminus S_q$ and
	$e_j\in S_r\setminus S_p$.  In fact, $e_i\in S_p\setminus S_q$
	and $e_j\in S_q\setminus S_p$.

	Since $S_r\ne E$, either $S_p$ or $S_q$ is not a subset of
	$S_r$.  Suppose $S_q$ is not a subset of $S_r$.  It follows
	that $|S_r\cap S_q|$ is either $1$ or $n-1$ (by Proposition
	\ref{prop:intersection}).

	Suppose $|S_r\cap S_q|=1$, that is $S_r\cap S_q=\{e_j\}$.  It
	follows that $S_r=E\setminus S_q\cup\{e_j\}$ by Proposition
	\ref{prop:intersection}.  However, we then have $|S_p\cap
	S_r|=|S_p|-1$, which is neither $1$ nor $n-1$, contradicting
	Proposition \ref{prop:intersection}.

	It follows that $|S_r\cap S_q|=n-1$.  However, $|S_r\cap S_q|
	< |S_q| \le n-1$ since $|S_p|\ge 3$, contradiction.
\end{proof}

\begin{proof}[Proof of Proposition
\ref{prop:big-intersection-constrain-others}]
	The proposition is obviously true for $n=2$.  We assume $n\ge
	3$.

	Suppose $S_p=\{e_0,\dots,e_{n-1}\}$, $S_q=\{e_1,\dots,e_n\}$.
	Assume $S_r$ is not a subset of $S_p\cap S_q$, we show that
	$S_r=\{e_0,e_n\}$.

	Let $p=\sum_{e_i\in S_p} \lambda_i e_i$ and $q=\sum_{e_i\in
	S_q} \theta_i e_i$.

	Firstly, $|S_r|<n$.  Assume otherwise, let
	$S_r=E\setminus\{e_k\}$ ($k\ne 0,n$).  Because $S_r\cup S_p=E,
	|S_r\cap S_p|=n-1$ and $S_p\setminus S_r=\{e_k\}$, one obtains
	$\lambda_k>1$ but we already have $\lambda_0>1$ (by
	Proposition \ref{prop:2-strange-indices}), contradiction.

	We continue by showing that $|S_r|$ cannot exceed $2$.  Assume
	otherwise that $|S_r|>2$.  As $S_r$ is not a subset of
	$S_p\cap S_q$, either $e_0$ or $e_n$ is in $S_r$, say $e_0\in
	S_r$.  Hence, $|S_r\cap S_q| = |S_r|-1$.  It follows that
	$1<|S_r\cap S_q|<n-1$, contradicting Proposition
	\ref{prop:intersection}.

	As $|S_r|=2$, if $S_r$ is not a subset of $S_p\cap S_q$, then
	either $S_r=\{e_0,e_k\}$, or $S_r=\{e_k,e_n\}$, or
	$S_r=\{e_0,e_n\}$ for some $k\ne 0,n$.  Assume
	$S_r=\{e_0,e_k\}$, then $S_r\cap S_q=\{e_k\}$, by Proposition
	\ref{prop:support-insertecting-one-point->strange-index} we
	obtain $\theta_k>1$ as the strange index of $q$, but we
	already have $\theta_n>1$, a contradiction.  A similar
	contradiction is raised when $S_r=\{e_k,e_n\}$.  Therefore,
	$S_r=\{e_0,e_n\}$.
\end{proof}

\begin{proof}[Proof of Proposition \ref{prop:small-intersect-big}]
	The proposition is obviously true for $n=2$.  We assume $n\ge
	3$.

	Suppose $S_p=\{e_0,e_1\}$, $S_q=\{e_1,\dots,e_n\}$.  Assume
	$S_r$ is not a subset of $S_q\setminus S_p$, that is $S_r$
	contains either $e_0$ or $e_1$.

	If $|S_r|=2$ then $S_r$ is either $\{e_0,e_k\}$ or
	$\{e_1,e_k\}$ for $k\ne 0,1$.  The intersection $S_r\cap S_p$
	is only a point but their union is not $E$, contradicting
	Proposition \ref{prop:intersection}.

	Assume $2<|S_r|<n$, one obtains $e_0\notin S_r$, as otherwise
	$1<|S_r\cap S_q|=|S_r|-1<n-1$.  Also, $e_1\notin S_r$, as
	otherwise $S_p\cup S_r$ is not $E$, contradiction.  It follows
	that $|S_r|=n$.

	Suppose $S_r=E\setminus\{e_k\}$ for $k\ne 1$.  As $|S_r\cap
	S_q|=n-1$, it follows that $q$ has the strange index $e_k$ by
	Proposition \ref{prop:2-strange-indices}.  However, if $q$ has
	a strange index, the strange index must be $e_1$ by
	Proposition
	\ref{prop:support-insertecting-one-point->strange-index} since
	$|S_p\cap S_q|=1$, contradiction.  Therefore,
	$S_r=E\setminus\{e_1\}=(S_p\setminus S_q)\cup (S_q\setminus
	S_p)$.
\end{proof}

\section*{Acknowledgement}
The author would like to thank Roman Karasev for his encouragement in
characterizing this class of polytopes and his patient reading and
commenting various pieces in early drafts.  Also, the
author is grateful to Rolf Schneider, who has kindly given useful
comments improving the presentation of the introduction.

\bibliographystyle{unsrt}
\bibliography{linear}

\end{document}